\theoremstyle{plain}
\newtheorem{theorem}{Theorem}[section]
\newenvironment{myprop}[1]
  {\innercustomprop}
  {\endinnercustomprop}
\newenvironment{mylem}[1]
  {\innercustomlem}
  {\endinnercustomlem}
\newtheorem{lemma}[theorem]{Lemma}
\newtheorem*{claim}{Claim}
\newtheorem{corollary}[theorem]{Corollary}
\newtheorem{proposition}[theorem]{Proposition}
\theoremstyle{definition}
\newtheorem{definition}[theorem]{Definition}
\theoremstyle{remark}
\newtheorem{remark}[theorem]{Remark}
\newcommand{\C}{\mathbb{C}} 
\newcommand{\Hp}{\mathbb{H}} 
\newcommand{\D}{\mathbb{D}} 
\newcommand{\R}{\mathbb{R}}
\newcommand{\Z}{\mathbb{Z}} 
\newcommand{\N}{\mathbb{N}} 
\newcommand{\reg}{\mathrm{reg}}
\newcommand{\vertiii}[1]{{\left\vert\kern-0.25ex\left\vert\kern-0.25ex\left\vert #1 
\right\vert\kern-0.25ex\right\vert\kern-0.25ex\right\vert}}
\newcommand{\mcalA}{\mathcal{A}} 
\newcommand{\mcalB}{\mathcal{B}}
\newcommand{\mcalN}{\mathcal{N}}
\newcommand{\mcalX}{\mathcal{X}}
\newcommand{\ad}[1]{\di A( #1 )}
\newcommand{\ld}[1]{\abs{\di #1 }}
\newcommand{\adz}{\ad{z}}
\newcommand{\ldx}{\ld{x}}
\newcommand{\ldy}{\ld{y}}
\newcommand{\mbbP}{\mathbb{P}}
\newcommand{\mbbQ}{\mathbb{Q}}
\newcommand{\bfb}{\mathbf{b}}
\newcommand{\bin}{\mcalB_{\text{in}}}
\newcommand{\sbin}{b_{\text{in}}}
\newcommand{\bout}{\mcalB_{\text{out}}}
\newcommand{\inset}[1]{\left \{ #1 \right \}}
\newcommand{\paren}[1]{\left ( #1 \right )} 
\renewcommand{\det}{\mathrm{det}}
\DeclareMathOperator{\cle}{CLE} 
\newcommand{\Pb}[1]{\mathbb{P}\left[#1 \right]} 
\newcommand{\Ef}[2]{\mathbb{E}^{#1} \left[#2 \right]}
\newcommand{\Eb}[2]{\mathbb{E}_{#1} \left[#2 \right]}
\newcommand{\ind}[1]{\mathbf{1}_{#1}}
\newcommand{\ann}[1]{\mathbb{A}_{#1}}
\newcommand{\E}[1]{\mathbb{E}\left[#1 \right]}
\newcommand{\sle}{\mathrm{SLE}}
\newcommand{\slek}{\mathrm{SLE}_\kappa}
\newcommand{\dist}{\mathrm{dist}} 
\newcommand{\di}{\mathrm{d}}
\newcommand{\scal}[1]{\left\langle #1 \right\rangle} 
\newcommand{\normh}[1]{\norm{#1}_{\nabla(D)}}
\newcommand{\normreg}[1]{\norm{#1}_{\nabla(D), \mathrm{reg}}}
\newcommand{\restr}[1]{|_{#1}} 
\newcommand{\inv}[1]{\frac{1}{#1}} 
\newcommand{\sinv}[1]{\sfrac{1}{#1}}
\newcommand{\al}{\alpha} 
\newcommand{\ga}{\gamma} 
\newcommand{\de}{\delta} 
\newcommand{\eps}{\varepsilon} 
\newcommand{\ka}{\kappa} 
\newcommand{\lam}{\lambda}
\renewcommand{\phi}{\varphi}
\newcommand{\cc}{\mathbf{c}}
\title[SLE in multiply connected domains]{$\mathbf{\slek}$ and its partition function in multiply connected domains via the Gaussian Free Field and restriction measures}
\author{JUHAN ARU, PHILÉMON BORDEREAU}
\date{}
\begin{document}

\begin{abstract}
    One way to uniquely define Schramm-Loewner Evolution (SLE) in multiply connected domains is to use the conformal restriction property. This gives an implicit definition of a $\sigma$-finite measure on curves; yet it is in general not clear how to construct such measures nor whether the mass of these measures, called the partition function, is finite.
    
    We provide an explicit construction of such 
    SLEs in multiply connected domains when $\kappa = 4$ using the Gaussian Free Field (GFF). In particular, we show that there is a mixture of laws of level lines of GFFs that satisfies the restriction property, both when the target points of the curve are on the same or on distinct boundary components. This allows us to give an expression for the partition function of $\sle_4$ on multiply connected domains and shows that the partition function is finite, answering a question raised in \cite{lawler_partition_2009,lawler_defining_2011}. 

    In a second part, we provide a second construction of $\slek$ in multiply-connected domains for the whole range $\kappa \in (8/3,4]$: specific, however, to the case of the two target points belonging to the same boundary components. This is inspired by \cite{werner_clekappa_2013} and consists of a mixture of laws on curves obtained by following $\cle_\ka$ loops and restriction hulls attached to parts of the boundary of the domain. In this case as well, we obtain as a corollary the finiteness of the partition function for this type of $\slek$.
\end{abstract}

\maketitle

\section*{Introduction}

Schramm-Loewner Evolution $\slek$, $\kappa \geq 0$ is a one-dimensional family of conformally invariant probability measures on non-crossing curves in simply connected domains of the complex plane \cite{schramm_scaling_2000} that are either proved or conjectured to describe the scaling limit of interfaces of a variety of planar statistical physics models, e.g. \cite{smirnov_conformal_2010, lawlerLARWUST2004, schramm_contour_2009,duminil-copin_conformal_2021, chelkak_convergence_2014, kenyon_boundary_2010, sheffield_exploration_2009}. It also arises in connection with Liouville quantum gravity---see e.g. \cite{gwynne_mating_2023} for a review---and, more relevant to the present work, it is expected to be linked with Conformal Field Theories (CFTs) of central charge $\cc=\cc(\kappa)=(6-\kappa)(3\kappa-8)/(2\kappa)$, see e.g. \cite{peltola_towards_2019} for a review and \cite{baverez_cft_2024, gordina_infinitesimal_2025} for recent advances in this direction.

In his seminal work, Schramm constructed (chordal) $\slek$ in simply connected domains as a solution to Loewner's equation driven by a multiple of Brownian motion $\sqrt{\kappa} B$ and showed via a concise argument that the resulting law is the unique law on simple curves (modulo increasing time-reparametrisation) that satisfies conformal invariance and a domain Markov property \cite{schramm_scaling_2000}. However, neither the construction via Loewner's equation nor the characterisation generalise directly to multiply connected domains as the conformal class changes with the growth of the curve and therefore the remaining slit domain cannot be related to the initial one via conformal transformations. In other words, due to the non-triviality of the moduli space for more general domains, identifying a family of laws on curves indexed by the moduli space requires specifying how these laws between different conformal classes compare. 

On the other hand, variants of $\slek$ have been constructed in doubly connected domains via the annulus Loewner equation using the modulus of the slit domain as the time parametrisation \cite{zhan_stochastic_2004, zhan_reversibility_2008, hagendorf_gaussian_2010, hagendorf_sle_2008}, and in higher connectivity using further generalisations of the Loewner equation \cite{bauer_chordal_2008, bauer_stochastic_2004}. In these latter works, one also needs to keep track of the evolution of the moduli as the curve grows in order to obtain a Markov property: the downside is that the natural conditions do not allow to single out a unique law on the curves, yielding instead a whole family of possible curves as $\slek$. A different approach that does single out a unique law of $\slek$ was suggested by Lawler \cite{lawler_partition_2009} and is described next.

\subsection*{Lawler's definition of \texorpdfstring{$\sle$}{SLE} in multiply connected domains via conformal restriction}

The (conformal) restriction property describes how the law of SLE in a domain $D$, on the event that it remains in a subdomain $D'$, compares to the law of SLE in $D'$.
More precisely, in the simply connected setting, consider $K \subset \overline{\Hp}$ compact with $\dist(K,0)>0$ and $\Hp \setminus K$ (connected and) simply connected. It was shown in \cite{lawler_conformal_2003} that for $\kappa \leq 4$ (the values of $\ka$ for which the resulting curve is a.s. simple \cite{rohde_basic_2005}) the law $\nu_{\Hp \setminus K, \kappa}(0,+\infty)$ of $\slek$ from $0$ to $\infty$ is absolutely continuous with respect to $\nu_{\Hp, \kappa}(0, +\infty)$ and that their Radon-Nikodym derivative is explicitly given by
\begin{equation}\label{eq:restrictionintroHp}
\frac{\di \nu_{\Hp\setminus K, \kappa}(0,+\infty)}{\di \nu_{\Hp, \kappa}(0,+\infty)}(\eta) = \ind{\eta \subset \Hp \setminus K} \abs{\varphi_K'(0)}^{-h} \exp(\frac{\cc}{2} m_{\Hp}(\eta, K)),
\end{equation}
where $\varphi_K:\Hp \setminus K \to \Hp$ is the conformal equivalence chosen such that $\varphi_K(0)=0$ and $\varphi_K(z) \sim z$ as $z \to \infty$, and $h=h_\ka=(6-\kappa)/(2\kappa)$ is a constant that can be related to the weight of a boundary condition changing operator in a CFT \cite{cardy_sle_2005, peltola_towards_2019, dubedat_sle_2015}. The term $m_{\Hp}(\eta, K)$ corresponds to the Brownian measure (see \cite{BrownianLoopSoup} for a definition thereof) of loops in $\Hp$ that intersect $K$ and $\eta$. This restriction property is one of the motivations for seeing the laws $\nu_{D, \kappa}(x,y)$ no longer as probability measures, but rather as measures of varying total mass. For $D$ simply connected and $\partial D$ sufficiently regular at $x$ and $y$ this total mass is defined as \cite{lawler_partition_2009, lawler_defining_2011}
\begin{equation*}
Z_{D, \kappa}(x,y)=
\begin{cases}
    H_{\partial D}(x,y)^h & \text{if } x, y \in D \setminus \inset{\infty}, x \neq y \\
    \abs{\phi_K'(x)}^h & \text{if } y=\infty
\end{cases}
\end{equation*}
where $H_{\partial D}$ denotes the boundary Poisson kernel  and $\phi_K:D \to \Hp$ is the conformal equivalence with $\phi_K(x)=0$ and $\phi_K(z)\sim z$ as $z \to \infty$, so that the term $\abs{\phi_K'(0)}^{-h}$ in Equation~\eqref{eq:restrictionintroHp} disappears. This total mass is called the partition function of the chordal $\slek$, in general defined up to an arbitrary multiplicative constant, but here fixed by the definition above since $Z_{\Hp, \kappa}(0,+\infty)=1$ for all $\kappa >0$. The measures $\nu$ then satisfy a form of conformal covariance
\begin{equation}\label{eq:confcovintro}
\varphi_\ast \nu_{D, \kappa}(x,y) = \abs{\varphi'(x)}^h \abs{\varphi'(y)}^h \nu_{\varphi(D), \kappa}(\varphi(x), \varphi(y))
\end{equation}
for distinct $x, y \in \partial D$ and $\varphi:D \to \varphi(D)$ a conformal equivalence (that can be extended to the boundary under some assumptions that we detail later on), suggesting that these partition functions may be also interpreted as operators in a Conformal Field Theory \cite{dubedat_sle_2015} satisfying BPZ equations.

Lawler suggested that the restriction property \eqref{eq:restrictionintroHp} could be used to implicitly but uniquely define $\sle$ also on multiply connected domains $D$ as follows (see Definition \ref{def:sle} for more precision):
\begin{definition}[Conformal restriction $\slek$ in multiply-connected domains]\label{def:sleintro}
For $0 < \kappa \leq 4$, \emph{chordal $\slek$} is the family of laws $\inset{\nu_{D, \kappa}(x,y)}_{(D,x,y)}$, where $D \subset \C$ is a finitely-connected domain with smooth boundary at distinct $x, y \in \partial D$, such that 
\begin{itemize} 
\item conformal covariance is satisfied as in Equation~\eqref{eq:confcovintro}
\item the restriction of $\nu_{D, \kappa}(x,y)$ to curves remaining in a simply connected domain $D' \subset D$ 
is absolutely continuous with respect to the $\slek$ measure $\nu_{D', \kappa}(x,y)$ in $D'$, with Radon-Nikodym derivative 
\begin{equation}\label{eq:restrictionintro}
\frac{\di \nu_{D, \kappa}(x,y)\restr{D'}}{\di \nu_{D', \kappa}(x,y)}(\eta) = \ind{\eta \subset D'} \exp(-\frac{\cc}{2} m_{D}(\eta, D \setminus D')).
\end{equation}
\end{itemize}
\end{definition}
Because these Radon-Nikodym derivatives are consistent, one can see (for instance via a Carathéodory extension argument) that Equation~\eqref{eq:restrictionintro} indeed implicitly defines a ($\sigma$-finite) measure on curves on any finitely connected domain of the complex plane, since the additivity structure of the Brownian loop measure makes the restriction to different domains compatible \cite{lawler_defining_2011}. Moreover, these measures are unique up to a multiplicative constant fixed by requiring that $Z_\Hp(0,+\infty)=1$, and satisfy the domain Markov property that is expected of SLE, as well as a form of reversibility. This however raises a natural question: are the partition functions, i.e. the total masses $Z_{D, \kappa}(x,y):= \abs{\nu_{D,\ka}(x,y)}$ positive and finite for general multiply-connected domains $D$? If this were the case, the measures could be normalised and the resulting probability measures would be conformally invariant.

Beyond the simply-connected case, it is shown in \cite{lawler_defining_2011} that $Z_{D, \kappa}(x,y)$ is indeed finite either when $\kappa \leq 8/3$ (corresponding to the regime $\cc \leq 0$) or when $\kappa\leq 4$ and $D$ is a conformal annulus. The main aim of this paper is to give explicit constructions of the laws satisfying the definition above, and to use those to prove that the partition function is finite for more general finitely connected domains. 

Our results are based on two other ways of constructing $\sle$ in simply-connected domains, which we will describe next. We will comment on further directions at the end of the introduction.

\subsection*{Two other constructions of \texorpdfstring{$\sle$}{SLE} in simply connected domains}
In the case of simply connected domains, $\sle$ curves can be seen to appear from other conformally invariant objects. In particular, the two following connections are instrumental for the present article:
\begin{itemize}
    \item \textit{As a level line of the Gaussian free field.}
    The Gaussian Free Field (GFF) in a proper domain of the complex plane corresponds to a centred Gaussian process with covariance operator formally given by the Green's function. Its connection with $\sle$ was discovered in the seminal paper \cite{schramm_contour_2009}, where it was proved that $\sle_4$ arises as a scaling limit of a certain level-line of the discrete GFF, and in \cite{schramm_contour_2013} (see also \cite{dubedat_sle_2009}) it was shown how to define the level line purely in the continuum. More precisely, a curve $\eta \sim \nu_{D, 4}(x,y)$ in a simply-connected domain $D$ with distinct $x, y \in \partial D$ can be coupled with a Gaussian Free Field $\Phi_D$ with mean corresponding to the harmonic extension of the boundary values $- \lambda:=-\sqrt{\pi/8}$ (resp. $+\lambda$) on the counterclockwise boundary arc $(yx)$ (resp. $(xy)$), so that conditionally on $\eta$, $\Phi_D$ is equal in law to a sum of two independent GFFs on the left (resp. on the right) of $D \setminus \eta$, with constant mean equal to $-\lam$ (resp. $+\lam$). Importantly, $\eta$ is then also a measurable function of $\Phi_D$. Further works explore variations of this results for other types of SLEs \cite{powell_level_2017, hagendorf_gaussian_2010, izyurov_hadamards_2013, lupu_level_2024}, and prove the existence of level-lines for GFFs in multiply connected domains \cite{aru_bounded-type_2019}. \footnote{In fact, for $\ka < 4$, $\slek$ can be also coupled to the GFF as a certain flow line of the GFF \cite{miller_imaginary_2016, sheffield_conformal_2016}, but for now these constructions have not been adapted to the multiply-connected setting and it is not clear whether some of our computations would generalise even if they were.}
    
    \item \textit{Via restriction measures and CLEs.} The (one-sided) restriction measures and the Conformal Loop Ensemble (CLE) are one-parameter families of measures on hulls (resp. loops) satisfying a form of conformal Markov property. Both can be constructed from Brownian motion: for $D \subset \C$ a bounded domain with distinct $x, y \in \partial D$, the filling (relatively to $(xy)$) of a Poisson point process of measure proportional (of factor, or intensity $\al$) to a measure of Brownian excursions on the boundary arc $(yx) \subset \partial D$ yields a one-sided restriction measure of parameter $h=h(\al)$ (see Section~\ref{subsec:restriction} and \cite{lawler_conformal_2003}), whereas the collection of outer boundaries of clusters of loops generated by a Poisson point process of intensity $c$ with respect to the Brownian loop measure corresponds to a $\cle_\ka$ with $\kappa=\kappa(c)$ (see Section~\ref{subsec:cle} and \cite{sheffieldConformalLoopEnsembles2012}). In \cite{werner_clekappa_2013}, it was shown that for $D \subset \C$ a simply connected domain with distinct $x, y \in \partial D$ and $\kappa \in (8/3,4]$, if one samples independently a restriction measure on $(yx)$ of parameter $h_\kappa=(6-\ka)/(2\ka)$ and a $\cle$ of parameter $\ka$, and then proceeds to follow the outer boundary of the restriction sample along with the $\cle$ loops it intersects, then the curve obtained is distributed like $\sle_\ka$ in $D$.
\end{itemize}

\subsection*{Summary of results} 

In this article, we extend these connections to the case of multiply connected domains in a way that provides an explicit construction of the $\slek$ described by Definition \ref{def:sleintro}. We sometimes refer to this measure as \textit{conformal restriction SLE} to emphasise that a priori different properties might be used to define $\slek$ in multiply connected domains and they might give rise to different measures. 

In the first part, we provide a construction specific to $\sle_4$ that relies on the connection with the GFF, both when the two target points (i.e. start and end-point of SLE) belong to the same boundary component (\emph{non-crossing case}) or different boundary components (\emph{crossing-case}). Informally, the results for $\sle_4$ can be stated as follows:
\begin{itemize}
\item[$\star$] The law of conformal restriction $\sle_4$ in any multiply connected domain can be obtained from a mixture of level-lines of the GFF via appropriate topological conditionings. (See Theorem~\ref{thm:noncrossing},~\ref{thm:crossing})
\item[$\star$] The partition function of conformal restriction $\sle_4$ can be identified, and is finite and positive. (See Corollary~\ref{cor:partitionfunctionnoncrossing4},~\ref{cor:partitionfunctioncrossing4})
\end{itemize}

Somewhat surprisingly, except in the crossing-case in the annulus where it can be interpreted as a level line of a compactified GFF, the conformal restriction $\sle_4$ cannot in general be seen as a level line of a single GFF, but is rather given as a mixture of conditioned level lines. For instance in the non-crossing case, a simple curve connecting distinct $x, y \in \partial D$ lying on the same boundary component partitions the other boundary components into $\Upsilon_l \cup \Upsilon_r$ where $\Upsilon_l$ is the set of those on its left and $\Upsilon_r$ the set of those on its right. The mixture of laws yielding $\sle_4$ can then be understood as a mixture of certain conditional laws: for each such partition, we sample a level-line for a GFF whose boundary value is $-\lam$ on $\Upsilon_l$, $+\lam$ on $\Upsilon_r$, and finally \textit{condition} this level-line to indeed partition the boundary components into exactly $\Upsilon_l \cup \Upsilon_r$. Such a mixture cannot be obtained by just choosing appropriate boundary conditions upfront, see Remark~\ref{rem:levelline}.

Except for topological discrepancies, the proofs for both cases then follow the same strategy: the law of the level-line $\eta$ of a GFF $\Phi_D$ in a domain $D$ with suitable boundary conditions conditioned to remain in a smaller domain $D' \subset D$ can be seen as a level-line for a GFF in $D'$ with random boundary conditions given by the values $\xi$ of $\Phi_D$ on $\partial D'$. We first compare this law to $\nu_{D',4}(x,y)$ for $\xi$ fixed in Propositions~\ref{prop:computderiv} and \ref{prop:computderivcrossing}. This builds on the techniques developed in \cite{aru_first_2020, aru_extremal_2022} to compare the laws of local sets of GFFs under changes of boundary conditions: Proposition~\ref{prop:computderiv} can be seen as a variant of Proposition 3.3 in \cite{aru_extremal_2022} for more general boundary conditions, but specific to SLEs.
In a second step, we compute the average of this Radon-Nikodym derivative over $\xi$, conditionally on $\eta \subset D'$. This is a Gaussian integration, and its result consists of a determinant term corresponding to a mass of loops by Proposition 2.2 in \cite{dubedat_sle_2009}, and (an exponential of) a Dirichlet energy which we identify as the partition function of the SLE. Equivalently, it can be seen as the inverse extremal distance between parts of the boundary of the domain. \\

In a second part, we provide another construction of $\sle_\ka$, for the whole range $\ka \in (8/3,4]$, but specific to the non-crossing case:
\begin{itemize}
\item[$\star$] In any multiply-connected domain and for any $\ka \in (8/3, 4]$ we can construct the conformal restriction $\slek$ with the endpoints on the same boundary components as a mixture of laws obtained from Brownian excursions and CLE via topological conditioning. (See Theorem~\ref{thm:noncrossingslekappa})
\item[$\star$] In the setup just described, one can give an expression for the partition function and show that it is finite. (See Corollary~\ref{cor:partitionfunctionnoncrossingkappa})
\end{itemize}
More precisely, conditionally on the boundary components on the left of the curve, one samples a Poisson point process of Brownian excursions on these boundary components plus the arc $(yx)$ and an independent $\cle$, and follows the right boundary together with the $\cle$ loops encountered along the way. To verify that the restriction property \eqref{eq:restrictionintro} is satisfied, it is sufficient to compute the Radon-Nikodym derivative of this law against that in a domain $D \setminus K$, $K$ a hull at positive distance from $(yx)$: the contribution from the $\cle$ provides the term $\exp(\cc \cdot m_D(\eta, K)/2)$, whereas the one for the Brownian excursions yields a normalising constant that we identify as the partition function of the $\sle$ (in the simply connected case, this term would be $H_{\partial D}(x,y)^h$). We were however not able to extend this construction to the crossing case, which appears to be more delicate - it is not clear how to use the geometric construction via Brownian excursions and CLE to define a crossing $\slek$, even in the simplest case of $\sle_4$.\\

\subsection*{Further constructions and connections}
We now comment on related work concerning SLEs in non-simply connected domains and their potential connections to lattice models.

First, parallel to Lawler's work \cite{lawler_defining_2011} showing finiteness of the partition function on annuli, Zhan \cite{zhan_stochastic_2004, zhan_reversibility_2008} introduces a family of processes $\sle(\kappa, \Lambda_{\kappa; \scal{s}})$ as the unique law on curves in annuli satisfying the domain Markov property and reversibility, and shows \cite{zhan_restriction_2012} that conformal restriction is also satisfied under a setting slightly different from the above. In fact, it is argued in \cite{lawler_defining_2011} that both constructions coincide when the laws are normalised. \cite{izyurov_hadamards_2013} also introduce a variety of SLE curves in the annulus as level-lines of GFFs with various boundary conditions and describe their driving function relative to the annulus Loewner equation, but the conformal restriction properties are not discussed.

Second, the restriction property also appears in the study of discrete models conjectured to converge to $\sle$ in general domains. An example of such a model is the $\lam-$Self Avoiding Walks (SAW), introduced in \cite{kozdron_configurational_2006}. Already in the discrete, it satisfies a certain random-walk related restriction property and this property should give rise to the the restriction property described above in the scaling limit. See \cite{lawler_defining_2011} for conjectures related to the $\lam-$SAW and $\slek$, which should also hold in the non-simply-connected setting. The case of the more standard lattice models is less clear---for example it not obvious which Ising interface, or with which boundary conditions, would converge either to the crossing or non-crossing annulus $\sle_3$ defined above. We find it is an interesting question to figure it out.

Finally, there is yet a third fruitful way via Liouville quantum gravity and welding of random surfaces \cite{sun_sle_2023} that we will not treat in this current paper (and that is a priori harder to use for the case $\sle_4$ that is the main topic here).

\subsection*{Structure of the article}
The article is structured as follows. Section~\ref{sec:preliminaries} reviews standard background on complex analysis tools and $\sle$. We move the reminders about the GFF, resp. the restriction measures and $\cle$ to the second and third sections where these tools appear specifically. In Section~\ref{sec:sle4}, we construct $\sle_4$ first in the non-crossing case then in the crossing case, from the GFF.
Section~\ref{sec:slek} provides another construction of the $\sle_\kappa$ measure for $\ka \in (8/3,4]$  via the Brownian excursions and $\cle$, but specific to the non-crossing setting.

\subsection*{Acknowledgements} Both authors are supported by Eccellenza grant 194648 of the Swiss National Science Foundation and are members of NCCR Swissmap. The authors thank Baptiste Cerclé and anonymous referees for helpful comments.

\section{Preliminaries}\label{sec:preliminaries}
\subsection{Notations}\label{subsec:notations}

We establish notation and definitions that will be used throughout this text:
\begin{itemize}
    \item $\D$ denotes the open unit disk in $\C$, $\Hp$ the open upper half plane, and for $p>0$, $\ann{p}$ denotes the annulus $\ann{p}:= \inset{z \in \C: e^{-p} < \abs{z} <1}$.
    A \emph{domain} $D$ is a non-empty proper open subset of $\C$.

    \item We call a simple curve in a domain $D$ an injective and continuous map $\eta:[0,t_\eta] \to D$ for some $0 < t_\eta \leq + \infty$, and write $\eta_t:= \eta([0,t])$ for $t \in [0,t_\eta]$. We in fact allow curves to have endpoints on $\partial D$, i.e. that $\eta(0),\eta(t_\eta) \in \partial D$, write $\eta \subset D$ if $\eta((0,t_\eta)) \subset D$, and alternatively call the trace of those curves crosscuts from $\eta(0)$ to $\eta(t_\eta)$ in $D$. When unspecified, simple curves (that we sometimes also call Jordan curves) are understood modulo increasing time-reparametrisation.
    If $\eta(0)$ and $\eta(t_\eta)$ belong to the same boundary component, it separates $D$ in two parts we call $D_l$ or $(D\setminus\eta)_{\text{left}}$ (resp. $D_r$ or $(D\setminus\eta)_{\text{right}}$) the left (resp. right) connected component of $D \setminus \eta$ relatively to the orientation of $\eta$.
    
    \item We call a \emph{(non-crossing) admissible domain} a triplet $(D,x,y)$ where $D \subset \D$ is a connected domain whose boundary $\partial D$ can be written $\mcalB \cup \bigcup_{j=1}^n \mcalB_j$, $n\geq 0$, with $\mcalB$ and each $\mcalB_j$ given by a closed Jordan curve, and $x,y \in \mcalB \cap \partial \D$. We further require that $\mcalB$ agrees with $\partial \D$ in a neighbourhood of $x$ and $y$. We denote by $(xy)$ or $\mcalB_r$ (resp. $(yx)$ or $\mcalB_l$) the counterclockwise arc from $x$ to $y$ (resp. from $y$ to $x$) in $\mcalB$ (see Figure~\ref{fig:definitions} for a sketch).

    On the other hand, we call a \emph{(crossing) admissible domain} the data $(D,x,y)$ where $D \subset \D$ is a finitely connected domain $D\subset \D$ with $\partial D = \bin \cup \bout \cup \bigcup_{j=1}^n \mcalB_j$, $n \geq 0$, with each set in the union corresponding to a closed Jordan curve, together with $x \in \partial \D$ and $y \in \D$. We also require that $x \in \bout$ with $\bout$ and $\partial \D$ agreeing locally around $x$, and that $y \in \bin$ is $\partial D$-smooth, i.e. that there is a neighbourhood $N$ of $y$ in $\D$ and a smooth map $\phi : N \to \C$ with $\phi(y)=0$ and $\phi(D \cap N) = \Hp \cap \phi(N)$. Both terminologies are meant to highlight when $x$ and $y$ belong to the same or different boundary components.

    The requirement that boundary components correspond to Jordan curves is to avoid punctured domains. Furthermore, note that by Koebe's theorem every finitely connected domain is conformally equivalent to a circle domain of the unit disk, i.e. a domain of the form $\D \setminus C$, $C$ a finite union of closed disjoint disks in $\D$: in particular we will often work with admissible domains having smooth boundary, because the $\sle$ measures that we construct can be transported by conformal equivalences. Importantly, these conformal equivalences can be extended to the boundary under this regularity assumption.
    \item Similarly, a domain $D'$ is called a \emph{test domain} (relatively to $(D,x,y)$ an admissible domain, crossing or non-crossing) if $D' \subset D$, $D'$ is simply-connected with $x,y \in \partial D'=:\mcalB'$ and $D'$ agrees with $D$ in a neighbourhood of $x$ and $y$. We denote by $\mcalB_l'$ (resp. $\mcalB_r'$) the part of the counterclockwise arc from $y$ to $x$ (resp. from $x$ to $y$) in $\mcalB'$ that lies in $D$ (see again Figure~\ref{fig:definitions}). We will typically consider smooth test domains, i.e. test domains $D'$ for which all boundary points are $\partial D'$-smooth.
    
    \item For $D$ a smooth simply-connected domain and distinct $x, y \in \partial D$, we denote by $g_D$ the harmonic extension of the boundary values $-\lam:= - \sqrt{\pi/8}$ (resp. $+\lam$) on $(yx)$ (resp. on $(xy)$).

    \item For $D$ a domain, we let $(\cdot,\cdot)_{\nabla(D)}$ denote the Dirichlet inner product
    \[
        (f,g)_{\nabla(D)} = \int_D \nabla f \cdot \nabla g
    \]
    for $f,g \in H^1(D)$, the Sobolev space defined as the closure of $(C^{\infty}(D), \norm{\cdot}_{\nabla(D)})$. For $\de \subset D$ a continuous curve, we denote by $(\cdot,\cdot)_{\de}$ the scalar product on $L^2(\de)$, with respect to the arclength measure on $\delta$ induced by the Euclidean metric on $\C$, that we will write $\ld{x}$. We will often abuse notation and write $(f,g)_{\delta}$ when $f$ is a distribution on $\de$ tested against a function $g$ of sufficient regularity.
\end{itemize}

\subsection{Poisson kernels and operators}\label{subsec:poissonkernel}

We review some definitions and identities between Green's function and Poisson kernels. 

\begin{itemize}
    \item The \emph{Green's function} $G_D$ in a domain $D$ at distinct $z,w \in D$ (with Dirichlet boundary conditions) is the density at $w$ of the law of a Brownian motion with speed $2$ started at $z$, killed upon exiting $D$. This is normalised so that $G_\D(0,z) = -(2\pi)^{-1}\log \abs{z}$, consistently with \cite{werner_lecture_2021, berestycki_gaussian_2024}, and so that $G_D$ is the integral kernel of $(-\Delta_D)^{-1}$ without extra multiplicative terms.
    \item Let $z \in D$ and $x$ be $\partial D$-smooth (meaning that there exists a neighbourhood $N$ of $x$ in $D$ and a smooth map $\phi : N \to \C$ with $\phi(x)=0$ and $\phi(D \cap N) = \Hp \cap \phi(N)$). We denote by $H_D(z,x)$ the \emph{Poisson kernel}, defined as the density (w.r.t. arc length) at $x$ of the measure induced by the exit point of $D$ of a Brownian motion started at $z$, i.e. for all $V \in \partial D \cap N$,
    \[
        \Pb{B_{T_D}^z \in V} = \int_V H_D(z,x') \ld{x'},
    \]
    well-defined as $x$ is $\partial D$-smooth. 
    Under this normalisation,
    \[
        H_\Hp(x+iy,0)=\inv{\pi} \frac{y}{x^2+y^2}.
    \]
    The Poisson kernel also corresponds to the normal derivative of the Green's function at the boundary, i.e.
    \[
        H_D (z,w) = \partial_{\nu_w} G_D(z,w),
    \]
    with $\nu_w$ the \textit{inward} normal vector at $w$.
    \item For distinct $x, y$ both $\partial D$-smooth, we let $H_{\partial D}(x,y)$ denote the \textit{boundary} or \textit{excursion Poisson kernel}, defined by
    \[
        H_{\partial D} (x,y) = \partial_{\nu_x} H_D(x,y) = \partial_{\nu_y} H_D(x,y),
    \]
    The boundary Poisson kernel also describes the mass of measures on Brownian excursions from the boundary of planar domains \cite[Section 3.3]{BrownianLoopSoup}: this will be important later on for probabilistic interpretations of operators defined using these kernels.
\end{itemize}

These three functions satisfy the following conformal covariance property: for $\varphi:D \to \varphi(D)$ a conformal equivalence,
\begin{align}
    G_{D}(x,y) &= G_{\varphi(D)}(\varphi(x),\varphi(y)),&&  x,y \in D \nonumber\\
    H_{D}(x,y) &= \abs{\phi'(y)} H_{\varphi(D)}(\varphi(x),\varphi(y)),&& x \in D, y \in \partial D \nonumber\\
    H_{\partial D}(x,y) &= \abs{\phi'(x)}  \abs{\phi'(y)} H_{\partial \varphi(D)}(\varphi(x),\varphi(y)),&& x,y \in \partial D \label{eq:confcovpoisson}
\end{align}
with $x$ and $\varphi(x)$ smooth on $\partial D$ and $\partial \varphi(D)$ whenever $x \in \partial D$, same for $y$, so that the quantities are well-defined.

An important family of measures on unrooted loops in domains of the complex plane are the \emph{Brownian loop measures}
\[
\mu_D^{\text{loop}}\coloneqq\int_{D} \int_{0}^{+\infty} \inv{4 \pi t^2} \mu_D^\#(z,z,t) \di t \adz,
\]
with $\mu_D^\#(z,z,t)$ corresponding to the law of a Brownian bridge from $z$ to $z$ of time duration $t$, on the event that it stays in $D$. These measures satisfy a restriction property as well as conformal invariance \cite[Proposition 6]{BrownianLoopSoup}. They are infinite, but we will make use of the fact that for $K_1,K_2$ two compacts of $D$ such that $\dist(K_1,K_2)>0$, the mass
\[
m_D(K_1,K_2)=\mu_D^{\text{loop}}(\inset{\omega \text{ loop in }D: \omega \cap K_1 \neq \emptyset, \omega \cap K_2 \neq \emptyset})
\]
of loops in $D$ that intersect both $K_1$ and $K_2$ is finite and also a conformal invariant: for $\phi:D \to \phi(D)$ a conformal equivalence, $m_D(K_1,K_2)=m_{\phi(D)}(\phi(K_1),\phi(K_2))$. 

We can also see the Poisson kernel and Green's function as integral operators. For instance, for $\mcalB_1,\mcalB_2$ disjoint smooth crosscuts in a domain $D$ with $\dist(\mcalB_1, \mcalB_2)>0$, define the bounded operator $H_{D \setminus \mcalB_1}^{\mcalB_2}:L^2(\mcalB_1) \to L^2(\mcalB_2)$ as
\[
H_{D \setminus \mcalB_1}^{\mcalB_2} f \coloneqq \int_{\mcalB_1} f(x) H_{D \setminus \mcalB_1}(\cdot,\di x),
\]
where $H_{D \setminus \mcalB_1}$ corresponds to (abusing notation) the Poisson kernel in the connected component of $D \setminus \mcalB_1$ that contains $\mcalB_2$.
In the same way, we can define integral operators $(G_D)_{\mcalB_j}^{\mcalB_k}, \inset{j,k} \subseteq \inset{1,2}$ out of the Green's function and the boundary Poisson kernel for $f \in L^2(\mcalB_j)$ and $y \in \mcalB_k$:
\begin{align*}
((G_D)_{\mcalB_j}^{\mcalB_k}f)(y) &\coloneqq \int_{\mcalB_j} G_D(y,x) f(x) \ldx,\\
((H_{\partial (D \setminus (\mcalB_1 \cup \mcalB_2))})_{\mcalB_j}^{\mcalB_k} f)(y) &\coloneqq \int_{\mcalB_1} H_{\partial (D \setminus (\mcalB_1 \cup \mcalB_2))}(y,x) f(x) \ldx \quad \text{ only if } j \neq k.
\end{align*}
Allowing for $j=k$ in the definition of the boundary Poisson kernel operator above yields an unbounded operator since $H_{\partial (D \setminus (\mcalB_1 \cup \mcalB_2))}(x,y) \asymp (x-y)^{-2}$ as $\abs{x-y} \to 0$. However, the operator
\[
(H_{\partial (D \setminus \mcalB_1)}-H_{\partial (D \setminus (\mcalB_1 \cup \mcalB_2))})_{\mcalB_1}^{\mcalB_1} f \coloneqq \int_{\mcalB_1} (H_{\partial (D \setminus \mcalB_1)}-H_{\partial (D \setminus (\mcalB_1 \cup \mcalB_2))})(\cdot,x) f(x) \ldx
\]
is bounded on $L^2(\mcalB_1)$ and well-defined with no assumption on $\mcalB_2$ other than continuity, with the difference $(H_{\partial (D \setminus \mcalB_1)}-H_{\partial (D \setminus (\mcalB_1 \cup \mcalB_2))})(x,y)$ corresponding to the mass of Brownian excursions from $x$ to $y$ in the connected component of $D \setminus \mcalB_1$ containing $\mcalB_2$ that intersect $\mcalB_2$.
Similarly, $(G_D)_{\mcalB_j}^{\mcalB_k}$ is bounded iff $j \neq k$ since $G_D(x,y) \asymp -\log \abs{x-y}$ as $\abs{x-y} \to 0$. We will often drop the superscripts when the context makes it clear.

We first gather useful identities between these operators.

\begin{lemma}[Some identities for loop measures]\label{lem:idloops}
Let $\mcalB_1, \mcalB_2$ be disjoint smooth simple curves in $D$ and distinct $x, y \in \mcalB_1$. It holds that
    \begin{equation}\label{eq:decompgreen}
    (G_D-G_{D \setminus \mcalB_2})(x,y) = \int_{\mcalB_2} H_{D \setminus \mcalB_2}(x, z) \ld{z} \int_{\mcalB_1} H_{D \setminus \mcalB_1}(z, w) \ld{w} \, G_D(w,y),
    \end{equation}
    \begin{equation}\label{eq:decomppoisson}
    \int_{\mcalB_2} H_{D \setminus \mcalB_2}(x, z) \ld{z} H_{D \setminus \mcalB_1}(z,y) = \int_{\mcalB_1} G_{D \setminus \mcalB_2}(x,w) \ld{w} \, (H_{\partial (D \setminus \mcalB_1)}-H_{\partial (D \setminus (\mcalB_1 \cup \mcalB_2))})(w, y) .
    \end{equation}

    When $\mcalB_2$ is merely assumed continuous, it holds altogether that
    \begin{equation}\label{eq:decompnoassump}
        (G_D-G_{D \setminus \mcalB_2})(x,y) = \iint_{\mcalB_1 \times \mcalB_1} G_{D \setminus \mcalB_2}(x,w) \ld{w} \, (H_{\partial (D \setminus \mcalB_1)}-H_{\partial (D \setminus (\mcalB_1 \cup \mcalB_2))})(w, z) G_{D}(z,y) \ldy
    \end{equation}
\end{lemma}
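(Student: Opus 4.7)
\medskip
\noindent\textbf{Proof plan.} The strategy is to establish \eqref{eq:decompgreen} and \eqref{eq:decomppoisson} for smooth $\mcalB_2$ via Brownian path decompositions, and then to deduce \eqref{eq:decompnoassump} by combining the two in the smooth case and passing to a limit in $\mcalB_2$.

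For \eqref{eq:decompgreen} I would apply the strong Markov property twice. The left-hand side $(G_D - G_{D\setminus\mcalB_2})(x,y)$ is the expected occupation density at $y$ for a Brownian motion started at $x$ and killed on $\partial D$, restricted to trajectories that visit $\mcalB_2$. Decomposing at the first hit of $\mcalB_2$ gives
\[
(G_D - G_{D\setminus\mcalB_2})(x, y) \;=\; \int_{\mcalB_2} H_{D \setminus \mcalB_2}(x, z)\, G_D(z, y)\, \ld{z}.
\]
Since $y \in \mcalB_1$ we have $G_{D\setminus \mcalB_1}(z,y) = 0$, so the strong Markov property at the first hit of $\mcalB_1$ applied to $G_D(z,y) = (G_D - G_{D \setminus \mcalB_1})(z,y)$ yields
\[
G_D(z,y) \;=\; \int_{\mcalB_1} H_{D\setminus \mcalB_1}(z,w)\, G_D(w,y)\, \ld{w},
\]
and substitution gives \eqref{eq:decompgreen}.

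For \eqref{eq:decomppoisson}, the left-hand side represents the density at $y \in \mcalB_1$ of the endpoint of a Brownian trajectory issued from $x$, tracked up to the first hit $z$ of $\mcalB_2$ and then continued in $D \setminus \mcalB_1$ until its first return $y$ to $\mcalB_1$. My plan is to derive the identity by a Revuz-measure/local-time decomposition, splitting the same family of paths instead at the \emph{last} visit $w$ to $\mcalB_1$ before the first hit of $\mcalB_2$: the portion of the path from $x$ up to $w$ contributes the local-time intensity $G_{D \setminus \mcalB_2}(x, w)\, \ld{w}$ on $\mcalB_1$ of Brownian motion in $D \setminus \mcalB_2$ started at $x$, while the remaining segment is a Brownian excursion from $w$ to $y$ in the component of $D \setminus \mcalB_1$ containing $\mcalB_2$ that hits $\mcalB_2$, with mass precisely $(H_{\partial(D \setminus \mcalB_1)} - H_{\partial(D \setminus (\mcalB_1 \cup \mcalB_2))})(w, y)$ as recalled in the text. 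Equivalently, \eqref{eq:decomppoisson} can be obtained by differentiating a Green's function decomposition in the normal direction at $x$ and $y$ on $\mcalB_1$ (from the side containing $\mcalB_2$); the correct identification of the side from which the normal derivatives are taken at the interior curve $\mcalB_1$ is the main point of technical care.

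Finally, for \eqref{eq:decompnoassump} with smooth $\mcalB_2$, I would substitute \eqref{eq:decomppoisson} into the inner $w$-integral of \eqref{eq:decompgreen} and exchange the order of integration (justified by positivity of the kernels), which reproduces the right-hand side of \eqref{eq:decompnoassump}. For $\mcalB_2$ merely continuous, I would approximate it by smooth simple curves $\mcalB_2^{(n)} \to \mcalB_2$ and pass to the limit: the Green's function $G_{D \setminus \mcalB_2^{(n)}}$ converges locally uniformly away from $\mcalB_2$, while the excursion-kernel difference $(H_{\partial(D \setminus \mcalB_1)} - H_{\partial(D \setminus (\mcalB_1 \cup \mcalB_2^{(n)}))})$ is uniformly bounded on $L^2(\mcalB_1)$ and converges thanks to its interpretation as the mass of Brownian excursions from $\mcalB_1$ to itself hitting $\mcalB_2^{(n)}$. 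The main obstacle is really \eqref{eq:decomppoisson}: it trades an integral over $\mcalB_2$ (from first hit) for one over $\mcalB_1$ (from last crossing) and therefore requires a genuinely different decomposition rather than a direct Markov argument; by contrast, the approximation step for \eqref{eq:decompnoassump} is more routine but still requires the stability of the excursion-kernel difference just mentioned.
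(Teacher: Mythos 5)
Your argument is correct and, for \eqref{eq:decompgreen} and \eqref{eq:decomppoisson}, essentially coincides with the paper's: the paper also obtains \eqref{eq:decompgreen} by splitting at the first hit of $\mcalB_2$ and the subsequent first hit of $\mcalB_1$, and its proof of \eqref{eq:decomppoisson} via ``reversibility and the Markov property'' singles out exactly the same intermediate point as your last-exit decomposition --- the last visit to $\mcalB_1$ before the first hit of $\mcalB_2$ --- merely viewed from the time-reversed path; the two phrasings are interchangeable, and your forward last-exit version is, if anything, slightly more self-contained. The genuine divergence is in \eqref{eq:decompnoassump}. You derive it for smooth $\mcalB_2$ by substituting \eqref{eq:decomppoisson} into \eqref{eq:decompgreen} and then approximate a continuous $\mcalB_2$ by smooth crosscuts, whereas the paper gives a direct three-piece decomposition of the forward path (a path from $x$ to the last visit $w\in\mcalB_1$ before touching $\mcalB_2$, staying in $D\setminus\mcalB_2$; an excursion from $w$ to the first return $z\in\mcalB_1$ after touching $\mcalB_2$, which by construction hits $\mcalB_2$ and avoids $\mcalB_1$ in its interior; and an unrestricted path from $z$ to $y$ in $D$), a decomposition that makes sense verbatim for continuous $\mcalB_2$ because no Poisson kernel on $\mcalB_2$ is ever invoked. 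The paper's route thus avoids the limiting argument entirely; yours also works, but the stability step you flag --- convergence and uniform control of $(H_{\partial(D\setminus\mcalB_1)}-H_{\partial(D\setminus(\mcalB_1\cup\mcalB_2^{(n)}))})$ and of $G_{D\setminus\mcalB_2^{(n)}}$ --- is precisely the kind of work the paper defers to Appendix~\ref{app:dubgeneral}, where the analogous continuity in $\mcalB_2$ is established for the Fredholm determinant; if you keep the approximation route, you should spell out the domination by the mass of excursions hitting a fixed $\eps$-neighbourhood of $\mcalB_2$, exactly as is done there.
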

We might sometimes write more succinctly these equations as
\begin{align*}
    (G_D-G_{D \setminus \mcalB_2})_{\mcalB_1}^{\mcalB_1} &= H_{D \setminus \mcalB_2}^{\mcalB_1} H_{D \setminus \mcalB_1}^{\mcalB_2} (G_D)_{\mcalB_1}^{\mcalB_1}, \\
    H_{D \setminus \mcalB_2}^{\mcalB_1} H_{D \setminus \mcalB_1}^{\mcalB_2} &= (G_{D \setminus \mcalB_2})_{\mcalB_1}^{\mcalB_1} (H_{\partial (D \setminus \mcalB_1)}-H_{\partial (D \setminus (\mcalB_1 \cup \mcalB_2))})_{\mcalB_1}^{\mcalB_1}, \\
    (G_D-G_{D \setminus \mcalB_2})_{\mcalB_1}^{\mcalB_1} &= (G_{D \setminus \mcalB_2})_{\mcalB_1}^{\mcalB_1} (H_{\partial (D \setminus \mcalB_1)}-H_{\partial (D \setminus (\mcalB_1 \cup \mcalB_2))})_{\mcalB_1}^{\mcalB_1} (G_D)_{\mcalB_1}^{\mcalB_1}.
\end{align*}

\begin{proof}
    The left-hand side in Equation~\eqref{eq:decompgreen} corresponds to the mass of Brownian paths from $x$ to $y$ in $D$ that hit $\mcalB_2$. One can decompose such a path at its first hitting time of $\mcalB_2$, and at its following first hitting time of $\mcalB_1$: the mass of the measure disintegrated in this way is exactly the right-hand side.

    Equation~\ref{eq:decomppoisson} follows similarly by the Markov property and reversibility of the Brownian measure: the left-hand side counts Brownian loops started at $y$ that `enter' immediately $D \setminus \mcalB_1$, hit $\mcalB_2$, and come back to a point $x \in \mcalB_1$, which can be decomposed as an excursion from $\mcalB_1$ to $\mcalB_1$ in $D\setminus\mcalB_1$ hitting $\mcalB_2$ and a path to the starting point in $D \setminus \mcalB_2$.

    Finally, Equation~\ref{eq:decompnoassump} follows from both considerations, without any assumption on $\mcalB_2$ other than continuity: a Brownian path from $x$ to $y$ intersecting $\mcalB_2$ can be decomposed into a Brownian path from $x$ to some $w \in \mcalB_1$ in $D \setminus \mcalB_2$, an excursion from $w$ to some $z \in \mcalB_1$ in $D \setminus \mcalB_2$ intersecting $\mcalB_2$, and a Brownian path from $z$ to $y$ in $D$.
\end{proof}
Finally, we need a proposition from \cite{dubedat_sle_2009} that will be relevant in simplifying computations of Gaussian integrations:

\begin{proposition}[Proposition 2.2 in \cite{dubedat_sle_2009}]\label{prop:detloopsoup}
    In the setting above, the following identity holds:
    \[
        \exp(-m_D(\mcalB_1,\mcalB_2)) = \det_F(I_{\mcalB_1}-H_{D \setminus \mcalB_2}^{\mcalB_1} H_{D \setminus \mcalB_1}^{\mcalB_2}),
    \]
    where $\det_F$ stands for the Fredholm determinant. In fact, it is sufficient that $\mcalB_1$ is smooth and $\mcalB_2$ continuous, provided that we understand
    \[
        H_{D \setminus \mcalB_2}^{\mcalB_1} H_{D \setminus \mcalB_1}^{\mcalB_2} := (G_{D \setminus \mcalB_2})_{\mcalB_1}^{\mcalB_1} (H_{\partial (D \setminus \mcalB_1)}-H_{\partial (D \setminus (\mcalB_1 \cup \mcalB_2))})_{\mcalB_1}^{\mcalB_1}.
    \]
\end{proposition}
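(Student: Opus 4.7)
The plan is to reduce the identity to a path-counting argument via the Fredholm series expansion. Taking logarithms, the statement becomes
\[
m_D(\mcalB_1, \mcalB_2) \;=\; -\log \det_F(I - T) \;=\; \sum_{n \geq 1} \frac{\mathrm{tr}(T^n)}{n}, \qquad T := H_{D\setminus\mcalB_2}^{\mcalB_1} H_{D\setminus\mcalB_1}^{\mcalB_2},
\]
so the task is to identify $\mathrm{tr}(T^n)/n$ with the mass of Brownian loops performing exactly $n$ ``macro-excursions'' between $\mcalB_1$ and $\mcalB_2$. To legitimate the series, one must first verify that $T$ is trace-class on $L^2(\mcalB_1)$ and has spectral radius strictly less than $1$: for smooth disjoint $\mcalB_1, \mcalB_2$ at positive distance, trace-classness follows from the smoothness of the two Poisson kernels on their respective off-diagonal regions, and the spectral radius bound from a maximum-principle argument applied to $T \mathbf{1}$ (a Brownian motion started on $\mcalB_1$ fails to return to $\mcalB_1$ via $\mcalB_2$ with positive probability, since it may exit through $\partial D$).

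Next, iterating the definition of $T$ yields
\[
\mathrm{tr}(T^n) = \int \prod_{i=1}^n H_{D\setminus\mcalB_2}(x_i, z_i) H_{D\setminus\mcalB_1}(z_i, x_{i+1}) \prod_{i=1}^n \ld{x_i} \ld{z_i}
\]
with cyclic indexing $x_{n+1} = x_1$, $x_i \in \mcalB_1$ and $z_i \in \mcalB_2$. By the interpretation of the Poisson kernel as a first-hitting density together with the strong Markov property, this integral is the total mass of Brownian trajectories that alternate between $\mcalB_1$ and $\mcalB_2$ with exactly $n$ round trips and return to their starting point on $\mcalB_1$.

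The final step is to relate this to the unrooted loop measure. Every Brownian loop that meets both $\mcalB_1$ and $\mcalB_2$ decomposes canonically, via It\^{o} excursion theory applied to its visits of $\mcalB_1$, into a cyclic sequence of \emph{macro-excursions}, that is, maximal excursions away from $\mcalB_1$ that touch $\mcalB_2$ before returning. A loop with exactly $n$ macro-excursions gives rise to $n$ rooted trajectories of the type counted by $\mathrm{tr}(T^n)$, one per choice of distinguished macro-excursion, so $\mathrm{tr}(T^n) = n \cdot m_D(\{\ell : \ell \text{ has exactly } n \text{ macro-excursions}\})$. Summing over $n \geq 1$ yields $m_D(\mcalB_1, \mcalB_2)$. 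The extension to merely continuous $\mcalB_2$ follows by approximating $\mcalB_2$ by smooth curves $\mcalB_2^k$ and passing to the limit, using the alternative expression from Equation~\eqref{eq:decomppoisson} of Lemma~\ref{lem:idloops} to keep both operators well-defined and norm-convergent even as $\mcalB_2$ loses regularity, while monotone convergence handles the loop-measure side.

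The main obstacle will be the rigorous implementation of the macro-excursion decomposition and the matching of the cyclic factor $n$: Brownian loops almost surely visit $\mcalB_1$ on a perfect, nowhere-isolated set of times, so the ``start of a macro-excursion'' must be defined via It\^{o} excursion theory on $\mcalB_1$, and the conversion between the unrooted loop measure $\mu_D^{\mathrm{loop}}$ and the rooted-trajectory measure counted by $\mathrm{tr}(T^n)$ requires reconciling the $1/t$ time-rooting factor in $\mu_D^{\mathrm{loop}}$ with the purely time-integrated Poisson kernel factors in $\mathrm{tr}(T^n)$. A convenient route is to first establish the identity for discrete Markov-chain analogues, where the cyclic factor becomes transparent through a direct enumeration of closed walks, and then pass to the Brownian limit by standard approximation of the heat kernels.
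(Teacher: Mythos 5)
Your proposal and the paper's proof address different halves of the statement. The paper takes the smooth-case identity for granted --- it is Proposition 2.2 of \cite{dubedat_sle_2009}, which is simply cited --- and devotes its entire proof to the extension to merely continuous $\mcalB_2$, which is the only new content here. You instead spend almost all of your effort re-deriving the smooth-case identity via the Fredholm expansion $-\log\det_F(I-T)=\sum_{n\ge1}\Tr(T^n)/n$ and a macro-excursion decomposition of loops. That strategy is indeed the standard route to the identity (it is essentially how Dub\'edat proves it), and your outline is sound: the stopping times ``first hit of $\mcalB_2$, then first return to $\mcalB_1$'' are well defined and a.s.\ finite in number since the curves are at positive distance, so the perfect-visit-set worry you raise is not actually an issue. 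But the step you yourself flag as the main obstacle --- reconciling the $1/(4\pi t^2)$ time-rooting in $\mu_D^{\text{loop}}$ with the time-integrated Poisson kernels in $\Tr(T^n)$, and extracting the cyclic factor $n$ --- is precisely the content of the cited result and is left unexecuted; as written, this part is a plan rather than a proof.

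Conversely, the part the paper actually proves is dispatched in your last two sentences, and there the argument is too thin. Continuity of $A\mapsto\det_F(I-A)$ holds in the \emph{trace-norm} topology, so one must show that $(G_{D\setminus\mcalB_2^k})_{\mcalB_1}^{\mcalB_1}(H_{\partial(D\setminus\mcalB_1)}-H_{\partial(D\setminus(\mcalB_1\cup\mcalB_2^k))})_{\mcalB_1}^{\mcalB_1}$ converges to its limit as a trace-class operator on $L^2(\mcalB_1)$, not merely that the kernels converge pointwise or in operator norm; the paper does this by writing the kernel explicitly as the density of Brownian paths from $x$ that hit $\mcalB_2^k$ and then return to $\mcalB_1$ at $z$, dominating it uniformly by the corresponding density for an $\eps$-neighbourhood of the limit curve, and applying dominated convergence. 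Also, ``monotone convergence handles the loop-measure side'' is not right: a Hausdorff-approximating sequence of smooth crosscuts need not be nested relative to $\mcalB_2$, so $m_D(\mcalB_1,\mcalB_2^k)$ is not monotone in $k$; dominated convergence is what is needed there as well. If you supply the trace-norm convergence and correct the convergence mode on the loop side, your limiting argument matches the paper's; the smooth-case derivation should either be carried out in full or, more economically, replaced by the citation.
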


$I_{\mcalB_1}$ stands here for the identity operator on $L^2(\mcalB_1)$. We keep the notation $1_{\mcalB_1}$ for the constant function in $L^2(\mcalB_1)$ equal to one. In \cite{dubedat_sle_2009}, the result is stated for $\mcalB_2$ smooth, but we explain in Appendix \ref{app:dubgeneral} how the result follows when $\mcalB_2$ is merely continuous.


\subsection{Dirichlet energies}\label{subsec:dirichlet}

We will often consider diverging Dirichlet energies of harmonic functions that are locally constant on the boundary of a domain. Consider $D$ a smooth bounded simply connected domain again with distinct $x, y \in \partial D$ and boundary $\mcalB_l \cup \mcalB_r=(yx) \cup (xy)$ as in the notation section. Let $u_D$ denote the harmonic extension of the boundary values $-1$ on $\mcalB_l$, $+1$ on $\mcalB_r$. The Dirichlet energy $\norm{u_D}_{\nabla(D)} := \norm{\nabla u_D}_D$ diverges, but we can regularise it at the discontinuity points of $u_D$:
\[
\normreg{u_D}^2 := \lim_{\eps_1 \to 0, \eps_2 \to 0} \int_{z \in D, \abs{z-x}>\eps_1, \abs{z-y}>\eps_2} \abs{\nabla u_D}^2 \adz + \frac{4}{\pi} (\log\eps_1 + \log\eps_2).
\]
It is easy to see that the limit exists. Furthermore, it can be verified (see for instance \cite{dubedat_sle_2009}, proof of Proposition 5.2) that the regularised Dirichlet energy satisfies the following conformal covariance property, for $D$ a smooth bounded simply connected domain $\phi:D \to \phi(D)$ a bounded conformal equivalence:
\[
\exp(-\inv{2}\normreg{u_D}^2) = \abs{\phi'(x)}^{2/\pi}\abs{\phi'(y)}^{2/\pi}\exp(-\inv{2}\norm{u_{\phi(D)}}_{\nabla(\phi(D)), \reg}^2),
\]
so that up to a multiplicative constant,
\begin{equation}\label{eq:dubproof}
\exp(\normreg{u_D}^2)=H_{\partial D}(x,y)^{-4/\pi}.
\end{equation}

Nonetheless, in many relevant situations below, we will consider differences of such diverging Dirichlet energies to which it is possible to provide a meaning without renormalisation. Consider in the above set-up $D' \subset D$ another test domain such that $\dist(\mcalB_l, D \setminus D')>0$. One might formally write
\begin{align*}
\int_D \abs{\nabla u_D}^2 &= 2\int_{\mcalB_l} \partial_\nu u_D, \\
\int_{D'} \abs{\nabla u_{D'}}^2 &= 2\int_{\mcalB_l} \partial_\nu u_{D'},
\end{align*}
with $\partial_\nu$ the inwards normal derivative, and define
\[
\norm{u_{D'}}_{\nabla(D')}^2 - \norm{u_{D}}_{\nabla(D)}^2 := 2\int_{\mcalB_l} \partial_\nu (u_{D'}-u_D)
\]
where the normal derivative is well-defined and the integral is finite since $u_{D'}-u_D$ is harmonic in $D'$ with boundary values $0$ on $\mcalB_l$ and $1-u_D \geq 0$ on $\partial D' \setminus \mcalB_l$. This is of course equal to the difference of the regularised Dirichlet energies. One can also replace $u_D, u_{D'}$ by their expression involving the Poisson kernel:
\[
u_D(\cdot)=1 - 2 \int_{\mcalB_l} H_D(\cdot, y) \ldy, \quad u_{D'}(\cdot)=1 - 2 \int_{\mcalB_l} H_{D'}(\cdot, y) \ldy
\]
so that the definition above becomes
\begin{align*}
\norm{u_{D'}}_{\nabla(D')}^2 - \norm{u_{D}}_{\nabla(D)}^2 &:= 4\iint_{\mcalB_l \times \mcalB_l}(H_{\partial D} - H_{\partial D'})(z,w) \ld{z} \ld{w} \\
&= 4(1_{\mcalB_l}, (H_{\partial D} - H_{\partial D'}) 1_{\mcalB_l})_{\mcalB_l}.
\end{align*}

This also corresponds to (four times) the mass of Brownian excursions from $\mcalB_l$ that hit $D \setminus D'$. In particular, the resulting expression is conformally invariant. The difference of these Dirichlet energies can also be seen as (four times) the difference of the inverse extremal distance $\mathrm{ED}(\mcalB_l,\mcalB_r')^{-1} - \mathrm{ED}(\mcalB_l,\mcalB_r)^{-1}$, see \cite[Theorem 4-5]{ahlfors_conformal_2010}. The formula above applies also to a general admissible domain $D$ (possibly multiply connected), where $\partial D=\mcalB_- \cup \mcalB_+$ with $\mcalB_-$, $\mcalB_+$ containing resp. $(yx)$ and $(xy)$, as well as the other boundary components of $\partial D$.

\subsection{Conformal restriction {SLE}}\label{subsec:remindersle}

We start by recalling the construction of chordal $\slek$ in the upper half-plane and then present Lawler's definition of what we call conformal restriction SLE.

\subsubsection{{SLE} in simply-connected domains}

Given $\kappa>0$, let $B_{\kappa \cdot}$ be a one-dimensional Brownian motion running at speed $\kappa$, and $(g_t(z))_{t\geq 0}$ be the solution to the following chordal Loewner equation:
\[
\partial_t g_t(z) = \frac{2}{g_t(z) - B_{\kappa t}}, \quad g_0(z)=z, \quad \quad z \in \Hp.
\]
For each $z \in \Hp$, the flow above is well-defined until $T(z) = \inf \inset{t \geq 0 : g_t(z) - B_{\kappa t} = 0}$. 

The resulting hulls $K_t := \inset{z \in \Hp: T(z) \leq t}$ almost surely correspond to simple curves for $0 \leq \kappa \leq 4$, and are generated by non-crossing curves for $4 < \kappa$ (i.e. there exists $\ga$ such that $K_t$ is equal to the unbounded connected component of $\Hp \setminus \ga_t$), as proved in \cite[Theorem 5.1, 6.1]{rohde_basic_2005}. Furthermore, $\ga(t) \to \infty$ almost surely for every $\kappa>0$. The probability measure on these curves generated by the Brownian motion is called the chordal $\slek$ measure in $\Hp$ from $0$ to $\infty$ and it is scale invariant.

 $\slek$ from $x$ to $y$ in a simply-connected domain $D$ with distinct $x, y \in \partial D$ is then defined as the image of $\slek$ in $\Hp$ (from $0$ to $\infty$) by a conformal map $\varphi: \Hp \to D, \varphi(0)=x, \varphi(+\infty)=y$. We denote these probability measures $\nu_{D,\kappa}(z,w)$ and from now on understand them as measures on curves up to (positive) time reparametrisation (since the conformal map $\varphi$ above is not unique, but there remains freedom in precomposing by a dilation).

The one-parameter family of measures $\inset{\nu_{D, \kappa}(z,w)}_{(D,z,w)}$ in fact describes the only probability measures on curves from $z$ to $w$ in $D$ satisfying conformal invariance (clear from the construction) and a conformal Markov property, that states that the conditional law of $\ga([t,+\infty))$ under $\nu_{D,\kappa}(z,w)$ given $\ga_t$ is equal to $\nu_{D \setminus \ga_t,\kappa}(\ga(t),w)$ (see \cite{schramm_scaling_2000}). For the rest, for readability let us leave out the dependence on $\kappa$ in the notation, and restrict ourselves to the range $\kappa \leq 4$ where the curves are simple.

\subsubsection{Conformal restriction {SLE}}

In multiply connected domains, the two properties above are not sufficient to characterise $\sle$\footnote{For instance, one could define a different SLE type of curve in a conformal annulus, that would behave like $\sle_2$ when the conformal modulus is greater than one and like $\sle_4$ otherwise. The conformal invariance requirement is too weak capture this change of behaviour of the curve because it cannot compare the law of the curve for different conformal classes of domains.}. However, in the simply connected case, these curves satisfy an extra property, called the restriction property or boundary invariance rule \cite{lawler_conformal_2003} (see also \cite[Proposition 2.1]{lawler_partition_2009}): for $K \subset \Hp$ a compact hull such that $\dist(K,0)>0$ and $\Hp \setminus K$ is (connected and) simply connected, it holds that $\nu_{\Hp\setminus K}(0,+\infty) \ll \nu_\Hp(0,+\infty)$ and that
\begin{equation}\label{eq:restrictionslehp}
\frac{\di \nu_{\Hp\setminus K}(0,+\infty)}{\di \nu_\Hp(0,+\infty)}(\eta) = \ind{\eta \subset \Hp \setminus K} \abs{\varphi'(0)}^{-h} \exp(\frac{\cc}{2} m_{\Hp}(\eta, K)),
\end{equation}
where $\varphi:\Hp\setminus K \to \Hp$ is the conformal equivalence such that $\varphi(0)=0$ and $\varphi(z) = z + o(1)$ as $z \to \infty$. We note that the term $\abs{\varphi'(0)}$ corresponds to the probability that a Brownian excursion from $0$ to $\infty$ in $\Hp$ does not hit $K$ \cite[Proposition 4.1]{lawler_conformal_2003}: similarly, for $D$ a simply connected admissible domain and $D'$ a test domain,
\begin{equation}\label{eq:restrictionsled}
\frac{\di \nu_{D'}(x,y)}{\di \nu_D(x,y)}(\eta) = \ind{\eta \subset D'} \paren{\frac{H_{\partial D'}(x,y)}{H_{\partial D}(x,y)}}^{-h} \exp(\frac{\cc}{2} m_{D}(\eta, D \setminus D')),
\end{equation}
where $H_{\partial D'}(x,y)/H_{\partial D}(x,y)$ is again the probability that an excursion from $x$ to $y$ in $D$ remains in $D'$.
It was suggested in \cite{lawler_partition_2009} to use this property to characterise $\sle$ in general domains, provided that we consider $\sle$ as measures with mass $Z_{D}(x,y)=Z_{D,\kappa}(x,y)$. Note that some regularity of $\partial D$ at $x$ and $y$ is required to define this mass or partition function: but this is not restrictive at the level of the corresponding probability measures, as any (finitely connected) domain is conformally equivalent to one with smooth boundary around the images of $x$ and $y$, so that the $\sle$ probability measure can be defined there and brought back to the initial domain.

\begin{definition}[Conformal restriction {SLE}, \cite{lawler_defining_2011}]\label{def:sle}
    $\sle$ is the unique family of measures $\inset{\nu_D(x,y)}_{(D,x,y)}$ on curves (modulo time-reparametrisation) in $D$ a general admissible domain from $x$ to $y$ of total mass $Z_D(x,y)=\norm{\nu_D(x,y)}$ normalised so that $Z_\Hp(0,+\infty)=1$, satisfying the following:
    \begin{itemize}
        \item \textbf{Conformal covariance} If $\varphi:D \to D'$ is a conformal mapping, then the pushforward measure $\varphi_\ast \nu_D(x,y)$ satisfies
        \begin{equation}\label{eq:confcov}
            \varphi_\ast \nu_D(x,y) = \abs{\varphi'(x)}^h \abs{\varphi'(y)}^h \nu_{D'}(f(x),f(y)).
        \end{equation}
        In particular, at the level of probability measures, denoting $\nu_D^\#(x,y) := \nu_D(x,y)/Z_D(x,y)$ if $Z_D(x,y)=\norm{\nu_D(x,y)}<+\infty$, conformal invariance holds:
        \[
            \varphi_\ast \nu_D^\#(x,y) = \nu_{D'}^\#(f(x),f(y)).
        \]
        \item \textbf{Domain Markov property} If $Z_D(x,y)<+\infty$, then the conditional probability measure of the remainder $\eta([t,+\infty))$ of a curve $\eta$ with respect to $\nu_D^\#(x,y)$ conditional on $\eta_t$ is given by $\nu_{D \setminus \eta_t}^\#(\eta(t),y)$.
        \item \textbf{Boundary perturbation/restriction property} Let $D'\subset D$ be admissible domains agreeing in a neighbourhood of $x$ and $y$. Then $\nu_{D'}(x,y) \ll \nu_D(x,y)$, with Radon-Nikodym derivative given by
        \begin{equation}\label{eq:restriction}
            \frac{\di \nu_{D'}(x,y)}{\di \nu_D(x,y)}(\eta)=\ind{\eta \subset D'} \exp(\frac{\cc}{2}m_D(\eta,D\setminus D')).
        \end{equation}
    \end{itemize}
\end{definition}

Note that in the case of simply connected domains, it is consistent with Equations~\eqref{eq:restrictionslehp} and \eqref{eq:restrictionsled} to define $Z_D(x,y)=H_D(x,y)^h$ when $x,y \neq \infty$, and $Z_D(x,y)=\abs{\phi'(x)}^h$ when $y=\infty$, where $\varphi:D\to \Hp$ is the conformal equivalence with $\varphi(x)=0$ and $\varphi(z)=z+o(1)$ as $z \to \infty$.

In \cite[Section 4]{lawler_defining_2011}, these measures are constructed in any general admissible domain, from a Carathéodory argument: the restriction of $\nu_D(x,y)$ to curves contained in a general test domain $D'$ is prescribed by the boundary perturbation rule, and the restriction to different test domains are compatible thanks to the additive structure of the loop measure. 
This justifies the denomination of `test domains': to identify a measure on curves in a domain $D$ as the $\sle$ measure it is sufficient to verify Equation~\eqref{eq:restriction} for $D' \subset D$ simply connected. As a matter of fact, it is enough to take $D'$ with smooth boundary, as the identity then holds for all subdomains by approximation (see \cite{lawler_conformal_2003} for more details about approximations of hulls by smooth hulls). Note that uniqueness also follows from the construction.

We further stress that some conditions in the definition above are redundant: the domain Markov property and conformal covariance for $\sle$ in general domains is a consequence of the fact that these properties hold on simply connected domains, and are inherited through the boundary perturbation rule (provided that the partition function is finite: see again \cite[Section 4]{lawler_defining_2011} for all the details). Reversibility of the $\sle$ is also inherited from the simply connected case, which was proved in \cite{zhan_reversibility_2008}.

On the other hand, finiteness of the total mass i.e. the partition function is less clear. In \cite{lawler_defining_2011}, it is shown to be finite when $\kappa\leq 8/3$, or when $D$ is a conformal annulus. In this paper, we construct explicitly the $\sle_4$ measure in all topological cases and the $\slek$ measure for $8/3 < \ka \leq 4$ in the non-crossing case, and provide an expression for their partition function that proves finiteness.


\section{\texorpdfstring{$\sle_4$}{SLE4} as a mixture of level-lines of Gaussian free fields}\label{sec:sle4}

In this section we provide an explicit construction of the $\sle$ measure in the specific case of $\kappa=4$, exploiting its relation with the Gaussian free field: this is possible both in the non-crossing and crossing case. First, we collect some classical facts about Gaussian free fields and $\sle_4$.

\subsection{Preliminaries}
\subsubsection{Gaussian free field}

The (Dirichlet, with zero boundary conditions) Gaussian Free Field (GFF) in a domain $D$ is a Gaussian process $\Phi$ on $H_0^1(D)$, with mean zero and covariance function
\[
\E{\Phi_{f_1} \Phi_{f_2}} = \iint_{D \times D} f_1(x) G_D(x,y)f_2(y) \ad{x} \ad{y} < + \infty,
\]
where $G_D$ is the Green function previously introduced.
It is possible to define the GFF as a random element of $H^{-\eps}(D)$ for any $\eps > 0$ \cite{werner_lecture_2021, dubedat_sle_2009}); in particular, this justifies the writing of $\Phi_f$ as $(\Phi,f)_D$. For our purposes it will suffice to see the GFF as an element of $H^{-1}(D)$. For $g$ a harmonic function in $D$, the GFF with boundary condition $g$ is understood as $\Phi^g := \Phi^0 + g$, where $\Phi^0$ is a zero-boundary GFF in $D$. From now on we let $\Phi_D^g$ be our notation for a GFF in $D$ with mean $g$.

For $f \in H_0^1(D)$, $\Phi_D^0+f$ is also a zero-boundary GFF under the law that is also absolutely continuous w.r.t. that of $\Phi_D^0$, with Radon-Nikodym derivative given by
\[
\exp((\Phi_D^0, f)_{\nabla(D)} - (f,f)_{\nabla(D)}),
\]
where $( \cdot, \cdot)_{\nabla(D)}$ is the Dirichlet inner product and $(\Phi_D^0, f)_{\nabla(D)} := (\Phi_D^0, -\Delta f)$. \footnote{Note that it is possible to naturally make sense of the GFF tested against functions in $H^{-1}(D)$, such as $-\Delta f \in H^{-1}(D)$: see \cite[Section 1.8]{berestycki_gaussian_2024}.} This in fact only holds when $f \in H_0^1(D)$, identifying the latter as the Cameron-Martin space of the GFF.

The GFF also enjoys a natural domain Markov property \cite[Chapter 4]{werner_lecture_2021}. We will use it here in the following set-up. For $D$ a domain and $\delta$ a smooth crosscut splitting $D$ in $D_1$ and $D_2$, we can write
\[
\Phi_D^0 = \Phi_{D_1}^0 + v_{\Phi_D^0\restr{\delta}} + \Phi_{D_2}^0
\]
where $v_{\Phi_D^0\restr{\delta}}$ is a random harmonic function in $D \setminus \delta$, and all summands on the right-hand side are independent. In fact, one can define (see \cite[Section 4.3.2]{dubedat_sle_2009}) a pathwise trace of $\Phi_D^0$ on $\delta$ as $\Phi_D^0\restr{\delta} \in H^{-1}(\de)$, whose law is a centred Gaussian process of covariance $G_D \restr{\de \times \de}$, so that $v_{\Phi_D^0\restr{\delta}}$ defines the harmonic extension of $\Phi_D^0\restr{\delta}$ to $D\setminus \de$ (which is indeed harmonic as the Poisson kernel is smoothing).\\


An important class of random subsets of $D$, commonly called local sets, are those with which the GFF satisfies a strong Markov property.

\begin{definition}[Local sets]
    Given a triple $(\Phi=\Phi_D, A, \Phi_A)$ with $\Phi_D$ a GFF in $D$, $A \subset \overline{D}$ a random closed set and $\Phi_A$ a random distribution that can be seen as a harmonic function $h_A$ when restricted to $D \setminus A$, we say that $A$ is a local set for $\Phi$, or that $(\Phi, A, \Phi_A)$ is a local set coupling, if conditionally on $(A, \Phi_A)$, $\Phi - \Phi_A$ is a zero-boundary GFF in $D \setminus A$.
\end{definition}

Changing the measure with a shift of the GFF given by some $f \in H_0^1(D)$ as in the Cameron-Martin theorem is also compatible with local sets, as shows the following lemma from \cite{aru_extremal_2022}:

\begin{lemma}[Lemma 3.1 in \cite{aru_extremal_2022}]\label{lem:rnderivativecle4}
    Consider $(\Phi:=\Phi_D^0,A,\Phi_A)$ a local set coupling, $\tilde \Phi = \Phi - f$, and $f_A$ the orthogonal projection of $f$ in $H_0^1(D)$ on the subspace of functions that are harmonic in $D \setminus A$. Set $\tilde \Phi_A := \Phi_A - f_A$. Then $(\tilde \Phi, A, \tilde \Phi_A)$ is a local set coupling and the law of $(A, \Phi_A)$ under $\Phi$ is absolutely continuous with respect to the law of $(A, \tilde \Phi_A + f_A) = (A, \Phi_A)$ under $\tilde \Phi$, with Radon-Nikodym derivative given by
    \[
    \exp((\Phi_A,f)_\nabla - \inv{2}(f_A,f_A)_\nabla). 
    \]
\end{lemma}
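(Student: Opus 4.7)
The plan is to combine the Cameron-Martin theorem for the GFF with the orthogonal decomposition of test functions induced by a local set. Throughout, set $\Phi^\circ := \Phi - \Phi_A$ and $f^\circ := f - f_A$. By definition of $f_A$ as the Dirichlet projection onto harmonic-in-$(D\setminus A)$ functions, $f^\circ$ lies in $H_0^1(D\setminus A)$; and conditionally on $(A, \Phi_A)$, the field $\Phi^\circ$ is a zero-boundary GFF in $D\setminus A$.

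First I would check that $(\tilde\Phi, A, \tilde\Phi_A)$ is a local set coupling by rewriting $\tilde\Phi - \tilde\Phi_A = \Phi^\circ - f^\circ$ and observing that the change of measure turning $\tilde\Phi$ into a zero-boundary GFF reduces, inside $D\setminus A$, to a Cameron-Martin shift of $\Phi^\circ$ by $f^\circ \in H_0^1(D\setminus A)$. Under the new law this exhibits $\tilde\Phi - \tilde\Phi_A$ as a zero-boundary GFF in $D\setminus A$ conditionally on $(A, \tilde\Phi_A)$, which is the required local set property.

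Next, for the Radon-Nikodym derivative of the marginal law on $(A, \Phi_A)$, I would start from the Cameron-Martin density for the full field, $\exp((\Phi, f)_\nabla - \inv{2}(f,f)_\nabla)$. Splitting $\Phi = \Phi_A + \Phi^\circ$ and $f = f_A + f^\circ$, two cross-terms in $(\Phi, f)_\nabla$ vanish by the orthogonality built into the local set structure: $(\Phi_A, f^\circ)_\nabla = 0$ since $\Phi_A$ is harmonic in $D\setminus A$ while $f^\circ \in H_0^1(D\setminus A)$, and $(\Phi^\circ, f_A)_\nabla = 0$ by the dual statement, since $\Phi^\circ$ is (conditionally) a zero-boundary GFF in $D\setminus A$ while $f_A$ is harmonic there. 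Combined with $(f,f)_\nabla = (f_A, f_A)_\nabla + (f^\circ, f^\circ)_\nabla$, the full-field density factorises as
\[
\exp\!\left((\Phi_A, f)_\nabla - \inv{2}(f_A, f_A)_\nabla\right) \cdot \exp\!\left((\Phi^\circ, f^\circ)_\nabla - \inv{2}(f^\circ, f^\circ)_\nabla\right),
\]
using $(\Phi_A, f_A)_\nabla = (\Phi_A, f)_\nabla$ in the first factor. The first factor is $(A, \Phi_A)$-measurable, while the second is exactly the Cameron-Martin density of the zero-boundary GFF $\Phi^\circ$ in $D\setminus A$ shifted by $f^\circ \in H_0^1(D\setminus A)$, so its conditional expectation given $(A, \Phi_A)$ equals one. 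Only the first factor survives, matching the claimed density.

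The hard part will be making the orthogonality identities rigorous — especially $(\Phi_A, f^\circ)_\nabla = 0$, since $\Phi_A$ is a priori only a random distribution in $H^{-1}(D)$ whose support may reach $A$. The pairing must be interpreted distributionally as $(\Phi_A, -\Delta f^\circ)_D$, and the cleanest route to verifying it vanishes is by approximation: smooth $f^\circ$ so that it becomes compactly supported inside the open set $D\setminus A$, invoke the harmonicity of $\Phi_A$ there, and pass to the limit using continuity of the Dirichlet inner product on $H_0^1(D)$ together with the martingale property of $\Phi_A$ along the local set filtration.
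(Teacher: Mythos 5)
The paper does not prove this lemma---it is imported verbatim from \cite{aru_extremal_2022}---so there is no in-paper proof to compare against; your argument is correct and is essentially the standard one (and the one used in the cited reference): condition the Cameron--Martin density on $(A,\Phi_A)$, split $\Phi=\Phi_A+\Phi^\circ$ and $f=f_A+f^\circ$ along the orthogonal decomposition $H_0^1(D)=H_0^1(D\setminus A)\oplus\mathrm{Harm}(D\setminus A)$, kill the cross-terms, and observe that the remaining factor is the Cameron--Martin density of the conditional GFF in $D\setminus A$, hence has conditional expectation one. You correctly identify the only genuinely delicate point, namely giving rigorous meaning to $(\Phi_A,f^\circ)_\nabla=0$ when $\Phi_A$ is only a distribution whose singular support may meet $A$; your proposed resolution (approximate $f^\circ$ by smooth functions compactly supported in $D\setminus A$, use harmonicity of $h_A$ there, and pass to the limit) is the right one and is how this is handled in the literature.
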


Lastly, we will later need to compute the expectation of expressions similar to that appearing in the previous lemma, for $f$ a random function, normally distributed. The following lemma provides a formula that will be relevant in this computation.

\begin{lemma}[Proposition 1.2.8 in \cite{da_prato_second_2002}]\label{lem:dub}
    Let $H$ be a Hilbert space with $Q$ a positive, trace class operator, $M$ symmetric such that $Q^{1/2}MQ^{1/2} < I$, $m \in H$. Denote by $N_Q$ the centred Gaussian measure on $H$ with covariance $Q$. Then
    \begin{align*}
    \Ef{N_Q}{\exp(\inv{2} \scal{Mh,h}_H + \scal{m,h}_H)} \quad & \\
    = \paren{\det_F(1-Q^{1/2}MQ^{1/2})}^{-1/2}& \exp(\inv{2} \norm{(1-Q^{1/2}MQ^{1/2})^{-1/2} Q^{1/2} m}_H^2).
    \end{align*}
\end{lemma}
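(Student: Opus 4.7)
The plan is to reduce this infinite-dimensional Gaussian integral to a classical finite-dimensional one via the Karhunen–Loève expansion, compute the resulting quadratic Gaussian integral by completing the square, and then pass to the limit using the trace-class hypothesis on $Q$. Since $Q$ is positive and trace class, it admits an orthonormal eigenbasis $(e_k)_{k \geq 1}$ of $H$ with eigenvalues $\lam_k > 0$ satisfying $\sum_k \lam_k < +\infty$, and a sample $h \sim N_Q$ has the representation $h = \sum_k \sqrt{\lam_k} \xi_k e_k$ with $(\xi_k)$ i.i.d.\ standard Gaussian (the series converging both in $H$ almost surely and in $L^2$). Setting $M_{jk} := \scal{Me_j,e_k}_H$ and $m_k := \scal{m,e_k}_H$, the exponent in the integrand rewrites as
\[
\inv{2} \scal{Mh,h}_H + \scal{m,h}_H = \inv{2} \sum_{j,k} \sqrt{\lam_j \lam_k} M_{jk} \xi_j \xi_k + \sum_k \sqrt{\lam_k} m_k \xi_k,
\]
where $(\sqrt{\lam_j \lam_k} M_{jk})_{j,k}$ is the matrix of $Q^{1/2} M Q^{1/2}$ and $(\sqrt{\lam_k} m_k)_k$ that of $Q^{1/2} m$ in the basis $(e_k)$.

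Truncating to the first $n$ coordinates and denoting $\Lambda^{(n)}$ and $b^{(n)}$ the resulting $n \times n$ matrix and vector, the classical finite-dimensional identity (obtained by completing the square in the Gaussian exponent, valid since the spectral condition $Q^{1/2}MQ^{1/2} < I$ restricts to every principal submatrix) gives
\[
\Expec\!\exp(\inv{2} (\xi^{(n)})^T \Lambda^{(n)} \xi^{(n)} + (b^{(n)})^T \xi^{(n)}) = \det(I_n - \Lambda^{(n)})^{-1/2} \exp(\inv{2} (b^{(n)})^T (I_n - \Lambda^{(n)})^{-1} b^{(n)}).
\]
On the right-hand side, as $n \to +\infty$, $\det(I_n - \Lambda^{(n)})$ converges to the Fredholm determinant $\det_F(I - Q^{1/2} M Q^{1/2})$ because $Q^{1/2} M Q^{1/2}$ inherits the trace-class property from $Q$, while $(b^{(n)})^T (I_n - \Lambda^{(n)})^{-1} b^{(n)}$ converges to $\norm{(I - Q^{1/2} M Q^{1/2})^{-1/2} Q^{1/2} m}_H^2$ since $(I - Q^{1/2} M Q^{1/2})^{-1}$ is bounded under the spectral assumption and $Q^{1/2} m \in H$.

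The principal obstacle is justifying the passage to the limit on the left-hand side, as one is integrating an exponential of an unbounded quadratic form. The key point is that $Q^{1/2} M Q^{1/2}$ is compact (being the conjugation of the bounded operator $M$ by the compact $Q^{1/2}$), so the spectral condition $Q^{1/2} M Q^{1/2} < I$ upgrades to a spectral gap $Q^{1/2} M Q^{1/2} \leq (1-\de) I$ for some $\de > 0$; this yields uniform integrability of the truncated integrands, so that monotone or dominated convergence applies after splitting the quadratic and linear parts of the exponent into positive and negative components. Combined with the convergence of the right-hand side identified above, this yields the claimed identity.
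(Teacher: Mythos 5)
The paper does not prove this lemma; it is imported verbatim from \cite{da_prato_second_2002}, so there is no internal argument to compare against. Your diagonalisation-and-truncation strategy is the standard route to this identity, and most of the skeleton is sound: the finite-dimensional Gaussian identity, the convergence of the determinants (since $P_nQ^{1/2}MQ^{1/2}P_n \to Q^{1/2}MQ^{1/2}$ in trace norm and $\det_F$ is trace-norm continuous), the convergence of the quadratic term in $m$ (uniform boundedness of the inverses plus strong convergence), and the observation that compactness of $Q^{1/2}MQ^{1/2}$ upgrades the strict inequality to a spectral gap $Q^{1/2}MQ^{1/2}\le(1-\de)I$ are all correct.

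The one step that does not work as written is the interchange of limit and expectation on the left-hand side. Writing $A=Q^{1/2}MQ^{1/2}$ and $F_n=\frac12\scal{A\xi^{(n)},\xi^{(n)}}+\scal{b^{(n)},\xi^{(n)}}$, the spectral gap alone only yields the pointwise bound $F_n\le\frac{1}{2}(1-\frac{\de}{2})\norm{\xi^{(n)}}^2+C_\de$, and $\E{\exp(\frac{1}{2}(1-\frac{\de}{2})\norm{\xi^{(n)}}^2)}=(\de/2)^{-n/2}\to\infty$, so there is no single integrable dominating function; moreover $F_n$ is not monotone in $n$, so neither dominated nor monotone convergence applies in the form you invoke. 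Uniform integrability is indeed the right concept, but it has to be \emph{proved}, and the clean way is to show $\sup_n\E{e^{pF_n}}<\infty$ for some $p\in(1,(1-\de)^{-1})$ and then conclude by Vitali's theorem together with the a.s.\ convergence $F_n\to F_\infty$ (which holds because $P_nh\to h$ in $H$ a.s.\ and $M$ is bounded). The supremum is controlled by applying your own finite-dimensional identity with $M$ replaced by $pM$: the exponential factor is bounded by $\exp\paren{p^2\norm{Q^{1/2}m}^2/(2(1-p(1-\de)))}$ uniformly in $n$, while the determinant factor is bounded uniformly in $n$ because, by the min--max principle, the positive eigenvalues of the compression $P_nAP_n$ are dominated term by term by those of $A$, whose sum is finite precisely because $A$ is trace class. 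Note that this is where the trace-class hypothesis enters the left-hand side as well, not only the Fredholm determinant on the right; once this estimate is supplied, your proof is complete.
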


\subsubsection{Coupling between $\sle_4$ and the GFF}\label{subsec:coupling}

In simply connected domains, the coupling between GFF and $\sle_4$ was discovered in \cite{schramm_contour_2013, dubedat_sle_2009}, see also \cite[Chapter 5]{werner_lecture_2021}. To state this result, recall that we denote by $g_D$ the harmonic function with boundary values $-\lam$ (resp. $+\lam$) on $(yx)$ (resp. on $(xy)$), and by $\Phi_D^{g_D}=\Phi_D^0 + g_D$ a GFF in $D$ with mean given by $g_D$.

\begin{theorem}[$\sle_4$ is a local set for the GFF]\label{thm:sle4gffsimplyconnected}
    Let $D$ be a simply connected domain with distinct $x, y \in \partial D$. There exists a coupling $(\Phi_D^{g_D}, \eta, \Phi_\eta)$ with $\eta \sim \nu_{D,4}^\#(x,y)$ an $\sle_4$ curve from $x$ to $y$ in $D$ that is a local set coupling, with $\Phi_\eta$ given by $h_\eta$ on $D \setminus \eta$, $h_\eta(z):= - \lam + 2 \lam \mathbf{1}(z \in (D\setminus\eta)_{\text{right}})$. Furthermore, in this coupling, $\eta$ is a deterministic function of $\Phi_D^{g_D}$.
\end{theorem}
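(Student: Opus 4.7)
The plan is to reduce to the upper half-plane by conformal invariance and then construct the coupling directly via Loewner's equation and an It\^o computation, following Schramm--Sheffield \cite{schramm_contour_2013} (see also \cite{dubedat_sle_2009} and \cite[Chapter 5]{werner_lecture_2021}). Since both $\nu_{D,4}^\#(x,y)$ and the law of $\Phi_D^{g_D}$ are conformally covariant, it is enough to work in $D = \Hp$ with $x = 0$, $y = \infty$, where $\sle_4$ is driven by $W_t = 2 B_t$ in the Loewner equation $\partial_t g_t(z) = 2/(g_t(z) - W_t)$, yielding the curve $\eta$ and its associated maps $g_t : \Hp \setminus \eta_t \to \Hp$. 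I would then define
\[
h_t(z) := -\lam + \frac{2\lam}{\pi}\arg(g_t(z) - W_t),
\]
which is the harmonic extension to $\Hp \setminus \eta_t$ of $-\lam$ on the left side of $\eta_t \cup \R$ and $+\lam$ on the right side; in particular $h_0 = g_\Hp$.

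The key technical step is to show that $h_t$ is a martingale when tested against suitable functions. Applying It\^o's formula to $\log(g_t(z) - W_t)$ gives
\[
d\log(g_t(z) - W_t) = -\frac{dW_t}{g_t(z) - W_t} + \left(2 - \frac{\ka}{2}\right)\frac{dt}{(g_t(z) - W_t)^2},
\]
so at the critical value $\ka = 4$ the drift vanishes and, taking imaginary parts,
\[
dh_t(z) = -\frac{2\lam}{\pi}\,\mathrm{Im}\!\left(\frac{1}{g_t(z) - W_t}\right) dW_t
\]
is a local martingale. Testing against $f \in H_0^1(\Hp)$, the process $M_t(f) := (h_t - h_0, f)_{\Hp}$ is then a continuous local martingale whose quadratic variation I would compute using the Poisson-kernel representation of $h_t$ together with a Hadamard-type variation formula for $G_\Hp$; this equals the loss of Dirichlet energy paired with $f$, and the specific value $\lam = \sqrt{\pi/8}$ is pinned precisely by requiring agreement with the Green's function difference $G_\Hp - G_{\Hp \setminus \eta_t}$. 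Given this martingale together with its quadratic variation, one realises $\Phi_\Hp^{g_\Hp}$ as $h_t + \Psi_t$ where $\Psi_t$ is independent of $\eta_t$ and, by the matching of covariances, distributed as a zero-boundary GFF in $\Hp \setminus \eta_t$. This produces the claimed local set coupling with $\Phi_\eta = h_\eta$.

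For the measurability statement — that $\eta$ is a deterministic function of $\Phi_D^{g_D}$ — the plan is to invoke the standard two-local-sets uniqueness argument of Dub\'edat \cite{dubedat_sle_2009} (see also the imaginary-geometry framework of Miller--Sheffield). If $\eta$ and $\eta'$ were two $\sle_4$-distributed level lines coupled to the same field, then $\eta \cup \eta'$ is again a local set, and its associated harmonic function is constrained to take compatible boundary values in $\{-\lam, +\lam\}$ along both curves; a maximum-principle argument combined with the strict positivity of the GFF variance on any open set then rules out any discrepancy and forces $\eta = \eta'$ a.s. The main obstacle, in my view, lies precisely here: upgrading the pointwise It\^o martingale to a rigorous distribution-valued coupling, and controlling the fine topology of how two such level lines can differ inside $\overline{D}$ so that the two-local-sets uniqueness actually applies, is the truly delicate input.
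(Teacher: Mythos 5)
The paper does not actually prove this theorem: it is quoted as the classical $\sle_4$--GFF coupling with references to Schramm--Sheffield and Dub\'edat, and your proposal is a faithful sketch of precisely that cited argument (It\^o's formula on the Loewner flow to obtain the height-function martingale at $\ka=4$, Hadamard's variation formula to match the quadratic variation with $G_{\Hp}-G_{\Hp\setminus\eta_t}$, and the two-local-sets uniqueness argument for measurability), so you are taking the same route the paper relies on. The only slip is a sign/orientation one: with the paper's convention that $g_D=-\lam$ on the counterclockwise arc $(yx)$ (the negative reals in $\Hp$, which lie to the \emph{left} of the curve from $0$ to $\infty$), the correct harmonic function is $h_t(z)=\lam-\tfrac{2\lam}{\pi}\arg\left(g_t(z)-W_t\right)$, since your formula evaluates to $+\lam$ where $\arg=\pi$, i.e.\ on the left side, contradicting your own description of its boundary values.
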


In other words, there is a local set coupling $(\Phi_D^{g_D}, \eta, h_\eta)$ in which the marginal law of $\eta$ is that of an $\sle_4$ from $x$ to $y$ and such that conditionally on $\eta$, $\Phi_D^{g_D} = \Phi_{D \setminus \eta, \text{left}}^{-\lam} + \Phi_{D \setminus \eta, \text{right}}^{+\lam}$, a sum of two independent GFFs on each side of $D \setminus \eta$. When the domain is no longer simply connected, or the harmonic function $g_D$ is replaced with a bounded harmonic function with different boundary conditions away from the target points $x$ and $y$, it is still possible to make sense of a level-line of the GFF.

\begin{proposition}[General level-lines, {{\cite[Lemma 15]{aru_bounded-type_2019}}}]
    Let $(D,x,y)$ be a non-crossing admissible domain and $g$ a bounded harmonic function in $D$ with Dirichlet boundary conditions $-\lam$ (resp. $+\lam$) on $(yx)$ (resp. $(xy)$), arbitrary on other parts of $\partial D$. Then there exists a unique law on random simple curves $\eta$ in $D$ (modulo time-reparametrisation, up to its exit time $t_\eta$ of $D$) with $\eta(0)=x$ that can be coupled with a GFF $\Phi_D^g$ such that $(\Phi_D^g,\eta,\Phi_\eta)$ is a local set coupling, and $\Phi_\eta$ corresponds to the unique harmonic function in $D \setminus \eta$ with boundary values given by $g$ on $\partial D$ and $-\lam$ (resp. $+\lam$) on the left side of $\eta$ (resp. on the right side of $\eta$). In this coupling $\eta$ is a deterministic function of $\Phi_D^{g}$ and we call it the generalised level-line of $\Phi_D^g$. 
\end{proposition}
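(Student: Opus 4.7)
The strategy is to reduce the multiply-connected case to the simply-connected case of Theorem~\ref{thm:sle4gffsimplyconnected} by localising the construction in simply-connected test domains and then extending it through an exhaustion.

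\textbf{Step 1 (local construction).} Fix a smooth test domain $D' \subset D$ with $x, y \in \partial D'$. By the domain Markov property of the GFF (see Section~\ref{subsec:coupling}) we decompose
\[
\Phi_D^g \restr{D'} = \tilde \Phi_{D'}^0 + g_{D',\xi},
\]
where $\xi$ denotes the trace of $\Phi_D^g$ on the interior crosscut $\partial D' \setminus \partial D$, which lies at positive distance from $x$ and $y$ so that $\xi$ is smooth there almost surely; $g_{D',\xi}$ is the harmonic extension in $D'$ of the boundary values $g$ on $\partial D' \cap \partial D$ and $\xi$ on the rest; and $\tilde \Phi_{D'}^0$ is an independent zero-boundary GFF in $D'$. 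Conditionally on $\xi$, the restricted field is thus a GFF in the simply-connected domain $D'$ with bounded harmonic mean agreeing with $\pm\lambda$ near $x$ and $y$. Theorem~\ref{thm:sle4gffsimplyconnected}, combined with an absolute-continuity argument that compares $g_{D',\xi}$ with the reference mean $g_{D'}$ of Theorem~\ref{thm:sle4gffsimplyconnected}, produces a generalised level line $\eta^{D'}$ measurable with respect to $\Phi_D^g \restr{D'}$ and satisfying the local-set property with respect to it.

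\textbf{Step 2 (exhaustion).} Take an exhaustion $D_1 \subset D_2 \subset \dots$ of $D$ by smooth test domains with $D_n \nearrow D$. Apply Step 1 in each $D_n$ to get candidate level lines $\eta^{(n)}$. Combining Lemma~\ref{lem:rnderivativecle4} with the strong Markov property of the GFF shows that the $\eta^{(n)}$ are consistent: stopping $\eta^{(n+1)}$ at its exit of $D_n$ yields a curve with the same law as $\eta^{(n)}$. The monotone limit then defines $\eta$ up to its exit time $t_\eta$ of $D$, and by construction $\eta$ is a deterministic function of $\Phi_D^g$ and satisfies the claimed local-set coupling.

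\textbf{Step 3 (uniqueness).} If $(\Phi_D^g, \eta, \Phi_\eta)$ and $(\Phi_D^g, \tilde \eta, \Phi_{\tilde \eta})$ are two couplings as in the statement, the union $\eta \cup \tilde \eta$ together with the common harmonic extension on $D \setminus (\eta \cup \tilde \eta)$ is again a local set of $\Phi_D^g$. Standard local-set arguments (as in \cite{aru_extremal_2022}) then force $\eta = \tilde \eta$ almost surely, which also upgrades the measurability statement to its global form.

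The main obstacle is Step 1: the boundary modification $g_{D',\xi} - g_{D'}$ is bounded and harmonic but not in $H_0^1(D')$, so Lemma~\ref{lem:rnderivativecle4} does not apply directly. This is resolved by approximating by smooth compactly supported perturbations vanishing near $x$ and $y$, using continuity of the level-line coupling under bounded perturbations of the boundary data, as developed in \cite{aru_bounded-type_2019}.
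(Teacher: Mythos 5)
Your overall strategy (localise to simply connected subdomains, invoke the simply-connected level-line theory there, patch the pieces together, and get uniqueness from local-set arguments) is the same as the paper's, which defers to \cite[Lemma 15]{aru_bounded-type_2019} and replicates the argument for the crossing case in Appendix~\ref{app:existencelevelline}. But Step~2 as written cannot be carried out: an increasing union of simply connected open subsets of $\C$ is simply connected (any loop in the union is compact, hence contained in some $D_n$, hence contractible there), so no sequence of simply connected test domains can satisfy $D_n \nearrow D$ when $D$ is multiply connected. The gluing has to be done not along an increasing exhaustion but along a countable \emph{cover} of $D$ by simply connected subdomains with $x$ on their boundary (the paper uses interiors of finite unions of dyadic squares containing $x$), with the level lines in any two such subdomains shown to agree \emph{almost surely} up to the exit time of their overlap by local uniqueness. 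Relatedly, your consistency claim is only an equality in law: knowing that $\eta^{(n+1)}$ stopped at $\partial D_n$ has the same law as $\eta^{(n)}$ does not allow you to glue the curves into a single random curve measurable with respect to the field; you need pathwise agreement, which is exactly what local uniqueness of level lines provides.

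A second, smaller point: the reduction of the local construction to Theorem~\ref{thm:sle4gffsimplyconnected} via absolute continuity is the wrong input. As you yourself note, $g_{D',\xi}-g_{D'}$ does not belong to $H_0^1(D')$, so Lemma~\ref{lem:rnderivativecle4} (Cameron--Martin) does not apply, and ``continuity of the level-line coupling under bounded perturbations of the boundary data'' is not an off-the-shelf statement. What is actually used, both in the cited lemma and in Appendix~\ref{app:existencelevelline}, is the existence, uniqueness and measurability of generalised level lines of a GFF in a \emph{simply connected} domain with general bounded boundary data agreeing with $\mp\lambda$ near the starting point; this is the content of \cite{schramm_contour_2013, dubedat_sle_2009, powell_level_2017} and is taken as a black box, with no absolute-continuity reduction to the $\pm\lambda$ case. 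Absolute-continuity arguments of the kind you sketch do appear in the paper, but only later (Proposition~\ref{prop:computderiv}), and only after cutting off the perturbation at positive distance from the crosscut so that it genuinely lies in $H_0^1(D')$ and one can work on the event that the curve stays away from the modified region.
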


Furthermore, the probability that $\eta(t)$ converges to $y$ as $t \to t_\eta$ is equal to $1$ if $g$ is piecewise constant and $\abs{g} \geq \lam$ on each connected component of $\partial D \setminus \inset{x,y}$ \cite[Lemma 3.1]{powell_level_2017} (see also \cite[Lemma 16]{aru_bounded-type_2019}, or Appendix~\ref{app:existencelevelline} that retraces this argument in the case of crossing domains). This will always be the case for us, so that the level-lines correspond to simple curves from $x$ to $y$ in $D$ (up to reparametrisation). 

In the case of $(D,x,y)$ a crossing domain, generalised level-lines can also be defined for GFFs $\Phi_D^g$, where this time $g$ is harmonic multivalued with boundary values locally constant with a change of $\pm 2 \lam$ around $x$ and $y$ on $\partial D$ (see Section~\ref{subsec:crossingsle4}). The proof is very similar to \cite[Lemma 16]{aru_bounded-type_2019}, but for the sake of completeness we include a more thorough explanation in Appendix~\ref{app:existencelevelline}.

\subsection{Constructing the non-crossing \texorpdfstring{$\sle_4$}{SLE4} measure}\label{subsec:noncrossingsle4}

In this section, we construct a measure on curves joining two points in the same boundary component of a non-crossing admissible domain $(D, x, y)$ as defined in Section~\ref{subsec:notations}. We start by explaining the set-up, then state and prove Theorem~\ref{thm:noncrossing} and finish the section with some additional remarks.

\subsubsection{Set-up and statement}

For $\bfb = (b_1, \dots, b_n) \in \inset{\pm\lam}^n$, we consider the set of curves from $x$ to $y$ in $D$ leaving the components of $\mcalB$ corresponding to the negative components of $\bfb$ to the left, the others to the right. Formally, we define the events
    \begin{align*}
        \mcalA_{\mathbf{b}} := \{\eta \text{ curve } x \to y \text{ in } D \, | \, \forall j & \text{ s.t. } b_j=-\lam, \mcalB_j\subset(D \setminus \eta)_{\text{left}} \text{ and } \forall j \text{ s.t. } b_j=+\lam, \mcalB_j\subset(D \setminus \eta)_{\text{right}}\},
    \end{align*}
with curves here understood modulo time-increasing reparametrisation. These events partition $\inset{\eta \subset D}$ since $\mcalA_\bfb \cap \mcalA_{\bfb'}=\emptyset$ for $\bfb' \neq \bfb$, and any simple curve from $x$ to $y$ in $D$ must belong to $\mcalA_{\bfb'}$ for some $\bfb' \in \inset{\pm \lam}^n$.

Let also $\mcalB_\bfb^-$ (resp. $\mcalB_\bfb^+$) correspond to the union of $\mcalB_l$ (resp. $\mcalB_r$) and of the boundary components $\mcalB_j$ such that $b_j = - \lam$ (resp. $b_j = + \lam$).

\begin{figure}[!ht]
	   \centering
	   \includegraphics[scale=0.8]{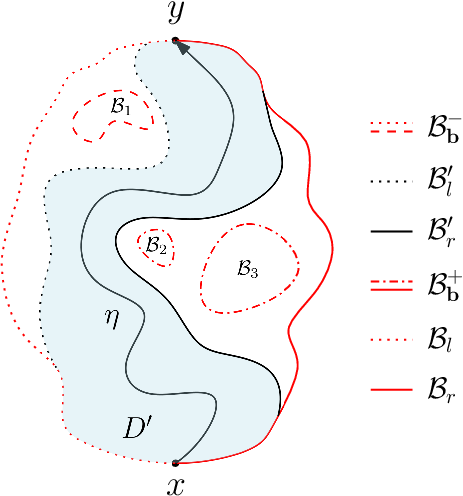}
        \caption{An example of a domain with the definitions above, together with a test domain $D'$ shaded in light blue. Here the curve $\eta$ depicted in the middle is in $\mcalA_\bfb$ for $\bfb = \inset{-\lam, + \lam, + \lam}$. In particular, the boundary values of $g_\bfb$ are $-\lam$ and $+\lam$ exactly on the parts of $\partial D$ that lie on the left and on the right of $\eta$, respectively.}
        \label{fig:definitions}
\end{figure}

\begin{figure}[!ht]
	   \centering
	   \includegraphics[scale=0.5]{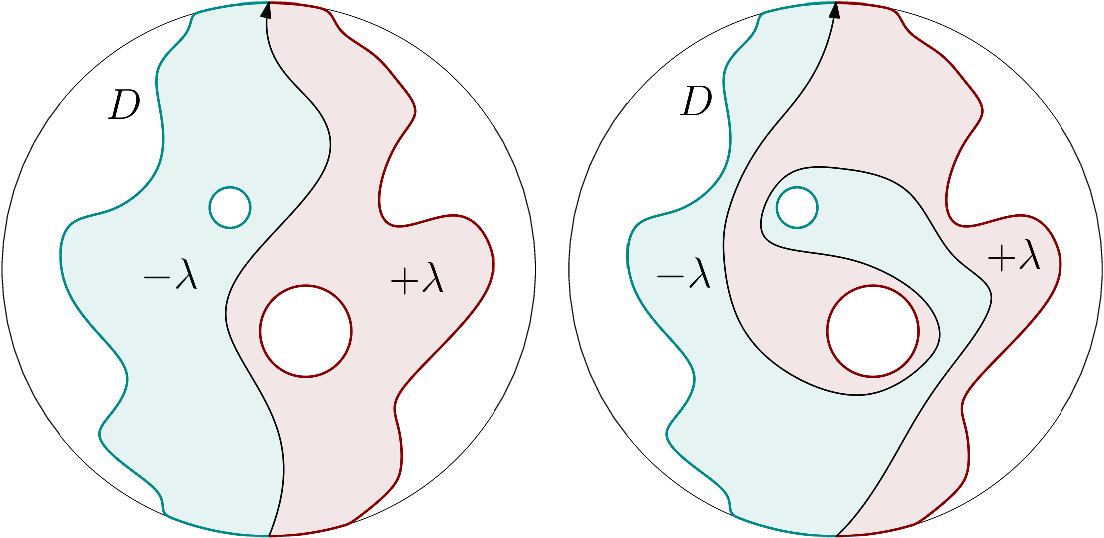}
        \caption{An example of two non path-homotopic curves in the event $\mcalA_{\inset{-\lam,+\lam}}$. In general, there are infinitely (countably) many homotopy classes of distinct curves corresponding to every event $\mcalA_\bfb$.}
        \label{fig:inequivalent_configs}
\end{figure}

Note that the events $\mcalA_\bfb$ do not quite fix the homotopy class of the curve, see Figure~\ref{fig:inequivalent_configs}. However, for $D'$ a test domain relative to $D$, there is a unique $\bfb \in \inset{\pm \lam}^n$ such that $\inset{\eta \in D'} \subset \inset{\eta \in \mcalA_\bfb}$.

We define $\mbbP_D^{g_\bfb}$  as the law of the generalised level line of a GFF $\Phi_D^{g_\bfb}$ in $D$ with boundary conditions given by $g_\bfb$, the harmonic function with boundary values $-\lam$ on $\mcalB_\bfb^-$, $+\lam$ on $\mcalB_\bfb^+$ (the dependency in $x,y$ is implicit). The important picture to keep in mind is that on the event $\mcalA_\bfb$ and conditionally on $\eta$ the level-line of the GFF, the latter is equal in law to a GFF in $D \setminus \eta$ with piecewise constant mean: $-\lam$ on the left of $\eta$, $+\lam$ on its right. That would not be true if $\eta \notin \mcalA_\bfb$.

\begin{theorem}\label{thm:noncrossing}
    Let $(D,x,y)$ be a non-crossing admissible domain. The measure $\nu_D$ on simple curves from $x$ to $y$ in $D$ defined as
    \begin{equation}\label{eq:defnoncrossingsle4measure}
        \nu_D := \sum_{\bfb \in \inset{\pm \lam}^n} \ind{\eta \in \mcalA_\bfb}\mbbP_D^{g_\bfb} \cdot Z_{D,\bfb},
    \end{equation}
    where
    \begin{equation}\label{eq:defz4}
        Z_{D,\bfb} = \exp(- \inv{2} \normreg{g_\bfb}^2)
    \end{equation}
    is the conformal restriction measure of $\sle_4$ in $(D,x,y)$.
\end{theorem}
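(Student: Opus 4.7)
The plan is to identify $\nu_D$ with the (unique) conformal restriction $\sle_4$ on $(D,x,y)$ by verifying Equation~\eqref{eq:restriction} against smooth test domains $D' \subset D$; as noted right after Definition~\ref{def:sle}, this check is by itself sufficient (modulo fixing the multiplicative constant implicit in \eqref{eq:dubproof} so that $Z_{\Hp,\bfb=\emptyset}=1$). For $\ka=4$ one has $\cc=1$ and $h=1/4$, so the target identity reads
\[
\frac{d\nu_D \restr{\{\eta \subset D'\}}}{d\nu_{D'}}(\eta) \;=\; \ind{\eta \subset D'} \exp\!\Bigl(-\tfrac{1}{2}\, m_D(\eta, D \setminus D')\Bigr),
\]
where $\nu_{D'} = H_{\partial D'}(x,y)^{1/4}\,\mbbP_{D'}^{g_{D'}}$ is the simply-connected $\sle_4$ measure in the test domain.

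Fix a smooth test domain $D'$ and set $\de := \partial D' \cap D$, a smooth (possibly disconnected) crosscut in $D$. I would first make a topological reduction: on $\{\eta \subset D'\}$ every inner boundary component $\mcalB_j$ lies in a prescribed connected component of $D \setminus D'$, and therefore on a prescribed side of any such $\eta$. Hence there is a unique $\bfb^* = \bfb^*(D') \in \{\pm\lam\}^n$ with $\{\eta \subset D'\} \subset \mcalA_{\bfb^*}$, and the mixture defining $\nu_D$ collapses on this event to $Z_{D,\bfb^*}\,\mbbP_D^{g_{\bfb^*}}\restr{\{\eta \subset D'\}}$. Using next the Markov property of the GFF along $\de$ and the measurability of the level line as a function of the field, I would write $\Phi_D^{g_{\bfb^*}} = v_\xi^{D'} + \Phi_{D'}^{0}$ inside $D'$, where $\xi := \Phi_D^{g_{\bfb^*}}\restr{\de}$ and $v_\xi^{D'}$ is the harmonic extension in $D'$ of the data ($g_{D'}$ on $\partial D' \cap \partial D$, $\xi$ on $\de$); on $\{\eta \subset D'\}$, $\eta$ is then the generalised level line of $\Phi_{D'}^{v_\xi^{D'}}$.

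For each fixed $\xi$, Proposition~\ref{prop:computderiv}---a boundary-value variant of Lemma~\ref{lem:rnderivativecle4}, needed because the shift $f_\xi := v_\xi^{D'} - g_{D'}$ is harmonic in $D'$ rather than in $H_0^1(D')$---would provide the Radon-Nikodym derivative of the conditional law of $\eta$ against the standard $\sle_4$ measure $\nu_{D'}^\#(x,y)$. Using that $h_\eta$ is piecewise $\pm\lam$, the Cameron-Martin shift term $(h_\eta, f_\xi)_{\nabla}$ becomes a linear functional of $\xi$ with $\eta$-measurable coefficients, while the projection-norm correction becomes a quadratic form in $\xi$. I would then average this RN derivative over $\xi$. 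Under $\mbbP_D^{g_{\bfb^*}}$, the trace $\xi$ on $\de$ is Gaussian with covariance kernel $G_D\restr{\de \times \de}$ and mean $g_{\bfb^*}\restr{\de}$, so the integration is exactly the setting of Lemma~\ref{lem:dub} and produces a Fredholm determinant multiplied by an exponential of a quadratic expression in the mean.

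The Fredholm determinant should collapse, via the operator identities \eqref{eq:decompgreen}--\eqref{eq:decompnoassump} of Lemma~\ref{lem:idloops} and Proposition~\ref{prop:detloopsoup}, to $\exp(-\tfrac12 m_D(\eta, D \setminus D'))$. The exponential-in-the-mean factor---which encodes the bias of $\xi$ towards $g_{\bfb^*}\restr{\de}$---should reorganise, via the regularised Dirichlet-energy formalism of Section~\ref{subsec:dirichlet} and Equation~\eqref{eq:dubproof}, into the ratio $H_{\partial D'}(x,y)^{1/4}/Z_{D,\bfb^*}$, precisely cancelling the prefactor $Z_{D,\bfb^*}$ from the topological reduction and delivering the normalising mass of $\nu_{D'}$. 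Conformal covariance~\eqref{eq:confcov} is then inherited from the conformal invariance of the GFF, its generalised level lines, and the Brownian loop measure. The main obstacle is this final bookkeeping step: the two outputs of Lemma~\ref{lem:dub} have to be matched, in precisely the right combination, with the loop-measure identity and with the Dirichlet-energy/extremal-distance identity, which demands applying Lemma~\ref{lem:idloops} in exactly the right order and keeping careful track of the logarithmic renormalisation of $\normreg{\cdot}^2$ near the target points $x,y$ (where $\eta$, $\de$ and $\mcalB$ meet) so that no divergence is left unaccounted for.
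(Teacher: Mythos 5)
Your plan is correct in outline and runs on the same engine as the paper's proof (Markov decomposition of the GFF across the crosscut, a fixed-$\xi$ Radon--Nikodym computation in the spirit of Proposition~\ref{prop:computderiv}, Gaussian integration via Lemma~\ref{lem:dub}, identification of the Fredholm determinant with the loop mass through Lemma~\ref{lem:idloops} and Proposition~\ref{prop:detloopsoup}, and the Dirichlet-energy bookkeeping of Section~\ref{subsec:dirichlet}), but it is organised differently in two respects. First, the paper does not check the restriction property in one shot against the simply connected test domain: it inducts on the number of inner boundary components, at each step peeling off only the part of $D$ lying on one side of the curve, precisely so that the crosscut carrying the random boundary data consists of a single piece $\mcalB_r'$. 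Your direct computation with the full crosscut $\de=\mcalB_l'\cup\mcalB_r'$ forces the quadratic form in $\xi$ to carry cross terms of the type $(H_{\partial D'})_{\mcalB_l'}^{\mcalB_r'}$, so the statement you actually need is not Proposition~\ref{prop:computderiv} as stated but its two-boundary analogue --- the paper performs exactly this kind of two-sided computation in the crossing case (Proposition~\ref{prop:computderivcrossing}), so your route is feasible, but it is the ``more lengthy and involved'' version that the induction is designed to avoid, and it is the one genuinely new computation your outline leaves implicit. Second, you disintegrate in the forward direction, averaging $\di \mbbP_{D'}^{\xi}/\di \nu_{D'}^{\#}(x,y)$ over the \emph{unconditional} law $\mcalN(g_{\bfb^*}\restr{\de}, G_D\restr{\de\times\de})$ of the trace, whereas the paper computes the reciprocal derivative $\di \mbbP_{D'}^{g_\bfb}/\di \mbbP_{D}^{g_\bfb}$ and averages over the law of $\xi$ \emph{conditionally on} $\eta$, which is $\mcalN(0, G_{D\setminus\eta}\restr{\mcalB_r'})$ (Lemma~\ref{lem:rnderivativedisintegrationlemma}). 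Both disintegrations are legitimate, but they are not interchangeable: the covariance fed into Lemma~\ref{lem:dub} is $G_D$ in your version and $G_{D\setminus\eta}$ in the paper's, so the resulting determinants are $\det_F(I+(G_D)_{\de}^{\de}A)^{-1/2}$ versus $\det_F(I-(G_{D\setminus\eta})_{\de}^{\de}A)^{-1/2}$ with $A$ the excursion-difference operator, and these agree only via the identity $(G_{D\setminus\eta})(I+A\,G_D)=G_D$ implicit in Lemma~\ref{lem:idloops}. With that caveat your determinant does collapse to $\exp(-\tfrac12 m_D(\eta,D\setminus D'))$ and the mean term to $Z_{D'}/Z_{D,\bfb^*}$, so the cancellation you describe is the right one; what each approach buys is a trade-off between a cleaner single-boundary computation repeated inductively (the paper) and a single heavier two-boundary computation with no induction (your route).
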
 

An immediate corollary is the finiteness of the partition function, with an explicit expression.

\begin{corollary}\label{cor:partitionfunctionnoncrossing4}
    The partition function of $\sle_4$ in a non-crossing admissible domain $(D,x,y)$ can be expressed as
    \begin{align*}
        Z_D(x,y):= \sum_{\bfb \in \inset{\pm \lam}^n}\exp(-\inv{2} \normreg{g_\bfb}^2) \cdot \mbbP_D^{g_\bfb}(\mcalA_\bfb)
    \end{align*}
    and it is finite.
\end{corollary}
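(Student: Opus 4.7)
The plan is to deduce the corollary immediately from Theorem~\ref{thm:noncrossing} by taking total masses on both sides of Equation~\eqref{eq:defnoncrossingsle4measure}. By Definition~\ref{def:sle}, the partition function $Z_D(x,y)$ is by construction the total mass $\norm{\nu_D(x,y)}$ of the conformal restriction $\sle_4$ measure. Since the events $\mcalA_\bfb$, $\bfb \in \inset{\pm\lam}^n$, partition the set of simple curves from $x$ to $y$ in $D$ and each $\mbbP_D^{g_\bfb}$ is a probability measure on such curves, integrating the formula
\[
\nu_D \;=\; \sum_{\bfb \in \inset{\pm \lam}^n} \ind{\eta \in \mcalA_\bfb}\,\mbbP_D^{g_\bfb}\cdot Z_{D,\bfb}
\]
over all curves yields $Z_D(x,y) = \sum_\bfb Z_{D,\bfb}\,\mbbP_D^{g_\bfb}(\mcalA_\bfb)$, and substituting the definition~\eqref{eq:defz4} of $Z_{D,\bfb}$ gives exactly the claimed expression.

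To establish finiteness, I would make three simple observations. First, the index set $\inset{\pm\lam}^n$ has cardinality $2^n$, so the sum has finitely many terms. Second, each factor $\mbbP_D^{g_\bfb}(\mcalA_\bfb)$ lies in $[0,1]$, being a probability. Third, each regularized Dirichlet energy $\normreg{g_\bfb}^2$ is finite: $g_\bfb$ is a bounded harmonic function on $D$ whose only boundary discontinuities (of size $2\lam$) occur at the two target points $x$ and $y$, so the divergence of $\int_D \abs{\nabla g_\bfb}^2$ is precisely the logarithmic divergence accounted for by the renormalization procedure introduced in Section~\ref{subsec:dirichlet} (which, as noted at the end of that section, extends verbatim to the multiply connected setting). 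Hence each $Z_{D,\bfb}=\exp(-\tfrac{1}{2}\normreg{g_\bfb}^2)$ is a strictly positive real number and the total sum is finite.

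I do not anticipate any real obstacle beyond what is already packaged into Theorem~\ref{thm:noncrossing}: once the explicit form of $\nu_D$ is granted, the corollary follows by inspection. If one further wished to obtain strict positivity of $Z_D(x,y)$ (not demanded by the statement but natural), it would suffice to exhibit one $\bfb \in \inset{\pm\lam}^n$ for which $\mbbP_D^{g_\bfb}(\mcalA_\bfb)>0$, which is immediate from the construction of generalised level lines of the GFF with piecewise constant boundary data.
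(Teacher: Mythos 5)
Your proposal is correct and coincides with the paper's own (essentially one-line) argument: the corollary is obtained by taking total masses in Equation~\eqref{eq:defnoncrossingsle4measure}, with finiteness following from the fact that the sum has $2^n$ terms, each $\mbbP_D^{g_\bfb}(\mcalA_\bfb)\in[0,1]$, and each regularised Dirichlet energy $\normreg{g_\bfb}^2$ is finite. Nothing further is needed.
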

Alternatively, Equation~\eqref{eq:dubproof} can be used to write
\[
Z_D(x,y)=\sum_{\bfb \in \inset{\pm \lam}^n}\exp(\inv{2} (\norm{g_{D''}}_{\nabla(D''), \reg}^2-\norm{g_\bfb}_{\nabla(D), \reg}^2)) H_{D''}(x,y)^{\sinv{4}} \mbbP_D^{g_\bfb}(\mcalA_\bfb)
\]
from which conformal covariance is perhaps best read, given that the the probability term and the difference of Dirichlet energies is conformally invariant (see Section~\ref{subsec:dirichlet}) and that the boundary Poisson kernel is conformally covariant as in Equation~\eqref{eq:confcovpoisson}.

\subsubsection{Proof of the theorem}
Before going to the proof of Theorem~\ref{thm:noncrossing}, we make some simple observations about a GFF in $D$ and its coupling with a level-line. We fix $\bfb \in \inset{\pm \lam}^n$. Let $(\Phi_D, \eta)$ be a local set coupling with $\Phi_D:= \Phi_D^{g_\bfb}$ a GFF in $D$ with boundary values given by $g_\bfb$. On the event $\mcalA_\bfb$, the harmonic part $h_{\eta}$ is two-valued and reads
\begin{equation}\label{eq:hnt}
h_{\eta}(z) = (- \lam + 2 \lam \ind{(D\setminus\eta)_{\text{right}}}(z)).
\end{equation}
In other words, conditionally on $\eta$, $\Phi_D$ is equal in law to the sum of two independent GFFs $\Phi_{(D\setminus\eta)_{\text{left}}}^0 - \lam, \Phi_{(D\setminus\eta)_{\text{right}}}^0 + \lam$ on the left (resp. right) side of $\eta$ in $D$.
We write, conditionally on $\eta$ and on the event $\mcalA_\bfb$,
\[
\Phi_D = \Phi_{D\setminus\eta}^{0} + h_\eta,
\]
where $\Phi_{D\setminus\eta}^{0}$ should be understood as $\Phi_{(D\setminus\eta)_{\text{left}}}^0 + \Phi_{(D\setminus\eta)_{\text{right}}}^0$.
Let us now consider $D' \subset D$ a test domain with $\inset{\eta \subset D'} \subset \mcalA_\bfb$ (see again Fig~\ref{fig:definitions}). Using the standard Markov property on $\Phi_{D\setminus\eta}^{0}$ on the event that $\eta \subset D'$, we can further expand:
\[
\Phi_D\restr{D'} = \Phi_{D'\setminus\eta}^{0} + u_{\mcalB_l'\cup \mcalB_r'}^{D'\setminus \eta} + h_\eta\restr{D'},
\]
where $u_{\mcalB'}^{D'\setminus \eta}$ is a harmonic function in $D'\setminus\eta$ corresponding to the harmonic extension of the restriction $\xi_l$ (resp. $\xi_r$) of $\Phi_{D \setminus \eta}^0$ on $\mcalB_l'$ (resp. $\mcalB_r'$), and of zero on the other parts of the boundary (including $\eta$).
In other words, since $\xi=(\xi_l,\xi_r)$ is distributed like a centred Gaussian process on $\mcalB_l' \cup \mcalB_r'$ whose covariance matrix is diagonal with elements $(G_{D \setminus \eta})\restr{\mcalB_l' \times \mcalB_l'}$ and $(G_{D \setminus \eta})\restr{\mcalB_r' \times \mcalB_r'}$, conditionally on $\eta$ still and on the event $\inset{\eta \subset D'} \subset \mcalA_\bfb$, we can alternatively write $\Phi_D \restr{D'}$ as $\Phi_{D'\setminus \eta}^0 + \bar h_\eta$, with $\bar h_\eta$ a random harmonic function in $D'\setminus \eta$ with boundary values
\begin{equation*}
\bar h_\eta = \begin{dcases}
    - \lam                                                           & : \text{ on the left-side of } \eta \\
    + \lam                                                          & : \text{ on the right-side of } \eta \\
    \xi_l \sim \mcalN(- \lam, G_{D \setminus \eta}\restr{\mcalB_l'}) & : \text{ on } \mcalB_l' \\
    \xi_r \sim \mcalN(+ \lam, G_{D \setminus \eta}\restr{\mcalB_r'}) & : \text{ on } \mcalB_r'
\end{dcases}.
\end{equation*}

We consider, similarly, another local set coupling $(\tilde \Phi_{D'}, \tilde \eta)$ in $D'$ with $\tilde \Phi_{D'} = \tilde \Phi_{D'}^{g_{D'}}$, so that conditionally on $\tilde \eta$ we can write $\tilde \Phi_{D'} = \tilde \Phi_{D' \setminus \tilde \eta}^{0} + h_{\tilde \eta}$, with
\[
h_{\tilde \eta}(z) = (- \lam + 2 \lam \ind{(D'\setminus \tilde \eta)_{\text{right}}}(z)).
\]
For computations, it will sometimes be easier to work with a GFF with zero-boundary conditions, coupled with a level-line such that the resulting harmonic part $h_{\tilde \eta}$ is no longer only a two-valued function, but includes a subtraction by $g_{D'}$. The marginal law of $\tilde \eta$ in this coupling will be denoted $\mbbP_{D'}^{g_{D'}}$, and in this case is equal to the $\sle_4$ measure $\nu_D^\#(x,y)$ by Theorem~\ref{thm:sle4gffsimplyconnected}.

On the other hand, the marginal law of $\eta$ in the coupling $(\eta, \Phi_D)$ as presented above on the event $\inset{\eta \subset D'}$ and \textit{given the values of} $\xi$ is denoted $\mbbP_{D'}^{g_{\bfb}, \xi}$. It therefore corresponds to the law of the level-line of a GFF in $D'$ whose mean is the harmonic extension of $\xi$ on $\mcalB_l' \cup \mcalB_r'$ and $-\lam$ (resp. $+\lam$) on $\mcalB_l\setminus \mcalB_l'$ (resp. $\mcalB_r\setminus \mcalB_r'$).\\

We now turn to the proof of Theorem~\ref{thm:noncrossing}, which essentially consists of leveraging the properties of the GFF to compute the Radon-Nikodym derivatives for the laws of its level-lines.
The key computation is contained in Proposition~\ref{prop:computderiv}. We start with a simple measure-theoretic lemma.

\begin{lemma}\label{lem:rnderivativedisintegrationlemma}
    Let $X, Y_1,Y_2$ be random variables on the same probability space, taking values in a Polish space $E$, with associated distributions $\mbbP_X, \mbbP_{Y_1}, \mbbP_{Y_2}$ respectively. Denote also by $\mbbP_{Y_1 | Y_2 = y_2}$ a version of the conditional law of $Y_1$ on ${Y_2 = y_2}$ (similarly for $\mbbP_{Y_2 | Y_1 = x}$) and assume that $\mbbP_X \ll \mbbP_{Y_1 | Y_2 = y_2}$, for $\mbbP_{Y_2}$-almost every $y_2 \in E$.
    Then the following equality between Radon-Nikodym derivatives holds $\mbbP_{X}$-a.s.
    \[
        \frac{\di \mbbP_X}{\di \mbbP_{Y_1}} (x) = \Eb{Y_2 | Y_1 = x}{\frac{\di \mbbP_X}{\di \mbbP_{Y_1 | Y_2 = y}}(x) \bigg|_{y=Y_2}}.
    \]
\end{lemma}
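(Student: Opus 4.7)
The plan is to verify the claimed identity by duality: show that for every bounded measurable $f:E\to\R$,
\[
\int f(x)\,\di\mbbP_X(x)
=\int f(x)\,\Eb{Y_2|Y_1=x}{D(x,Y_2)}\,\di \mbbP_{Y_1}(x),
\]
where $D(x,y):=\di \mbbP_X/\di \mbbP_{Y_1|Y_2=y}(x)$. Since $\mbbP_X \ll \mbbP_{Y_1}$ (a consequence of the disintegration in the next step), this characterises the Radon-Nikodym derivative a.s., which is exactly the conclusion. Testing against $f$ is the natural device because the right-hand side of the statement is defined $\mbbP_X$-a.s.\ only after recalling $\mbbP_X \ll \mbbP_{Y_1}$, so it is cleaner to prove the integrated identity first and read off the a.s.\ identity.

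First, I would fix measurable regular versions of the conditional laws, which exist since $E$ is Polish, and use them to choose a jointly measurable version of $(x,y)\mapsto D(x,y)$ (e.g. via a countable generating algebra and martingale/Besicovitch-style selection). This step is routine but is where one pays for working with abstract conditional laws; I would cite a standard reference such as Kallenberg rather than reproducing the argument.

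The core calculation is then a short Fubini manipulation. By the tower property and the defining property of $\Ef{\cdot|Y_1}{\cdot}$,
\[
\int f(x)\,\Eb{Y_2|Y_1=x}{D(x,Y_2)}\,\di \mbbP_{Y_1}(x)
= \E{f(Y_1)\,D(Y_1,Y_2)}.
\]
Disintegrating the joint law of $(Y_1,Y_2)$ through $Y_2$ (again using Polishness) yields
\[
\E{f(Y_1)\,D(Y_1,Y_2)}
=\int\!\paren{\int f(y_1)\,D(y_1,y_2)\,\di\mbbP_{Y_1|Y_2=y_2}(y_1)}\di\mbbP_{Y_2}(y_2).
\]
By the very definition of $D(\cdot,y_2)$ as the Radon-Nikodym derivative of $\mbbP_X$ with respect to $\mbbP_{Y_1|Y_2=y_2}$, the inner integral equals $\int f\,\di\mbbP_X$ for $\mbbP_{Y_2}$-a.e. $y_2$, giving $\int f\,\di\mbbP_X$ after integrating over $y_2$. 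Taking $f=\ind{A}$ for measurable $A$ shows the two measures agree, whence the identity holds $\mbbP_X$-almost surely.

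The only real obstacle is the joint measurability of $D$; everything else is an application of Fubini and the defining property of Radon-Nikodym derivatives. In the application below, the underlying space is Polish and the conditional laws arise from explicit Gaussian constructions, so this selection issue is harmless and I would dispatch it with a reference.
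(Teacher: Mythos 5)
Your proof is correct and follows essentially the same route as the paper: both arguments test against bounded functions $f$, disintegrate the joint law of $(Y_1,Y_2)$ in both directions (your tower-property step is the paper's Bayes' formula step), and apply the defining property of the Radon--Nikodym derivative $D(\cdot,y_2)$ to collapse the inner integral to $\int f\,\di\mbbP_X$. Your extra remark on choosing a jointly measurable version of $D(x,y)$ is a legitimate technical point that the paper leaves implicit, but it does not change the substance of the argument.
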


\begin{proof}
    See Appendix~\ref{app:prooflemma}.
\end{proof}

\begin{proof}[Proof of Theorem~\ref{thm:noncrossing}]
    We proceed by induction on $n$, the number of connected components of $\mcalB$ that do not contain $x$ and $y$ \footnote{One might wonder why we use induction and do not do the computations directly. The reason is to avoid dealing with two boundaries at once as it makes some of the computations presented below more lengthy and involved.}. For clarity we temporarily denote the measures in the statement of the theorem as $\tilde \nu_D$, as we have not yet proved that they correspond to the $\sle_4$ measures.
    
    When $n=0$, Equation~\eqref{eq:defnoncrossingsle4measure} defines $\tilde \nu_D$ as the law of the level line of a GFF in $D$ with boundary conditions $-\lam$ on the counterclockwise arc $y \to x$, $+\lam$ on the other, which corresponds to $\sle_4$. Furthermore, the expression of $Z_D$ in Equation~\eqref{eq:defz4} agrees with that of the partition function of $\sle_4$ in simply connected domains since from Equation~\eqref{eq:dubproof} it follows that
    \begin{equation}\label{eq:idZ4dub}
        \exp(-\inv{2} \normreg{g_{D}}^2)=H_D(x,y)^{\sfrac{1}{4}}.
    \end{equation}
    
    Let now $n\geq 1$ and $D'' \subset D$ be a test domain: because $D''$ is simply connected by assumption, there exists $\bfb'=(b_1',\dots,b_n') \in \inset{\pm \lam}^n$ such that $\inset{\eta \subset D''} \subset \mcalA_{\bfb'}$. Up to relabeling the components of $\mcalB$ we may assume that $\bfb' =\inset{-\lam, \dots, -\lam,+\lam,\dots,+\lam}$ with $1\leq k \leq n$ such that $b_k'=-\lam, b_{k+1}'=+\lam$ (assuming w.l.o.g. that $k \geq 1$, otherwise reasoning on the right of $D''$ instead), so that if $D'$ is the union of $D''$ and the component of $D \setminus D''$ on the left of $\eta$, we have by induction that
    \[
        \frac{\di \nu_{D''}}{\di \tilde \nu_{D'}}(\eta) = \ind{\eta \subset D''} \exp(\inv{2}m_{D'}(\eta, D' \setminus D'')).
    \]

    \begin{figure}[!ht]
	   \centering
	   \includegraphics[scale=0.5]{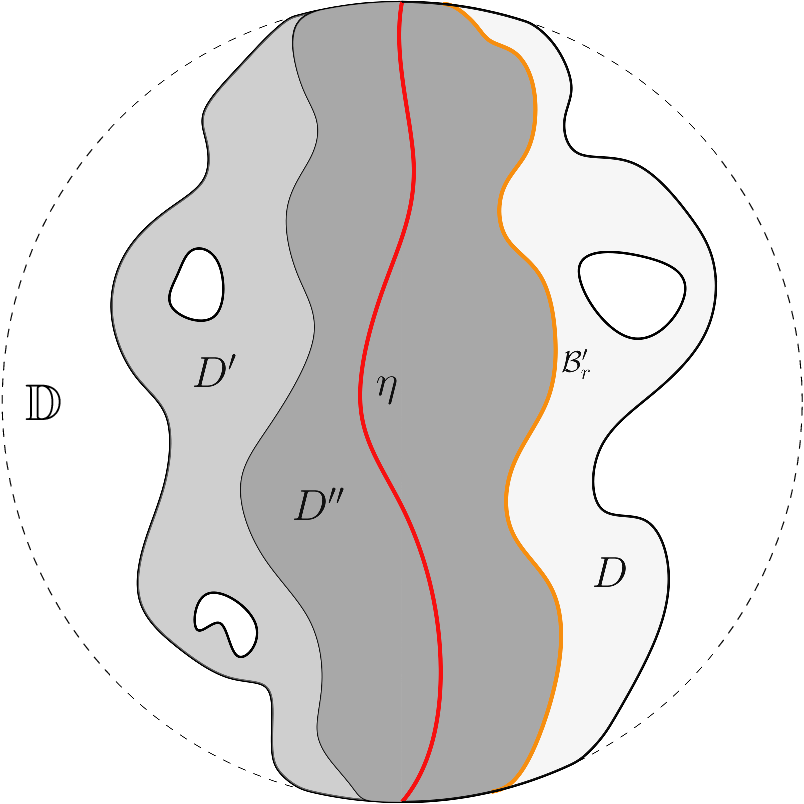}
        \caption{$D''$ corresponds to the darkest shade, $D'$ to the two darker shades (left of orange curve $\mcalB_r'$) and $D$ to the three shades.}
        \label{fig:three_domains}
    \end{figure}

    We have to show that $\tilde \nu_{D'}$ is absolutely continuous w.r.t. $\tilde \nu_D$, with Radon-Nikodym derivative given by
    \begin{equation}\label{eq:rnderiv4tocompute}
        \frac{\di \tilde \nu_{D'}}{\di \tilde\nu_{D}}(\eta) = \sum_{\substack{\bfb = (\bfb_k, +\lam, \dots, +\lam) \\ \bfb_k \in \inset{\pm \lam}^{k}}} \ind{\eta \in \mcalA_\bfb} \frac{\di \mbbP_{D'}^{g_{\bfb_k}}}{\di \mbbP_{D}^{g_{\bfb}}}(\eta) \frac{Z_{D',\bfb_k}}{Z_{D,\bfb}}
        = \sum_{\substack{\bfb = (\bfb_k, +\lam, \dots, +\lam) \\ \bfb_k \in \inset{\pm \lam}^{k}}} \ind{\eta \in \mcalA_{\bfb}, \eta \subset D'} \exp(\inv{2}m_{D}(\eta, D \setminus D')).
    \end{equation}
    Indeed, by the additive property of the loop measure we would then obtain
    \begin{align*}
        \frac{\di \nu_{D''}}{\di \tilde\nu_{D}}(\eta) = \frac{\di \nu_{D''}}{\di \tilde\nu_{D'}}(\eta)\frac{\di \tilde \nu_{D'}}{\di \tilde\nu_{D}}(\eta) &= \ind{\eta \subset D'} \exp(\inv{2}m_{D'}(\eta, D' \setminus D'') + \inv{2} m_{D}(\eta, D \setminus D')) \\
        &= \ind{\eta \subset D'} \exp(\inv{2}m_{D}(\eta, D \setminus D'')).
    \end{align*}
    To carry out this computation, we use Lemma~\ref{lem:rnderivativedisintegrationlemma} with $\mbbP_X=\mbbP_{D'}^{g_{\bfb}}$ (now abusing notation to drop the superscript $\bfb_k$), $\mbbP_{Y_1}=\mbbP_{D}^{g_{\bfb}}$, and $\mbbP_{Y_2}$ corresponding to the law of $\xi := T_{\mcalB_r'}\Phi_D^0$, the restriction of the GFF to $\mcalB_r'$. As noted earlier, conditionally on $\eta\sim \mbbP_D^{g_{\bfb}}$ and $\inset{\eta \subset D'}$, we have that $\xi \sim \mcalN(0, G_{D \setminus \eta}\restr{\mcalB_r'})$.
    We can then write
    \[
        \frac{\di \mbbP_{D'}^{g_{\bfb}}}{\di \mbbP_{D}^{g_{\bfb}}} = \Ef{\mcalN(0, G_{D \setminus \eta}\restr{\mcalB_r'})}{\frac{\di \mbbP_{D'}^{g_{\bfb}}}{\di \mbbP_{D'}^{g_\bfb, \xi}}(\eta)}.
    \]
    The fact that the term in the expectation is well-defined and its precise value are provided by the following proposition:

    \begin{proposition}\label{prop:computderiv}
    For all fixed $\xi \in H^{-1}(\mcalB_r')$, it holds that $\di \mbbP_{D'}^{g_{\bfb}} \ll \di \mbbP_{D'}^{g_\bfb, \xi}$, and we have explicitly that
    \begin{equation}\label{eq:computderiv}
        \frac{\di \mbbP_{D'}^{g_{\bfb}}}{\di \mbbP_{D'}^{g_\bfb, \xi}}(\eta) = \ind{\eta \in \mcalA_\bfb, \eta \subset D'} \exp(\inv{2} (\xi, (H_{\partial D'} - H_{\partial (D' \setminus \eta)})_{\mcalB_r'}^{\mcalB_r'} \xi)_{\mcalB_r'} -2\lam ((H_{\partial D'})_{\mcalB_\bfb^-}^{\mcalB_r'} 1_{\mcalB_\bfb^-}, \xi)_{\mcalB_r'}).
    \end{equation}
\end{proposition}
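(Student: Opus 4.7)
The strategy is to invoke the local-set Cameron--Martin formula (Lemma~\ref{lem:rnderivativecle4}) to compare the two GFFs defining the level-line laws, then identify the resulting inner products via Green's identity. This essentially adapts Proposition~3.3 of \cite{aru_extremal_2022} to the SLE level-line setting. Set $\Phi_2 := \Phi_{D'}^0 + g_{D'}^\xi$ so that its generalised level line has law $\mbbP_{D'}^{g_\bfb,\xi}$; then $\Phi_1 := \Phi_2 + f$, with $f := g_\bfb|_{D'} - g_{D'}^\xi$, has mean $g_\bfb$ and its level line has law $\mbbP_{D'}^{g_\bfb}$. The shift $f$ is harmonic in $D'$ with vanishing boundary trace on $\mcalB_\bfb^- \cup (xy)$ and boundary trace $g_\bfb|_{\mcalB_r'} - \xi$ on $\mcalB_r'$.

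The main obstacle is that $f \notin H_0^1(D')$ because of its nonzero trace on $\mcalB_r'$, so Lemma~\ref{lem:rnderivativecle4} does not apply directly. The plan is to first reduce to smooth $\xi$ by a density argument (using continuity of both sides of~\eqref{eq:computderiv} in $\xi$), then approximate $f$ by $f_\delta \in H_0^1(D')$ obtained via a smooth cutoff tapering $f$ to zero in a $\delta$-neighbourhood of $\mcalB_r'$. Applying Lemma~\ref{lem:rnderivativecle4} with the shift $f_\delta$ and passing to $\delta \to 0$ should yield the desired formula. On the event $\{\eta \in \mcalA_\bfb, \eta \subset D'\}$ one has $\dist(\eta, \mcalB_r') > 0$, so the limit can be controlled away from $\mcalB_r'$, while the divergences localised near $\mcalB_r'$ will cancel in the appropriate combinations.

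Given the Cameron--Martin density $\exp((\Phi_\eta, f)_\nabla - \tfrac{1}{2}(f_\eta, f_\eta)_\nabla)$ with $f_\eta = f$ (since $f$ is already harmonic in $D' \setminus \eta$), Green's identity converts both inner products into boundary integrals over $\partial(D' \setminus \eta)$. Plugging in the boundary values of $\Phi_\eta$ ($\pm \lam$ on the sides of $\eta$, $-\lam$ on $\mcalB_\bfb^-$, $+\lam$ on $(xy)$, $\xi$ on $\mcalB_r'$) and expressing $\partial_\nu f$ via the boundary Poisson kernel applied to $g_\bfb|_{\mcalB_r'} - \xi$, the various contributions should reorganise into: (i) a linear-in-$\xi$ term, in which contributions from $\mcalB_\bfb^-$ and $(xy)$ combine via the harmonic-measure identity $\int_{\partial D'} H_{\partial D'}(\cdot, z) dz = 0$ to yield $-2\lam ((H_{\partial D'})_{\mcalB_\bfb^-}^{\mcalB_r'} 1_{\mcalB_\bfb^-}, \xi)_{\mcalB_r'}$; (ii) a quadratic-in-$\xi$ term, in which the naive divergent $(\xi, H_{\partial D'} \xi)_{\mcalB_r'}$ contribution from $(\Phi_\eta, f)_\nabla$ is partly cancelled by a divergent piece of $\|f\|_\nabla^2$, leaving the finite $(\xi, (H_{\partial D'} - H_{\partial(D' \setminus \eta)}) \xi)_{\mcalB_r'}$ justified by the bounded-operator estimates of Section~\ref{subsec:poissonkernel}. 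The principal technical hurdle is performing this regularisation and cancellation rigorously, uniformly in $\delta$, so that only the finite difference of divergent quantities survives in the $\delta \to 0$ limit.
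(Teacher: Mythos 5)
Your proposal follows the same route as the paper: a Cameron--Martin comparison via Lemma~\ref{lem:rnderivativecle4}, a regularisation of the shift near $\mcalB_r'$ (the paper uses the harmonic truncation $u_\xi^\eps$, equal to the harmonic extension $u_\xi$ of $\xi$ at distance $\geq \eps$ from $\mcalB_r'$ and vanishing on $\partial D'$, rather than a multiplicative cutoff, but this is the same idea), identification of the level lines away from $\mcalB_r'$, and Green's identity. There is, however, a concrete error at the heart of your computation: the claim that $f_\eta = f$ because $f$ is harmonic in $D'\setminus\eta$. The projection in Lemma~\ref{lem:rnderivativecle4} is onto the subspace of $H_0^1(D')$ consisting of functions harmonic in $D'\setminus\eta$, so $f_\eta$ must \emph{in addition vanish on} $\partial D'$: it is the harmonic extension of $f\restr{\eta}$ to $D'\setminus\eta$ with zero outer boundary data (the paper's $(u_\xi^\eps)_\eta = u_\xi - v_\eta$), not $f$ itself. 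This is not cosmetic: it is precisely this projection, combined with the orthogonality $((u_\xi^\eps)_\eta, u_\xi)_{\nabla(D')}=0$, that makes the quadratic term finite and equal to $(\xi,(H_{\partial D'}-H_{\partial(D'\setminus\eta)})_{\mcalB_r'}^{\mcalB_r'}\xi)_{\mcalB_r'}$ with no renormalisation needed.

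Relatedly, the cancellation scheme in your item (ii) is not how the computation closes. In Lemma~\ref{lem:rnderivativecle4} the harmonic part $\Phi_\eta$ is that of the \emph{zero-boundary} field --- here $h_\eta - g_\bfb$, which vanishes on $\partial D'$ (the paper explicitly recentres for this reason) --- so the linear term equals $-(\xi,\partial_\nu h_\eta)_{\mcalB_r'}$, which is finite and genuinely linear in $\xi$; there is no divergent quadratic piece in it to cancel against $\norm{f}_{\nabla(D')}^2$. Assigning $\Phi_\eta$ the boundary value $\xi$ on $\mcalB_r'$, as your list of boundary data suggests, is what manufactures the divergent diagonal term $(\xi,(H_{\partial D'})_{\mcalB_r'}^{\mcalB_r'}\xi)$ that you then must cancel by hand against another divergent quantity; the paper's notation $(H_{\partial D'})_{\mcalB_r'}^{\mcalB_r'}$ is not even well defined as an operator, and the Dirichlet-to-Neumann map $\partial_\nu u_\xi$ on $\mcalB_r'$ is only a renormalised version of it, so carrying out your cancellation uniformly in $\delta$ is exactly the step your proposal defers and the one that would fail as written. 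Two smaller points: the identification of the level line of the cutoff field with that of the target field on $\{\dist(\eta,\mcalB_r')>\delta\}$ requires the local uniqueness of level lines (\cite[Lemma 15]{aru_bounded-type_2019}) and should be invoked explicitly; and the reduction to smooth $\xi$ by continuity of the level-line law in $\xi$ is itself nontrivial --- the paper only uses a density argument for the deterministic identity \eqref{eq:dirichletenergyxi}, keeping $\xi \in H^{-1}(\mcalB_r')$ general elsewhere.
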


We first pursue the proof of Theorem~\ref{thm:noncrossing} with this proposition in hand, and will turn to its proof just after. The computation of the expectation now relies on applying Lemma~\ref{lem:dub} to Equation~\eqref{eq:computderiv}. The determinant term reads\footnote{In this proof all occurrences of the Green's function $G_{D\setminus \eta}$ should be understood as the operator $(G_{D\setminus \eta})_{\mcalB_r'}^{\mcalB_r'}$ on $L^2(\mcalB_r')$, similarly for $G_{D'\setminus \eta}$ and $G_D$: we drop the subscripts and superscripts to lighten the notation.}
\begin{align*}
(\det_F(I_{\mcalB_r'}&-(G_{D \setminus \eta})^{1/2}(H_{\partial D'} - H_{\partial (D' \setminus \eta)})_{\mcalB_r'}^{\mcalB_r'} (G_{D \setminus \eta})^{1/2}))^{-1/2} \\
&= (\det_F(I_{\mcalB_r'}-(H_{\partial D'} - H_{\partial (D' \setminus \eta)})_{\mcalB_r'}^{\mcalB_r'} G_{D \setminus \eta}))^{-1/2} \\
&= \exp(\inv{2} m_D(\eta, \mcalB_r'))
\end{align*}
by Proposition~\ref{prop:detloopsoup} (note that this proposition implicitly contains the statement that the determinant is well-defined, and hence that the use of Lemma~\ref{lem:dub} is legitimate).

On the other hand, the term corresponding to the exponential in Lemma~\ref{lem:dub} can be rewritten as
\begin{align}
&\exp(\inv{2}\norm{2 \lam(I_{\mcalB_r'} - G_{D \setminus \eta}^{1/2}(H_{\partial D'} - H_{\partial (D' \setminus \eta)})_{\mcalB_r'}^{\mcalB_r'}G_{D' \setminus \eta}^{1/2})^{-1/2} G_{D \setminus \eta}^{1/2} (H_{\partial D'})_{\mcalB_\bfb^-}^{\mcalB_r'} 1_{\mcalB_\bfb^-}}^2) \nonumber\\
=& \exp(2 \lam^2 ((H_{\partial D'})_{\mcalB_\bfb^-}^{\mcalB_r'} 1_{\mcalB_\bfb^-},  (I_{\mcalB_r'} - (H_{\partial D'} - H_{\partial (D' \setminus \eta)})_{\mcalB_r'}^{\mcalB_r'}G_{D \setminus \eta})^{-1}G_{D \setminus \eta}  (H_{\partial D'})_{\mcalB_\bfb^-}^{\mcalB_r'} 1_{\mcalB_\bfb^-})_{\mcalB_r'} ) \nonumber\\
=& \exp(2 \lam^2 ((H_{\partial D'})_{\mcalB_\bfb^-}^{\mcalB_r'} 1_{\mcalB_\bfb^-}, G_{D}(H_{\partial D'})_{\mcalB_\bfb^-}^{\mcalB_r'} 1_{\mcalB_\bfb^-})_{\mcalB_r'} ) \nonumber\\
=& \exp(2 \lam^2 (1_{\mcalB_\bfb^-}, (H_{\partial D} - H_{\partial D'})_{\mcalB_\bfb^-}^{\mcalB_\bfb^-} 1_{\mcalB_\bfb^-})_{\mcalB_\bfb^-})\label{eq:computtermexplemma4},
\end{align}
where we used the identities in Lemma~\ref{lem:idloops}. This can finally be re-written as a difference of Dirichlet energies, as in Section~\ref{subsec:dirichlet}, so that we finally obtain 
\[
\frac{\di \mbbP_{D'}^{g_{\bfb}}}{\di \mbbP_{D}^{g_{\bfb}}} = \ind{\eta \in \mcalA_{\bfb}, \eta \subset D'} \exp(\frac{1}{2}m_{D}(\eta, D \setminus D')) \exp(-\inv{2} (\norm{g_{D, \bfb}}_{\nabla(D), \reg}^2 - \norm{g_{D', \bfb}}_{\nabla(D'), \reg}^2)).
\]
The exponential term in the equation above corresponds to $Z_{D, \bfb}/Z_{D''}$, which concludes the proof.
\end{proof}
We now turn to the proof of Proposition~\ref{prop:computderiv}.
\begin{proof}[Proof of Proposition~\ref{prop:computderiv}]
Consider $D'_\eps := \inset{x \in D': \dist(x,\mcalB_r') > \eps}$, $\mcalB'_\eps := \partial D'_\eps \setminus \partial D'$ and define $u_\xi^\eps$ as the unique harmonic function continuous on $\overline{D'}$ and harmonic on $D' \setminus \mcalB'_\eps$ with boundary values
\[
u_\xi^\eps = \begin{cases} u_\xi & \text{on }\mcalB'_\eps \\ 0 & \text{on } \partial D', \end{cases}
\]
where $u_\xi$ is the harmonic extension of $\xi$ to $D'$. Note that $u_\xi^\eps$ and $u_\xi$ coincide in $D'_\eps$. Let $\mbbQ$ be the law of $\Phi:=\Phi_{D'}^0+g_\bfb$ and $\tilde \mbbQ_\eps$ defined as
\begin{equation}\label{eq:rnderiveps}
\frac{\di \tilde \mbbQ_\eps}{\di \mathbb{Q}} = \exp((\Phi,u_\xi^\eps)_{\nabla(D')} - \inv{2}(u_\xi^\eps, u_\xi^\eps)_{\nabla(D')})
\end{equation}
so that $\tilde \Phi := \Phi - u_\xi^\eps$ is a zero-boundary GFF in $D'$ under $\tilde \mbbQ_\eps$, by Girsanov. Let $\eta, \tilde \eta$ be the level-lines for $\Phi, \tilde \Phi + u_\xi$ respectively.

Like in \cite[Proposition 3.3]{aru_extremal_2022}, we argue that the events $\inset{\dist(\eta, \mcalB_r')>\eps}, \inset{\dist(\tilde \eta, \mcalB_r')>\eps}$ are ($\tilde\mbbQ_\eps$-)a.s. equal, and that on these events $\eta$ and $\tilde \eta$ coincide. Whereas $(\eta, \Phi_\eta)$ is a local set under $\mbbQ$ by definition, $(\eta_\tau, \Phi_{\eta_\tau})$ is also a local set \cite[Lemma 3.12]{schramm_contour_2009}, where $\tau$ is the exit time of $\eta$ of $D_\eps'$, and $\Phi_{\eta_\tau}$ is the harmonic function in $D' \setminus \eta_\tau$ with boundary values given by $-\lam$ (resp. $+\lam$) on the left (resp. right) boundary of $\eta_\tau$, and coinciding with $g_\bfb$ on the other parts of $\partial D'$. Now by Lemma~\ref{lem:rnderivativecle4}, $(\eta_\tau, \tilde \Phi_{\eta_\tau})$ is also a local set under $\tilde \mbbQ_\eps$, with $\tilde \Phi_{\eta_\tau}= \Phi_{\eta_\tau} - (u_\xi^\eps)_{\eta_\tau}$. On the other hand, $(\tilde \eta_\tau, \tilde \Phi_{\eta_\tau})$, the level-line for $\tilde \Phi + u_\xi$, is also a local set under $\tilde \mbbQ_\eps$. Furthermore, the fields $\Phi$ and $\tilde \Phi + u_\xi$ coincide on $D_\eps'$. It follows from local uniqueness of level-lines (\cite[Lemma 15]{aru_bounded-type_2019}) that $\eta_\tau=\tilde\eta_\tau$ $\tilde\mbbQ_\eps$-almost surely. If $\tau=+\infty$, i.e. $\eta \subset D_\eps'$, then it is clear that $\tilde \eta \subset D_\eps'$ and $\eta=\tilde \eta$.

This implies that the law $\mbbP_{D'}^{g_\bfb, \xi}$ of the level line of $\Phi + u_\xi$ under $\tilde\mbbQ_\eps$, on the event that it stays at a distance $>\eps$ from $\mcalB_r'$, is absolutely continuous w.r.t. the law $\mbbP_{D'}$ of the level line of $\Phi$ under $\mbbQ$ on the event that it too stays at a distance $>\eps$ from $\mcalB_r'$, and the Radon-Nikodym derivative is given the conditional expectation of the RHS in Equation~\eqref{eq:rnderiveps} given $\eta$, which by Lemma~\ref{lem:rnderivativecle4} is given by:
\begin{equation}\label{eq:rnderivproposition}
\frac{\di \mbbP_{D'}^{g_\bfb}}{\di \mbbP_{D'}^{g_\bfb,\xi}}(\eta) = \exp(-(\Phi_\eta,u_\xi^\eps)_{\nabla(D')} + \inv{2}((u_\xi^\eps)_\eta, (u_\xi^\eps)_\eta)_{\nabla(D')}).
\end{equation}
We stress here that Lemma~\ref{lem:rnderivativecle4} above applies to a zero-boundary GFF, and a local set with $\Phi_\eta = h_\eta = 0$ on the boundary. In our framework, that amounts to considering the curve $\eta$ coupled with $\Phi - g_\bfb=\Phi_{D'}^0$, and where the harmonic part $h_\eta$ is the difference of the two-valued function that is equal to $-\lam$ on the left of $\eta$ and $+\lam$ on the right, and of the harmonic function $g_\bfb$.
We compute this expression on the event that $\dist(\eta, \mcalB_r')>\eps$, and take $\eps$ to $0$. Since $u_\xi$ is harmonic in $D'$, we can write
\begin{align}
(\Phi_\eta,u_\xi^\eps)_{\nabla(D')} &= (\Phi_\eta,- \Delta u_\xi^\eps)_{D'} + \int_{\partial D'} \Phi_{\eta} \cdot  \partial_\nu u_\xi^\eps \nonumber\\
&= (\Phi_\eta,- \Delta (u_\xi^\eps- u_\xi))_{D'} + \int_{\partial D'} h_{\eta} \cdot \partial_\nu u_\xi^\eps \nonumber\\
&= -\int_{\partial D'} \partial_\nu (u_\xi^\eps - u_\xi) \cdot h_{\eta} + \int_{\partial D'} (u_\xi^\eps - u_\xi) \cdot \partial_\nu h_\eta + \int_{\partial D'} h_{\eta} \cdot \partial_\nu u_\xi^\eps \nonumber\\
&= - (\xi, \partial_\nu h_\eta)_{\mcalB_r'} \label{eq:somerandomeq}
\end{align}
using that $h_\eta$ is harmonic and zero  on $\partial D'$, and $u_\xi^\eps - u_\xi$ vanishes on $\partial (D' \setminus D'_\eps)$. Note again the abuse of notation: the above denotes the distributional pairing between $\xi$ and $\partial_\nu h_\eta$. The explicit expression for $h_\eta$ (or for $g_\bfb$) in terms of the Poisson kernel allows to further write
\[
(\Phi_\eta,u_\xi^\eps)_{\nabla(D')} = \left(\xi, 2 \lam \int_{\mcalB_\bfb^-} H_{\partial D'}(\di y,\cdot)\right)_{\mcalB_r'} = 2\lam (\xi, (H_{\partial D'})_{\mcalB_\bfb^-}^{\mcalB_r'} 1_{\mcalB_\bfb^-})_{\mcalB_r'}.
\]
On the other hand, we claim that
\begin{equation}\label{eq:dirichletenergyxi}
((u_\xi^\eps)_\eta, (u_\xi^\eps)_\eta)_{\nabla(D')} = (\xi, (H_{\partial D'} - H_{\partial (D' \setminus \eta)})_{\mcalB_r'}^{\mcalB_r'} \xi)_{\mcalB_r'}
\end{equation}
Notice that this expression makes sense as $(H_{\partial D'} - H_{\partial (D' \setminus \eta)})_{\mcalB_r'}^{\mcalB_r'}$ is a smoothing operator, and that it suffices to prove it for $\xi$ smooth as for a general $\xi \in H^{-1}(\mcalB_r')$, taking $(\xi_n)_{n \geq 0} \subset C_0^\infty(\mcalB_r')$ converging to $\xi$ in $H^{-1}(\mcalB_r')$, we can write Equation~\eqref{eq:dirichletenergyxi} for $\xi_n$ and take the limit $n \to \infty$. This follows from $H_{\partial D'} - H_{\partial (D' \setminus \eta)}$ being continuous on $H^{-1}(\mcalB_r')$ and by $(u_{\xi_n}^\eps)_\eta$ converging to $(u_{\xi}^\eps)_\eta$ in $H_0^1(D')$.
Let $v_\eta$ be defined as
\begin{equation}\label{eq:defveta}
v_\eta(z) = (\xi, H_{D' \setminus \eta}(z, \cdot))_{\mcalB_r'}, \quad z \in D'\setminus\eta.
\end{equation}
On $\inset{\dist(\eta, \mcalB')>\eps}$, it holds that $(u_\xi^\eps)_\eta + v_\eta = u_\xi$, and $((u_\xi^\eps)_\eta, u_\xi)_{\nabla(D')} =0$ as $(u_\xi^\eps)_\eta$ vanishes on $\partial D'$ and $u_\xi$ is harmonic in $D'$. We can readily write:
\[
((u_\xi^\eps)_\eta, (u_\xi^\eps)_\eta)_{\nabla(D')}
= (v_\eta, v_\eta)_{\nabla(D')} - (u_\xi, u_\xi)_{\nabla(D')}.
\]
By harmonicity of $v_\eta$ and $u_\xi$, we have that
\begin{equation}
(v_\eta, v_\eta)_{\nabla(D')} - (u_\xi, u_\xi)_{\nabla(D')} = \int_{\mcalB_r'} (u_\xi \cdot \partial_\nu u_\xi - v_\eta \cdot \partial_\nu v_\eta).
\end{equation}
Using now the explicit expressions for $v_\eta$ and $u_\xi$ we obtain Equation~\eqref{eq:dirichletenergyxi}.

We conclude the proof of the proposition by taking $\eps \to 0$, as the terms computed above do not depend on $\eps$.
\end{proof}

\subsubsection{Further remarks}\label{sssec:cornoncrossing4}

In general, we know no closed form expression for the probability terms $\mbbP_D^{g_\bfb}(\mcalA_\bfb)$. However, in the case of the annulus such an expression can be found via stochastic calculus. Let us explain its origins in the following remark. 
\begin{remark}
    In \cite{izyurov_hadamards_2013, hagendorf_gaussian_2010}, $\sle_4$ is studied in the annulus. For a GFF in $(\ann{p},e^{ix},e^{iy})$, where $x, y \in \R$, with boundary conditions $-\lam$ on $(e^{iy}e^{ix})$, $+\lam$ on $(e^{ix}e^{iy})$ and $\mu$ on $C_{e^{-p}} \subset \partial \ann{p}$, it is shown that the corresponding level line is driven according to the annulus Loewner equation (see \cite{zhan_stochastic_2004}) with a driving process $\exp(iW_t)$, where the relative position $g_t(e^{ix})\exp(-iW_t)=:\exp(i B_t)$ is such that $B_t$ is a Brownian bridge from (the representative in $[0,2\pi)$ modulo $2 \pi$ of) $y-x$ to $\pi(1-\mu/\lam)$, of time-duration $p$, and up to its exit time $T$ of $[0,2\pi]$. The event that $T=p$ (the Brownian bridge stays within the interval) corresponds to the resulting level-line touching the inner boundary component $C_{e^{-p}}$, with the conformal modulus of the slit domain plays the role of the time parametrisation, whereas $T< p$ corresponds to the level-line reaching $e^{iy}$ from the left or right of $C_{e^{-p}}$, when $B_T=0$ or $2 \pi$ respectively. Observe that we have $T< p$ almost surely when $\abs{\mu}\geq\lambda$, as mentioned in Section~\ref{subsec:coupling}.
    In our case, the probabilities $\mbbP_{\ann{p}}^{g_{-\lam}}(\mcalA_{-\lam})$ and $\mbbP_{\ann{p}}^{g_{+\lam}}(\mcalA_{+\lam})$ correspond to the probability that a Brownian bridge from $y-x$ to $2 \pi$ (resp. to $0$) exits $[0,2\pi]$ through $2 \pi$ (resp. $0$), and the conformal modulus of the component of $\ann{p}\setminus \eta_T$ that is doubly connected is equal to $T$.
\end{remark}

Second, let us explain why the proof implies that the conformal restriction $\sle_4$ cannot be seen as a level line in the usual sense where it is considered to be a local set.

\begin{remark}\label{rem:levelline}
    The results above show that a.s. each instance of the conformal restriction $\sle_4$ can be seen as a sort of contour line of a single GFF: however, notice that in the construction the boundary conditions themselves depend on this contour line. As a matter of fact, a consequence of the proof is that there is no way to realise SLE as a generalised level-line of a single GFF even with random boundary conditions, as long as we do not permit the boundary conditions to depend on the lines ---in particular one cannot see the conformal restriction $\sle_4$ as a local set with respect to which the GFF would admit the usual markovian decomposition. Whereas we do not provide a proof of this claim here, let us outline an intuitive argument showing that coupling a GFF with conformal restriction $\sle_4$ as a level line entails that the boundary conditions of the GFF are a function of the topological class of $\sle_4$.
    
    Assume that $D=\ann{p}$ for simplicity and concreteness and let $D' \subseteq D$ be a test domain with $\dist(D', C_{e^{-p}})>\eps$ and $C_{e^{-p}}$ belonging to the connected component of $\ann{p} \setminus D'$ containing $1$. If a coupling $(\tilde \Phi_{\ann{p}}, \eta)$ of the sort were to exist, we could condition on $\mcalA_{+\lam}$, the event that $\eta$ passes to the left of $C_{e^{-p}}$. On $\ann{p - \de}$, $\de$ s.t. $e^{-p+\de}=e^{-p}+\eps$, we can write $\tilde \Phi_{\ann{p}} = \Phi_{\ann{p-\de}}^0 + u_{\xi}$ where $u_{\xi}$ is the harmonic extension of $\xi = \tilde \Phi_{\ann{p}}\restr{\partial \ann{p-\de}}$. The restriction property would then require that the level-line of $\tilde \Phi_{\ann{p}}$ conditioned to stay in $D'$ is absolutely continuous w.r.t. the level-line of $\Phi_{D'}^{g_{D'}} := \Phi_{D'}^0 + g_{D'}$ with the right Radon-Nikodym derivative, and the proof shows that this is the case provided that the expectation of Equation~\eqref{eq:computderiv} w.r.t. $\xi$ conditional on $\eta$ corresponds to $\exp(-m_{\ann{p}}(\eta, D')/2)$. It is reasonable to imagine that this property can hold for all $D'$ as above only if $\xi \sim \mcalN(g_{D, +\lam}, G_D)$ on $C_{e^{-p}+\eps}$, so that taking $\eps \to 0$ we recover that on the event $\mcalA_{+\lam}$, $\tilde \Phi_{\ann{p}}\restr{C_{e^{-p}}}=\Phi_D^{g_{D,+\lam}}\restr{C_{e^{-p}}}=+\lam$, i.e. that the boundary condition of $\tilde \Phi_D$ on $C_{e^{-p}}$ has to be $+\lambda$.
\end{remark}
\subsection{Constructing the crossing \texorpdfstring{$\sle_4$}{SLE4}}\label{subsec:crossingsle4}

We now go over the construction of the $\sle_4$ measure in the crossing case, i.e. corresponding to a measure on simple curves joining two points $x,y$ belonging to two different connected components of the boundary of a multiply connected domain $D$. There are two notable differences with the non-crossing case:
\begin{itemize}
    \item The relevant level-lines are coupled with multivalued GFFs, corresponding to the sum of a (Dirichlet, zero-boundary) GFF and a deterministic multivalued harmonic function with locally constant boundary conditions with a change of $\pm 2 \lam$ around $x$ and $y$ on $\partial D$, and a change of branch describing the winding of the curve along the different boundary components. In general these level-lines are again conditioned on topological events, but in the case of doubly connected domains the coupling can be realised directly with a single compactified GFF--a sum of a (Dirichlet, zero-boundary) GFF and multivalued harmonic function with random monodromy.
    \item The topological events $\mcalA_\bfb, \bfb=(b_1, \cdots, b_n) \in \inset{\pm \lam}^n$, that amounted to assignations of $-\lam$ or $+\lam$ as boundary conditions of a GFF to the connected components of the boundary of the domain, correspond in the crossing case to events $\mcalA_\bfb, \bfb = (\sbin, b_1, \dots, b_n) \in \Lambda^{n+1} := \Z \times (2\lam \Z +\lam)^n$. The component $\sbin$ describes the winding of any $\eta \in \mcalA_\bfb$ around $\bin$, while the other coordinates represent the change in branch of the multivalued harmonic function acting as the mean of the GFF when approaching the other boundary component.
\end{itemize}
We will follow a similar structure - we start with the set-up and statement, then give the proof and finish with some corollaries and remarks.

\subsubsection{Set-up and statement}

Let now $(D,x,y)$ be a crossing admissible domain, for which we may assume that $\bin=C_{e^{-p}}$, $\bout = C_1 = \partial \D$, that all $\mcalB_j, 1 \leq j \leq n$ are given by circles of centre $z_j$, and that $x=1$. Denote by $U$ its preimage by $q: z \mapsto e^{iz}$, a subset of the strip $S_p := \inset{a+ib: a \in \R, 0<b<p}$. For $\bfb \in \Lambda^{n+1}$, let $w$ be such that $e^{iw}=y$ and $0 \leq w - 2 \pi \sbin <2\pi$. We define $g_\bfb$ as the multivalued harmonic function in $D$ whose lift $\tilde g_\bfb$ to $S_p$ is the harmonic extension of the following boundary conditions: $\lam$ on the segments $[0, 2\pi], w+[0, 2\pi]$, $b_j$ on the lift of $\mcalB_j$ that contains the point $\tilde z_j$ in the fiber of $z_j$ such that $0 \leq \Re \tilde z_j < 2 \pi$, and with the values on the other parts of the boundary fixed by the condition $\tilde g_\bfb(z+2k \pi) = \tilde g_\bfb(z) + 2k\lam$.

We then let $\mcalA_\bfb$ denote the set of simple curves $\eta:x \to y$ such that the parts of $\partial U$ that are contained between any two adjacent lifts of $\eta$ correspond to constant boundary values of $g_\bfb$. We finally define $\mbbP_D^{g_\bfb}$ as the law of the generalised level line of a GFF $\Phi_D$ in $D$ with boundary conditions given by $g_\bfb$ (see also Appendix~\ref{app:existencelevelline}).

\begin{figure}[!ht]
	   \centering
	   \includegraphics[scale=0.53]{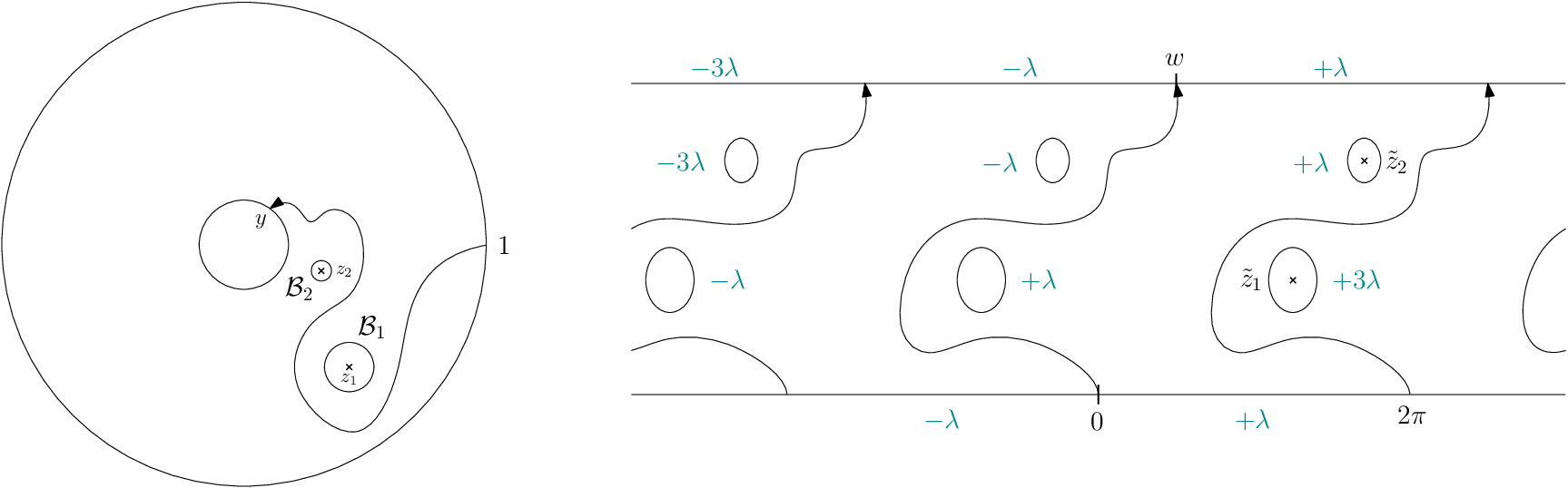}
        \caption{A configuration with a curve in $\mcalA_{0,+3\lam,+\lam}$.}
        \label{fig:crossing_disk}
\end{figure}

The point of the definition is just to guarantee that $\tilde g_\bfb$ corresponds to the boundary conditions $\lambda(1+2k)$ on every part of the boundary of $U$ between the lifts of $\eta$ starting from $2 k \pi$ and $2 (k+1) \pi$ (for any $\eta \in \mcalA_\bfb$: see Figure~\ref{fig:crossing_disk}).

The Dirichlet energy $\normreg{g_\bfb}$ is then defined as $\norm{\tilde g_\bfb}_{\nabla(D'), \reg}$, where $D' \subset U$ is any fundamental domain for the action of the translation by $2\pi$ in $U$. For instance, one may take $D'$ as the intersection of $U$ and a parallelogram with vertices $-\pi,\pi, w-\pi, w+\pi$, for any $w$ such that $e^{iw}=y$, and regularise the divergences at $0$ and $w$ like in Section~\ref{subsec:dirichlet}.

\begin{theorem}\label{thm:crossing}
    Let $(D,x,y)$ be a crossing admissible domain. The measure $\nu_D$ on simple curves from $x$ to $y$ in $D$ defined as
    \begin{equation}\label{eq:defcrossingsle4measure}
        \nu_D := \sum_{\bfb \in \Lambda^{n+1}} \ind{\eta \in \mcalA_\bfb} \mbbP_D^{g_\bfb} \cdot Z_{D,\bfb},
    \end{equation}
    where
    \[
        Z_{D,\bfb} = \exp(-\frac{1}{2} \normreg{g_\bfb}^2)
    \]
    is the measure of $\sle_4$ in $(D,x,y)$.
\end{theorem}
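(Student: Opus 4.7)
The plan is to mirror the induction scheme of the proof of Theorem~\ref{thm:noncrossing}. Let $\tilde\nu_D$ denote the candidate measure defined by the right-hand side of \eqref{eq:defcrossingsle4measure}. I would prove by induction on $n\geq 0$ (the number of extra boundary components $\mcalB_j$) that for every smooth simply connected test domain $D'$ relative to $(D,x,y)$,
\[
\frac{\di \tilde\nu_{D'}}{\di \tilde\nu_D}(\eta) = \ind{\eta \subset D'}\exp(\tfrac{1}{2}\,m_D(\eta, D \setminus D')),
\]
which by Definition~\ref{def:sle} and the additivity of the Brownian loop measure identifies $\tilde\nu_D$ with the conformal restriction $\sle_4$. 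Because $D'$ is simply connected while $\bin$ and each $\mcalB_j$ lie in $D\setminus D'$, the inclusion $\{\eta\subset D'\}\subset \mcalA_\bfb$ forces a unique $\bfb\in\Lambda^{n+1}$: its first coordinate is the winding of any $\eta\subset D'$ around $\bin$, and each subsequent coordinate records the branch of $\tilde g_\bfb$ on the lift of $\mcalB_j$ separated off by $\eta$. The base case is the crossing annulus ($n=0$), which I would handle using the coupling between the annulus level line and a compactified GFF alluded to in the introduction: the mixture $\sum_{\sbin\in\Z}\ind{\mcalA_{\sbin}}\mbbP_{\ann{p}}^{g_{\sbin}}\,Z_{\ann{p},\sbin}$ reassembles into the law of this compactified level line, weighted by the annulus partition function, and the restriction identity for $\tilde\nu_{\ann{p}}$ then follows by an adaptation of the simply connected argument.

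\textbf{Key Radon--Nikodym computation.} For the inductive step, set $\mcalB^{\circ}:=\partial D'\cap D=\mcalB_l'\cup\mcalB_r'$ and fix the unique $\bfb$ compatible with $D'$. As in the non-crossing case I would disintegrate via Lemma~\ref{lem:rnderivativedisintegrationlemma}, writing $\xi:=(\Phi_D^{g_\bfb}-g_\bfb)\restr{\mcalB^{\circ}}$, which conditionally on $\{\eta\subset D'\}$ is Gaussian with covariance $G_{D\setminus\eta}\restr{\mcalB^{\circ}}$. The inner Radon--Nikodym derivative $\di\mbbP_{D'}^{g_{D'}}/\di\mbbP_{D'}^{g_\bfb,\xi}$ is the crossing analogue of Proposition~\ref{prop:computderiv}. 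To handle the multivaluedness of $g_\bfb$, I would lift all objects to the universal cover $U\subset S_p$ via $q(z)=e^{iz}$: $\tilde g_\bfb$ becomes single-valued and harmonic, the level line of $\Phi_D^{g_\bfb}$ lifts to a $2\pi$-periodic family of disjoint crosscuts in $U$, and on a fundamental domain the Cameron--Martin shift of Lemma~\ref{lem:rnderivativecle4}, the harmonic integration by parts leading to \eqref{eq:somerandomeq}, and the Dirichlet identity \eqref{eq:dirichletenergyxi} all go through verbatim. This yields
\[
\frac{\di\mbbP_{D'}^{g_{D'}}}{\di\mbbP_{D'}^{g_\bfb,\xi}}(\eta) = \ind{\eta\in\mcalA_\bfb,\,\eta\subset D'}\exp\!\left(\tfrac{1}{2}(\xi,(H_{\partial D'}-H_{\partial(D'\setminus\eta)})\xi)_{\mcalB^{\circ}} + (\xi,\partial_\nu(\tilde g_{D'}-\tilde g_\bfb))_{\mcalB^{\circ}}\right).
\]

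\textbf{Gaussian integration.} Averaging over $\xi$ via Lemma~\ref{lem:dub}, the Fredholm determinant produces $\exp(\tfrac{1}{2}m_D(\eta,\mcalB^{\circ})) = \exp(\tfrac{1}{2}m_D(\eta,D\setminus D'))$ by Proposition~\ref{prop:detloopsoup}, while the Gaussian quadratic exponential simplifies, via the operator identities of Lemma~\ref{lem:idloops} and the Poisson-kernel representation of $\tilde g_{D'}-\tilde g_\bfb$, into $\tfrac{1}{2}(\norm{\tilde g_{D'}}_{\nabla(D'),\reg}^2-\norm{\tilde g_\bfb}_{\nabla(D),\reg}^2) = \log(Z_{D,\bfb}/Z_{D'})$, both regularised Dirichlet energies being computed on the chosen fundamental domain in $U$. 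Combined with the weight ratio $Z_{D'}/Z_{D,\bfb}$ coming from the definition \eqref{eq:defcrossingsle4measure} of $\tilde\nu$, this produces the target restriction identity; the normalisation $Z_\Hp(0,+\infty)=1$ and conformal covariance on simply connected subdomains then identify $\tilde\nu_D$ with $\sle_4$.

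\textbf{Main obstacle.} The principal difficulty is executing the lift-to-universal-cover program cleanly. One must verify that the boundary pairings with $\partial_\nu\tilde g_\bfb$, the regularised Dirichlet energy $\normreg{g_\bfb}$, and the identity \eqref{eq:dirichletenergyxi} are all well defined on $U$, invariant under the deck transformation $z\mapsto z+2\pi$, and independent of the chosen fundamental domain. One also needs the level-line existence, local uniqueness, and Markov property machinery of \cite{aru_bounded-type_2019}, together with Lemma~\ref{lem:rnderivativecle4}, to extend to multivalued boundary conditions, a point naturally deferred to Appendix~A. Finally, absolute convergence of $\sum_{\bfb\in\Lambda^{n+1}}$ must be checked, which should follow from the quadratic growth of $\normreg{g_\bfb}^2$ in the components of $\bfb$, producing super-exponentially decaying weights $Z_{D,\bfb}$.
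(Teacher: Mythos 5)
Your toolkit (disintegrate over the trace $\xi$ of the field on the cut, compute a fixed-$\xi$ Radon--Nikodym derivative \`a la Proposition~\ref{prop:computderiv}, then Gaussian-integrate via Lemma~\ref{lem:dub} and Proposition~\ref{prop:detloopsoup}) is the right one, but the architecture has genuine gaps. First, your ``induction on $n$'' is not actually an induction: you never describe a reduction from $n$ to $n-1$ holes, and your inductive step is a direct one-shot comparison of $D$ with a simply connected test domain $D'$ --- if that comparison worked, no induction would be needed, and the annulus base case (which you defer to an unexplained ``adaptation of the simply connected argument'') is already the full difficulty. The paper instead avoids induction here entirely: it cuts $D$ along a single arc $\delta \subset \partial D''\cap D$ running from $x$ to $\bin$, so that $D'=D\setminus\delta$ is conformally a \emph{non-crossing} admissible domain to which Theorem~\ref{thm:noncrossing} (already proved, for arbitrary $n$) applies; the only new computation is then the comparison across the single two-sided crosscut $\delta$.

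Second, and more substantively, your displayed formula for $\di\mbbP_{D'}^{g_{D'}}/\di\mbbP_{D'}^{g_\bfb,\xi}$ misses the structural novelty of the crossing case. In a crossing domain a crosscut $\eta$ from $\bout$ to $\bin$ does \emph{not} disconnect $D$, so the traces $\xi_l,\xi_r$ on the two arcs (or the two sides of $\delta$) are \emph{not} conditionally independent given $\eta$: the covariance $G_{D\setminus\eta}$ has off-diagonal blocks, and a Brownian excursion in $D'$ can travel from one side of the cut to the other around the back of $\bin$. This produces the cross term $2(H_{\partial D'})_{\mcalB_l'}^{\mcalB_r'}$ in the quadratic form $M$ of Proposition~\ref{prop:computderivcrossing}, without which neither the Fredholm determinant equals $\exp(m_D(\eta,D\setminus D'))$ nor the exponential term collapses to $Z_{D,\bfb}/Z_{D'}$. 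Your claim that the non-crossing computations ``go through verbatim'' on a fundamental domain also hides the genuinely delicate step: to invert $(G_D)_\delta^\delta$ one must pass to the single-valued difference $g_D-g_{D',m}$ (where $g_{D',m}$ is a multivalued modification of $g_{D'}$) and exploit that $g_D$ has monodromy $2\lam$ with $\partial_{\nu_l}g_D=-\partial_{\nu_r}g_D$ on $\delta$; boundary terms on the sides of a fundamental domain do not simply vanish. These points need to be carried out explicitly rather than asserted.
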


\begin{corollary}\label{cor:partitionfunctioncrossing4}
    The partition function of $\sle_4$ in a crossing admissible domain $(D,x,y)$ can be expressed as 
    \[
        Z_D(x,y) := \sum_{\bfb \in \Lambda^{n+1}} \mbbP_D^{\bfb}(\mcalA_\bfb) \exp(-\inv{2} \normreg{g_\bfb}^2)
    \]
    and it is finite.
\end{corollary}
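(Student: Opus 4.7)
The expression for $Z_D(x,y)$ is immediate from Theorem~\ref{thm:crossing}: taking the total mass of $\nu_D$ and using that each $\mbbP_D^{g_\bfb}$ is a probability measure yields
\[
    Z_D(x,y) = |\nu_D| = \sum_{\bfb \in \Lambda^{n+1}} Z_{D,\bfb}\,\mbbP_D^{g_\bfb}(\mcalA_\bfb).
\]
Since $\mbbP_D^{g_\bfb}(\mcalA_\bfb) \leq 1$, the whole content of the corollary is to prove
\[
    \sum_{\bfb \in \Lambda^{n+1}} \exp\paren{-\tfrac{1}{2}\normreg{g_\bfb}^2} < \infty.
\]

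My plan is to decompose $\bfb \mapsto \normreg{g_\bfb}^2$ as constant $+$ linear $+$ positive-definite quadratic form in $\bfb$, from which summability over the lattice $\Lambda^{n+1} \hookrightarrow \R^{n+1}$ follows by a standard multidimensional theta-series estimate. To exhibit this structure I would fix a reference $\bfb_0 \in \Lambda^{n+1}$ and write $g_\bfb = g_{\bfb_0} + h_{\bfb - \bfb_0}$, where the lift of $h_c$ to a fundamental domain $D' \subset S_p$ is harmonic with boundary values depending linearly on $c$. Crucially, the $\pm 2\lam$ jumps of $g_\bfb$ at $x,y$ are independent of $\bfb$, so $h_c$ is regular near $x,y$ and has a finite un-regularised Dirichlet energy $Q(c) := \norm{h_c}_{\nabla(D')}^2$. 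Expanding
\[
    \normreg{g_\bfb}^2 = \normreg{g_{\bfb_0}}^2 + 2\,(g_{\bfb_0}, h_{\bfb-\bfb_0})_{\nabla(D'),\reg} + Q(\bfb-\bfb_0),
\]
the cross term is finite and linear in $\bfb - \bfb_0$ by integration by parts on $D'$: the contributions on the two $2\pi$-translated vertical sides cancel up to a monodromy term of the form $2\lam \int \partial_x \tilde h_{\bfb-\bfb_0}$, which is itself linear in $\bfb-\bfb_0$, while on the lifts of $\partial D$ the locally constant $g_{\bfb_0}$ integrates against the smooth normal derivative of $h_{\bfb-\bfb_0}$ to a finite linear functional of $\bfb-\bfb_0$.

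Positive-definiteness of $Q$ then yields $Q(c) \geq c_0 |c|^2$ on $\R^{n+1}$ for some $c_0 > 0$ and the desired summability. The positivity itself should follow from the maximum principle applied to $h_c$: if $h_c \equiv 0$ its boundary data must vanish, and those data are linearly independent functionals of the components $c = (\sbin - \sbin_0, b_1 - b_{0,1}, \dots, b_n - b_{0,n})$, forcing $c = 0$. The main technical subtlety is in the winding direction $\sbin$: because $\tilde g_\bfb$ has monodromy $2\lam$ per $2\pi$-period in $S_p$, one has to verify that the winding mode contributes strictly positively to $Q$ and is not cancelled by boundary effects. I would handle this by subtracting off an explicit multivalued reference---for instance a multiple of $\Re(z)$ on $S_p$ carrying the right monodromy---to reduce to a single-valued harmonic extension problem on $D'$ where the maximum principle argument applies without subtlety.
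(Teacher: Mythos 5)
Your proposal is correct and matches the paper's argument: the paper likewise writes $g_\bfb = g_{\{0,\lam,\dots,\lam\}} + 2\lam\,\sbin\, v_{\sbin} + \sum_j (b_j-\lam)v_j$ with $v_j$, $v_{\sbin}$ the harmonic functions with boundary value one on a single component, and concludes summability from the quadratic growth of $\normreg{g_\bfb}^2$ in $\bfb$. The one subtlety you flag about the winding mode is in fact a non-issue: since every $g_\bfb$ has the same monodromy $2\lam$ per period, the difference $g_\bfb - g_{\bfb_0}$ is already single-valued and equals a genuine linear combination of the $v_j$'s, so the positive-definiteness of $Q$ follows directly from their linear independence.
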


The summability of the series is provided by the exponential decay of $Z_{D, \bfb}$ as $\norm{\bfb}_\infty \xrightarrow[]{} \infty$. Indeed, one can write
\[
g_\bfb = g_{\inset{0,\lam,\cdots,\lam}} + 2 \lam\sbin \cdot v_{\sbin} + \sum_{j=1}^n (b_j-\lam)  v_j,
\]
where $v_j$ for $1 \leq i \leq n$ (resp. $v_{\sbin}$) is a harmonic function whose boundary values are one on $\mcalB_j$ (resp. on $\bin$), zero elsewhere. When an index $b_j, j \in \inset{\text{in}, 1, \dots, n}$ goes to $\pm\infty$, the leading term in $\normreg{g_\bfb}^2$ is given by $b_j^2 \norm{v_j}_{\nabla(D)}$ and therefore $\exp(-\inv{2}\normreg{g_\bfb}^2)$ decays exponentially fast.

\subsubsection{Proof of the theorem}

\begin{proof}[Proof of Theorem~\ref{thm:crossing} ]
Let $\tilde \nu_D$ be the measure in the statement of the theorem, $D'' \subset D$ be a test domain, and let $\bfb'\in \Lambda^{n+1}$ be such that $\inset{\eta \in D''}\subset\mcalA_{\bfb'}$. This time again, instead of comparing $\tilde \nu_D$ directly to $\nu_{D''}(x,y)$, we use an intermediary domain $D'=D \setminus \delta$, where $\delta$ is the arc of $\partial D'' \cap D$ from the right of $x$ to $\bin$. In particular, $\sle_4$ from $x$ to $y$ in $D'$ is the non-crossing $\sle$ that was previously constructed ($D'$ is conformally equivalent to a non-crossing admissible domain), and its law $\nu_{D'}(x,y)$ is described by Theorem~\ref{thm:noncrossing}, in which case it holds that
\[
    \frac{\di \nu_{D''}(x,y)}{\di \nu_{D'}(x,y)}(\eta)= \ind{\eta \subset D''} \exp(\inv{2} m_{D'}(\eta, D'\setminus D'')).
\]
The set of $\bfb \in \Lambda^{n+1}$ such that $\inset{\eta \in D'} \cap \mcalA_\bfb \neq \emptyset$ can be naturally identified with $\inset{\pm \lam}^n$, as ${\eta \in D'}$ fixes the winding around $\bin$, and leaves the freedom of going to the left or the right of each $\mcalB_j$ in $D'$. Let us under this identification consider $\bfb \in \inset{\pm \lam}^n$, and denote $g_{D'} = g_{D',\bfb}, g_D = g_{D, \inset{\sbin, \bfb}}$ (where $\sbin$ is determined by $D'$) for simpler notation. We have to prove that $\mbbP_{D'}^{g_{D'}}$ is absolutely continuous with respect to $\mbbP_D^{g_D}$ with Radon-Nikodym derivative given by
\[
\frac{\di \mbbP_{D'}^{g_{D'}}}{\di \mbbP_D^{g_D}}(\eta) = \ind{\eta \subset D'}\ind{\eta \in \mcalA_\bfb}\frac{Z_{D,\bfb}}{Z_{D',\bfb}} \exp(\inv{2} m_D(\eta, D \setminus D')).
\]
This is achieved once again by computing the average of the Radon-Nikodym derivative for fixed $\xi$ over $\xi \sim \mcalN(0,G_{D \setminus \eta}\restr{\de})$. Notice that conditionally on $\eta$, $\Phi_D^{g_D}\restr{D'}$ can be written $\Phi_{D \setminus \eta}^0 + u_\xi + h_\eta$ where $u_\xi$ is the harmonic extension of $\xi$ to $D'$, $h_\eta$ is multivalued with every branch in $D'$ of the form $(2k-1)\lam + 2\lam \ind{\text{right of } \eta}(z)$, so that the law $\mbbP_D^{g_D}$ knowing $\xi$ and restricted to the event $\mcalA_\bfb \cap \inset{\eta \subset D'}$ is the same as $\mbbP_{D'}^{g_{D'}, \xi}$ from the non-crossing case thanks to the construction of $g_D$ and the definition of $\mcalA_\bfb$. Similarly to the non-crossing case, we start with the computation of the Radon-Nikodym derivative for $\xi$ fixed.\footnote{Although $\de$ is one single crosscut, it is part of the boundary of both $D_l'$ and $D_r'$, the left and right connected components of $D'$ relative to $\eta$: as such, we denote formally by $\mcalB_l'$ (resp. $\mcalB_r'$) $\de$ seen as the boundary of $D_l'$ (resp. $D_r'$). We will be brought to consider Brownian excursions from $\de$ to $\de$ in $D$, which we understand as excursions between the two preimages of $\de$ by $x \mapsto e^{ix}$, that we identify to $\mcalB_l'$ and $\mcalB_r'$. Denote also by $\nu_l$ (resp. $\nu_r$) the outwards normal vector to $D_l'$ (resp $D_r'$) at $x \in \de$: we abuse notation and denote $\partial_{\nu_l}g_{D'}$ for the normal derivative of $g_{D'}$ on $\partial D_l'$, and stress that $\partial_{\nu_l}g_{D'} \neq -\partial_{\nu_r}g_{D'}$ on $\de$.\nopagebreak}

\begin{proposition}\label{prop:computderivcrossing}
    For all $\xi \in H^{-1}(\delta)$, it holds that $\di \mbbP_{D'}^{g_{D'}} \ll \di \mbbP_{D'}^{g_{D'}, \xi}$, and we have explicitly that
    \begin{equation}\label{eq:computderivcrossing}
        \frac{\di \mbbP_{D'}^{g_{D'}}}{\di \mbbP_{D'}^{g_{D'}, \xi}}(\eta)\ind{\eta \in \mcalA_\bfb, \eta \subset D'} = \ind{\eta \in \mcalA_\bfb, \eta \subset D'} \exp(\inv{2} (\xi, M \xi)_{\de} - ((\partial_{\nu_l} + \partial_{\nu_r}) g_{D'}, \xi)_{\de}),
    \end{equation}
    where
    \[
        M=(H_{\partial D'} - H_{\partial (D' \setminus \eta)})_{\mcalB_l'}^{\mcalB_l'} + (H_{\partial D'} - H_{\partial (D' \setminus \eta)})_{\mcalB_r'}^{\mcalB_r'} + 2 (H_{\partial D'})_{\mcalB_l'}^{\mcalB_r'}
    \]
    with the subscripts/superscripts $\mcalB_l', \mcalB_r'$ to differentiate when the operators correspond to Brownian excursions from $\mcalB_j'$ to $\mcalB_k'$, $j,k=l,r$.
\end{proposition}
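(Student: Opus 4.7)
The plan is to mirror the proof of Proposition~\ref{prop:computderiv}, the only new feature being that the cut $\delta$ contributes two boundary arcs $\mcalB_l'$ and $\mcalB_r'$ to $\partial D'$ instead of one. Since $D'=D\setminus\delta$ is simply connected and both $\mbbP_{D'}^{g_{D'}}$ and $\mbbP_{D'}^{g_{D'},\xi}$ are non-crossing level-line laws, the Cameron--Martin/local-set argument carries over without modification. I would introduce the harmonic extension $u_\xi$ of $\xi$ to $D'$ obtained by imposing $\xi$ identically on both $\mcalB_l'$ and $\mcalB_r'$ (and $0$ on the remaining boundary arcs), its regularisation $u_\xi^\eps\in H_0^1(D')$ as in the non-crossing proof, and consider the shifted field $\tilde\Phi:=\Phi-u_\xi^\eps$ with $\Phi:=\Phi_{D'}^0+g_{D'}$. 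Local uniqueness of generalised level lines \cite[Lemma~15]{aru_bounded-type_2019} then identifies the level lines of $\Phi$ and of $\tilde\Phi+u_\xi$ on $\{\dist(\eta,\delta)>\eps\}$, and applying Lemma~\ref{lem:rnderivativecle4} yields
\[
\frac{\di\mbbP_{D'}^{g_{D'}}}{\di\mbbP_{D'}^{g_{D'},\xi}}(\eta) = \exp(-(\Phi_\eta,u_\xi^\eps)_{\nabla(D')} + \tfrac12((u_\xi^\eps)_\eta,(u_\xi^\eps)_\eta)_{\nabla(D')}),
\]
where $\Phi_\eta$ is the harmonic function in $D'\setminus\eta$ vanishing on $\partial D'$ with boundary value $\pm\lam-g_{D'}$ on the two sides of $\eta$.

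The linear term would then be computed by the same integration by parts as in Equation~\eqref{eq:somerandomeq}, but now both $\mcalB_l'$ and $\mcalB_r'$ contribute: using that the piecewise constant $\pm\lam$ has vanishing normal derivative on each arc of $\partial D'$, so that $\partial_{\nu_a}\Phi_\eta=-\partial_{\nu_a}g_{D'}$ on $\mcalB_a'$ for $a\in\{l,r\}$, one gets
\[
(\Phi_\eta,u_\xi^\eps)_{\nabla(D')} = -(\xi,\partial_{\nu_l}\Phi_\eta)_{\mcalB_l'} - (\xi,\partial_{\nu_r}\Phi_\eta)_{\mcalB_r'} = ((\partial_{\nu_l}+\partial_{\nu_r})g_{D'},\xi)_\delta,
\]
which matches the second term in the desired exponential.

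For the quadratic term, I would introduce $v_\eta(z):=\int_\delta H_{D'\setminus\eta}(z,w)\xi(w)\ld w$, harmonic in $D'\setminus\eta$ with boundary values $\xi$ on $\mcalB_l'\cup\mcalB_r'$ and zero elsewhere. On $\{\dist(\eta,\delta)>\eps\}$ one verifies $(u_\xi^\eps)_\eta=u_\xi-v_\eta$, and the Dirichlet orthogonality $(u_\xi,(u_\xi^\eps)_\eta)_{\nabla(D')}=0$ (which follows from harmonicity of $u_\xi$ in $D'$ and the vanishing of $(u_\xi^\eps)_\eta$ on $\partial D'$) gives
\[
((u_\xi^\eps)_\eta,(u_\xi^\eps)_\eta)_{\nabla(D')} = (v_\eta,v_\eta)_{\nabla(D')} - (u_\xi,u_\xi)_{\nabla(D')} = \sum_{a\in\{l,r\}}\int_{\mcalB_a'}\xi\,\partial_{\nu_a}(u_\xi-v_\eta).
\]
The key topological observation is that $\eta$, joining $\bout$ to $\bin$, separates $\mcalB_l'$ from $\mcalB_r'$ in $D'\setminus\eta$, so that the off-diagonal boundary Poisson kernel $H_{\partial(D'\setminus\eta)}^{(l,r)}$ vanishes; in contrast, in the simply connected $D'$ the off-diagonal kernel $H_{\partial D'}^{(l,r)}$ is non-trivial, corresponding to Brownian excursions from $\mcalB_l'$ to $\mcalB_r'$ that go around $\bin$ or $\bout$. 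Expressing each $\partial_{\nu_a}(u_\xi-v_\eta)$ through the four boundary Poisson kernel operators $H_{\partial D'}^{(b,a)}-H_{\partial(D'\setminus\eta)}^{(b,a)}$ for $a,b\in\{l,r\}$ and combining the two off-diagonal contributions via self-adjointness produces exactly $(\xi,M\xi)_\delta$ with $M$ as in the statement.

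Finally, I would let $\eps\to 0$---the right-hand side being independent of $\eps$---and approximate a general $\xi\in H^{-1}(\delta)$ by smooth functions to extend the identity to the full distributional setting, relying on boundedness of $(H_{\partial D'}-H_{\partial(D'\setminus\eta)})^{(a,a)}$ and of $H_{\partial D'}^{(l,r)}$ as operators on $H^{-1}(\delta)$. The main obstacle is not any single analytic estimate, but rather the careful bookkeeping of the two sides of $\delta$ appearing as distinct boundary arcs of $\partial D'$; the cross-term $2(H_{\partial D'})^{\mcalB_r'}_{\mcalB_l'}$ in $M$ arises solely from the topological contrast between $D'$, where $\mcalB_l'$ and $\mcalB_r'$ communicate, and $D'\setminus\eta$, where they do not.
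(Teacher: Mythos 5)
Your proposal is correct and follows essentially the same route as the paper: the Girsanov/local-set argument of Proposition~\ref{prop:computderiv} verbatim, the decomposition $u_\xi=(u_\xi^\eps)_\eta+v_\eta$ with $v_\eta$ built from the Poisson kernels of the two components of $D'\setminus\eta$, and the cross term $2(H_{\partial D'})_{\mcalB_l'}^{\mcalB_r'}$ arising because $\eta$ separates the two sides of $\de$ (the paper obtains it by expanding $(u_l+u_r,u_l+u_r)_{\nabla(D')}$, you by noting the vanishing of the off-diagonal kernel of $D'\setminus\eta$ --- the same computation). One small correction: $D'=D\setminus\de$ is simply connected only when $n=0$ (in general it is a multiply connected non-crossing domain), but this does not affect the argument; also, the justification that $\partial_{\nu}\Phi_\eta=-\partial_{\nu}g_{D'}$ on $\de$ should be phrased as the harmonic part being \emph{locally constant} on $D'\setminus\eta$ on the event $\mcalA_\bfb$, which is exactly what that event guarantees.
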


We first finish the proof of Theorem~\ref{thm:crossing} before going over the proof of Proposition~\ref{prop:computderivcrossing}. When computing the average of Equation~\eqref{eq:computderivcrossing} with Lemma~\ref{lem:dub}, the determinant term is that of\footnote{In this proof all occurrences of the Green's function $G_{D\setminus \eta}$ should be understood as the operator $(G_{D\setminus \eta})_{\de}^{\de}$ on $L^2(\de)$, similarly for $G_{D\setminus \eta}$: we drop the subscripts and superscripts to lighten the notation.}
\begin{equation}
I - G_{D \setminus \eta}(H_{\partial D'} - H_{\partial (D' \setminus \eta)})_{\mcalB_l'}^{\mcalB_l'} - G_{D \setminus \eta}(H_{\partial D'} - H_{\partial (D' \setminus \eta)})_{\mcalB_r'}^{\mcalB_r'} - 2 G_{D \setminus \eta}(H_{\partial D'})_{\mcalB_l'}^{\mcalB_r'}.
\end{equation}
This can again be understood with another decomposition formula for Brownian loops. One can further decompose the expressions obtained after applying Lemma~\ref{lem:idloops} by distinguishing on which side the Brownian path leaves and hits $\de$:
\begin{align*}
G_D - G_{D \setminus \eta} = G_{D \setminus \eta} \paren{(H_{\partial D'} - H_{\partial (D' \setminus \eta)})_{\mcalB_l'}^{\mcalB_l'} - (H_{\partial D'} - H_{\partial (D' \setminus \eta)})_{\mcalB_r'}^{\mcalB_r'} - 2 (H_{\partial D'})_{\mcalB_l'}^{\mcalB_r'}} G_D.
\end{align*}
From here, Proposition~\ref{prop:detloopsoup} once again provides that the determinant term in the Gaussian integration is equal to
\[
m_{D}(\eta, D \setminus D').
\]
This also shows that
\[
G_{D \setminus \eta}^{1/2} (I - G_{D \setminus \eta}^{1/2}MG_{D \setminus \eta}^{1/2})^{-1} G_{D \setminus \eta}^{1/2} = G_D
\]
and the exponential term is equal to
\begin{equation}\label{eq:exponentialterm}
\exp(\inv{2} ((\partial_{\nu_l} + \partial_{\nu_r}) (g_{D'}), G_D ((\partial_{\nu_l} + \partial_{\nu_r}) g_{D'}))_{\de})=\exp(-\inv{2} ((\partial_{\nu_l} + \partial_{\nu_r}) g_{D'}, G_D ((\partial_{\nu_l} + \partial_{\nu_r}) (g_{D}-g_{D'})))_{\de}),
\end{equation}
given that the gradient of $g_D$ is single valued. 
Now we need to invert $(G_D)_{\de}^{\de}$, i.e. the operator
\[
f \mapsto \int_{\de} G_D(\cdot, x) f(x) \ldx
\]
that from a function in $L^2(\de)$ outputs a continuous function in $D$, harmonic in $D \setminus \de$, with values $0$ on $\partial D$ and whose normal derivative jumps by $f(x)$ across $\de$ at $x$. If one now replaces $g_{D'}$ with the multivalued function $g_{D',m}$ that is equal to $g_D + 2 n \lam$ in every branch in $D'$, then $g_D - g_{D',m}$ is single-valued and satisfies the conditions above\footnote{In fact the inverse of the Green's function here is also a Neumann-type operator that computes the difference of the normal derivatives of the harmonic extensions of $f$ on both sides of $\de$.}: it follows that $G_D ((\partial_{\nu_l} + \partial_{\nu_r}) (g_D - g_{D',m})) = g_D - g_{D',m}$. We rewrite Equation~\eqref{eq:exponentialterm} as
\[
\exp(-\inv{2}((g_D - g_{D',m}), (\partial_{\nu_l} + \partial_{\nu_r})g_{D'})_\de) = \exp(\inv{2}\int_{D'} \nabla(g_{D', m} -g_{D}) \cdot \nabla g_{D'})
\]
and to conclude it suffices to note that
\begin{align*}
\int_{D'} \nabla(g_{D', m} -g_{D}) \cdot \nabla g_{D'} = \int_{\partial D'} g_{D'} \cdot \partial_{\nu} (g_{D'} -g_{D}) = 4 \lam^2 \int_{\partial D_l'} \partial_{\nu} (g_{D'} -g_{D}) = \norm{\nabla g_D'}_{D', \reg}^2 - \norm{\nabla g_D}_{D', \reg}^2
\end{align*}
given that $g_D$ has monodromy $2\lam$ and $\partial_{\nu_l}g_D = - \partial_{\nu_r}g_D$ on $\de \subset \partial D_l'$.
\end{proof}

We conclude with the proof of Proposition~\ref{prop:computderivcrossing}.

\begin{proof}[Proof of Proposition~\ref{prop:computderivcrossing}]
    The proof starts identically to that of Proposition~\ref{prop:computderiv}, up to the computation of $((u_\xi^\eps)_\eta, (u_\xi^\eps)_\eta)_{\nabla(D')}$. Write 
    \[
        u_\xi(z) = \int_{\delta} H_{D \setminus \de}(z,\di y) \xi(y) = \underbrace{\int_{\mcalB_l'} H_{D'}(z,\di y) \xi(y)}_{=:u_l(z)} + \underbrace{\int_{\mcalB_r'} H_{D'}(z,\di y) \xi(y)}_{{=:u_r(z)}}
    \]
    We still have that $u_\xi = (u_\xi^\eps)_{\eta} + v_\eta$, with $v_\eta$ defined as
    \[
    v_\eta(z)= \int_{\mcalB_l'} H_{D_l'}(z,\di y) \xi(y) + \int_{\mcalB_r'} H_{D_r'}(z,\di y) \xi(y).
    \]
    Similarly to the proof of Proposition~\ref{prop:computderiv}, we assume that $\xi$ is smooth, and compute
    \begin{align*}
        ((u_\xi^\eps)_\eta, (u_\xi^\eps)_\eta)_{\nabla(D')} &= (v_\eta, v_\eta)_{\nabla(D')} - (u_l, u_l)_{\nabla(D')} - (u_r, u_r)_{\nabla(D')} + 2 (u_l, \partial_\nu u_r)_{\mcalB'} \\
        &= \int_{\mcalB_l'} \xi \cdot (H_{\partial D'}-H_{\partial D_l'})_{\mcalB_l'}^{\mcalB_l'} \xi + \int_{\mcalB_r'} \xi \cdot (H_{\partial D'}-H_{\partial D_r'})_{\mcalB_r'}^{\mcalB_r'} \xi + 2 \int_{\mcalB_l'} \xi \cdot (H_{\partial D'})_{\mcalB_l'}^{\mcalB_r'} \xi,
    \end{align*}
    seeing $\xi$ as an element of $H^{-1}(\mcalB_l')$ or $H^{-1}(\mcalB_r')$.

    The computation of the term $(\Phi_\eta, u_\xi^\eps)_{\nabla(D')}$ is identical to Equation~\eqref{eq:somerandomeq}, taking into account that $\mcalB_l'$ and $\mcalB_r'$ are seen as distinct parts of the boundary of $D'$ (they appear separately when applying the divergence theorem):
    \[
        (\Phi_\eta, u_\xi^\eps)_{\nabla(D')} = -\int_{\de} \xi \cdot \partial_\nu h_\eta =  -\int_{\mcalB_l'} \xi \cdot \partial_\nu h_\eta - \int_{\mcalB_r'} \xi \cdot \partial_\nu h_\eta = - (\partial_\nu g_{D'}, \xi)_{\mcalB_l'} - (\partial_\nu g_{D'}, \xi)_{\mcalB_r'}. \qedhere
    \]
\end{proof}

\subsubsection{Corollaries for annular domains}
%
    The conditioning on the events $\mcalA_\bfb$ corresponds loosely speaking to two requirements: that the level-line $\eta$ of $\mbbP_D^{g_\bfb}$ winds around $\bin$ (the boundary component containing the target point $y$) a number of times given by $\sbin$, and that for each $1 \leq j \leq n$ it winds around the other boundary components $\mcalB_j$ a number of times given by $b_j$. While the latter conditioning is necessary in general, the former turns out to be superfluous: indeed, the monodromy of $g_\bfb$ already fixes the winding of $\eta$ around $\bin$, in the sense that the level line of $\mbbP_D^{g_\bfb}$ almost surely satisfies the property that its winding around $\bin$ is given by $\sbin$. We refer to Appendix~\ref{app:existencelevelline} for the proof of this fact.
%
In particular, when $D$ is a conformal annulus (corresponding to $n=0$), there are no other boundary components than those containing the target points $x$ and $y$ and thus no conditioning in the construction of the $\sle_4$ measure. We hence get the following corollary:
\begin{corollary}
    When $D$ is a conformal annulus, we have that
    \[
        Z_D(x,y) = \sum_{\sbin \in \Z} \exp(-\inv{2} \normreg{g_{\sbin}}^2),
    \]
    and $\nu_D$ corresponds to the law of a level-line of a compactified GFF $\Phi=\Phi_D^0 + g$, where $g$ is a random multivalued harmonic function in $D$, independent of $\Phi_D^0$, taking values $g=g_{\sbin}$ with probability $Z_{D, \sbin}/\sum_{\sbin' \in \Z} Z_{D,\sbin'}$.
\end{corollary}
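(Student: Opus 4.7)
The plan is to specialise the general crossing construction to the annular case, where the index set $\Lambda^{n+1}$ of Theorem~\ref{thm:crossing} collapses dramatically. With $n=0$ one has $\Lambda^{n+1} = \Z$, and the topological event $\mcalA_\sbin$ records only the winding of the curve around the inner boundary $\bin$. So the sums in Theorem~\ref{thm:crossing} and Corollary~\ref{cor:partitionfunctioncrossing4} are indexed by a single integer $\sbin$, and both the partition-function identity and the mixture statement reduce to verifying a single fact: that for each $\sbin \in \Z$ the generalised level-line under $\mbbP_D^{g_\sbin}$ actually lies in $\mcalA_\sbin$ almost surely.

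This is precisely the content of the statement recalled in the paragraph preceding the corollary and proved in Appendix~\ref{app:existencelevelline}: the monodromy $2\lam\,\sbin$ encoded in the multivalued harmonic function $g_\sbin$ forces the winding of the level-line around $\bin$ to equal $\sbin$ with full probability, in the spirit of \cite[Lemma 16]{aru_bounded-type_2019}. Granting this input --- which is really the only nontrivial step, and the reason the corollary is not entirely immediate from the theorem as stated --- we get $\mbbP_D^{g_\sbin}(\mcalA_\sbin)=1$. Substituting into Corollary~\ref{cor:partitionfunctioncrossing4} then yields
\[
Z_D(x,y) = \sum_{\sbin \in \Z} \mbbP_D^{g_\sbin}(\mcalA_\sbin)\exp(-\inv{2}\normreg{g_\sbin}^2) = \sum_{\sbin \in \Z} \exp(-\inv{2}\normreg{g_\sbin}^2),
\]
and (as the individual terms decay exponentially in $|\sbin|$, cf.\ the remark after Corollary~\ref{cor:partitionfunctioncrossing4}) this sum is finite.

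For the probabilistic description, I would start from $\nu_D = \sum_{\sbin \in \Z} \ind{\eta \in \mcalA_\sbin}\,\mbbP_D^{g_\sbin}\cdot Z_{D,\sbin}$ of Theorem~\ref{thm:crossing} and use the same a.s.\ identification to drop the indicator, so that after normalising by $Z_D(x,y)$ the probability measure $\nu_D^{\#}(x,y)$ becomes the mixture of the $\mbbP_D^{g_\sbin}$ with weights $p_\sbin := Z_{D,\sbin}/Z_D(x,y)$. To rephrase this as a single compactified GFF, I would sample a multivalued harmonic function $g$ with $\mbbP(g = g_\sbin) = p_\sbin$ independently of a zero-boundary GFF $\Phi_D^0$, set $\Phi := \Phi_D^0 + g$, and use the coupling of Section~\ref{subsec:coupling} together with the almost-sure identification above to see that conditionally on $\{g = g_\sbin\}$ the generalised level-line of $\Phi$ has law $\mbbP_D^{g_\sbin}$. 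Integrating out $g$ recovers $\nu_D^{\#}(x,y)$, which concludes the proof.
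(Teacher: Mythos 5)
Your proposal is correct and follows essentially the same route as the paper: the corollary is obtained by specialising Theorem~\ref{thm:crossing} and Corollary~\ref{cor:partitionfunctioncrossing4} to $n=0$, where the only topological conditioning is on the winding around $\bin$, and then invoking the fact (proved in Appendix~\ref{app:existencelevelline}) that the monodromy of $g_{\sbin}$ already forces this winding almost surely, so that $\mbbP_D^{g_{\sbin}}(\mcalA_{\sbin})=1$ and the indicator can be dropped. Your subsequent repackaging of the resulting mixture as the level line of a single compactified GFF with random monodromy is exactly the intended reading of the statement.
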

The law of the winding of the curve under $\nu_D$ can then be computed explicitly.
\begin{corollary}\label{cor:windingsle}
    For $\al$ the representative of $(\arg y - \arg x)/(2\pi)$ in $[0,1)$, it holds that
    \begin{equation}\label{eq:wind1}
        \nu_{\ann{p}}^\#(x,y)[\eta \in \mcalA_k]= \frac{Z_{D,k}}{Z_D} = \frac{e^{-\frac{\pi^2}{2p} \cdot (k+\al)^2}}{e^{-\frac{\pi^2}{2p}\al^2}\vartheta_3(i \frac{\pi}{2p} \al | i \frac{\pi}{2p})}, \quad k \in \Z,
    \end{equation}
    where $\vartheta_3(z | \tau) = \sum_{k\in \Z} e^{\pi i k^2 \tau}e^{2 \pi i k z}$ is a Jacobi theta function.
\end{corollary}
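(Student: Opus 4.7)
The plan is as follows. First, the equality $\nu_{\ann{p}}^\#(x,y)[\mcalA_k] = Z_{D,k}/Z_D$ follows from the preceding corollary: in the annular case ($n=0$), the level line of $\mbbP_D^{g_k}$ is a.s.\ in $\mcalA_k$ (the monodromy of $g_k$ already fixes the winding of $\eta$ around $\bin$ to be $k$; cf.\ the paragraph preceding the corollary and Appendix~\ref{app:existencelevelline}), so $\nu_D^\#(\mcalA_k)=Z_{D,k}/Z_D$. The task therefore reduces to evaluating $Z_{D,k}=\exp(-\tfrac{1}{2}\normreg{g_k}^2)$ up to a $k$-independent factor.

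The crucial observation is that the lifts $\tilde g_k$ and $\tilde g_0$ on the strip $S_p$ differ only by a linear correction. Indeed, the bottom boundary data agree, and on the top both staircases share the same jump set $\{2\pi(\al+k+m):m\in\Z\}=\{2\pi(\al+m):m\in\Z\}$, but with values uniformly shifted by $-2k\lam$. Hence $\tilde g_k-\tilde g_0$ is bounded and harmonic on $S_p$ with constant boundary values $0$ on $\Im z=0$ and $-2k\lam$ on $\Im z=p$, so by uniqueness for the Dirichlet problem on the strip (both $\tilde g_k$ and $\tilde g_0$ share the linear growth $\tfrac{\lam}{\pi}\Re z$ and the same singular parts at every jump point),
\[
  \tilde g_k(z)=\tilde g_0(z)-\frac{2\lam k}{p}\Im z.
\]

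I would then expand $|\nabla\tilde g_k|^2=|\nabla\tilde g_0|^2-\tfrac{4\lam k}{p}\partial_y\tilde g_0+\tfrac{4\lam^2 k^2}{p^2}$. Since the correction $-\tfrac{2\lam k}{p}\Im z$ is smooth up to the boundary, the logarithmic renormalisation counterterms at the (common) lifted singularities agree for $\tilde g_k$ and $\tilde g_0$ and cancel in the subtraction, giving over any fundamental domain $F$
\[
  \normreg{g_k}^2-\normreg{g_0}^2=-\frac{4\lam k}{p}\int_F\partial_y\tilde g_0\,\mathrm{d}A+\frac{4\lam^2 k^2}{p^2}|F|.
\]
Taking $F=(-\pi,\pi)\times(0,p)$, the cross integral is finite (since $\partial_y\tilde g_0$ has only $1/r$-type singularities) and by Fubini equals $\int_{-\pi}^\pi[\tilde g_0(x,p)-\tilde g_0(x,0)]\,\mathrm{d}x$: the bottom contribution vanishes by antisymmetry of the staircase about $0$, while a direct case analysis of the top staircase (giving the same answer for $\al\le 1/2$ and $\al>1/2$) produces $-4\pi\lam\al$. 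Using $\lam^2=\pi/8$, this reduces to
\[
  \normreg{g_k}^2-\normreg{g_0}^2=\frac{\pi^2\bigl((k+\al)^2-\al^2\bigr)}{p},
\]
whence $Z_{D,k}=Z_{D,0}\,e^{\pi^2\al^2/(2p)}\,e^{-\pi^2(k+\al)^2/(2p)}$.

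Summing over $k$ and dividing gives the claimed formula after recognising
\[
  \sum_{k\in\Z}e^{-\pi^2(k+\al)^2/(2p)}=e^{-\pi^2\al^2/(2p)}\,\vartheta_3\bigl(i\tfrac{\pi}{2p}\al\,\big|\,i\tfrac{\pi}{2p}\bigr)
\]
by completing the square inside the defining series of $\vartheta_3$. The main obstacle I anticipate is the careful treatment of the divergent Dirichlet energies; this is resolved by the key observation above, which ensures that the two divergences cancel exactly in the subtraction, so that only the finite, explicitly computable difference matters.
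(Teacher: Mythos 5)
Your proof is correct and follows essentially the same route as the paper: both identify $g_k = g_0 + k f$ with $f$ the harmonic function of monodromy (which is $2\lam\log\abs{z}/p$ on the annulus, i.e.\ $-2\lam k\,\Im z/p$ in your strip picture), note that the renormalisation counterterms cancel in the difference, and expand the Dirichlet energy in $k$. The only divergence is in evaluating the cross term: the paper deduces $(g_0,f)_{\nabla(D)}=\pi^2\al/p$ from an affineness-in-$\al$ symmetry argument, whereas you compute it directly via Fubini and the explicit staircase boundary values, arriving at the same $-4\pi\lam\al$ and hence the same quadratic form $\frac{\pi^2}{p}(k+\al)^2$.
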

\begin{proof}
    The event $\mcalA_{\sbin}$ exactly corresponds to the event that the index of $\eta$ around $y$ equals $\sbin+\al$ (with the index defined consistently with $\al$), so that it suffices to compute $Z_{D, \sbin}$ to identify the desired probabilities. Notice that $g_{\sbin}=g_0 + \sbin f$, for $f=2 \lam \log \abs{z}/p$. Thus,
    \begin{align*}
        \normreg{g_{\sbin}}^2 &= \normreg{g_0}^2 + \sbin^2 \normh{f}^2 + 2 \sbin (g_0, f)_{\nabla(D)} \\
        &= \normreg{g_0}^2 + \sbin^2 \cdot \frac{\pi^2}{p} + 2 \sbin (g_0, f)_{\nabla(D)}
    \end{align*}
    The term $(g_0, f)_{\nabla(D)}$ is a function of $\al$, and the dependence is affine, given that $g_0$ can be constructed from the case $\al=0$ by precomposing by a rotation that is a function of the modulus of the input. Since at $\al=1$ it must be equal to $\pi^2/p$, we deduce that
    \[
        \normreg{g_{\sbin}}^2 = \frac{\pi^2}{p} \cdot (\sbin+\al)^2 + C. \qedhere
    \]
\end{proof}


\section{\texorpdfstring{$\slek$}{SLE kappa} as a mixture of boundaries of restriction measures and CLE}\label{sec:slek}

We now turn to a more general construction of $\slek$ in multiply connected domains for the whole range $\kappa \in (8/3,4]$. Here the constructions and proofs are simpler than in the previous section, but the philosophy of having to separate topological events remains the same. The constructions here are specific to the non-crossing case. 

We start by recalling classical facts about restriction measures and CLEs and then state and prove our construction.

\subsection{Preliminaries}

\subsubsection{Restriction measures and Poisson point processes of Brownian excursions}\label{subsec:restriction}
We first recall the definition of one-sided restriction measures, as introduced in \cite{lawler_conformal_2003}. For $D$ a bounded simply connected domain and distinct $x, y \in \partial D$, consider a probability measure $\mbbP$ on closed $R \subset \overline{D}$ such that $R \cap \partial D = (yx)$ and $D \setminus R$ is connected. We say that $\mbbP$ is a \emph{one-sided restriction measure} with exponent $\al$ if there exists $\al \geq 0$ such that for all closed $K \subset \overline{D}$ with $K \cap (yx)=\emptyset$ and $D \setminus K$ is simply connected\footnote{The $\sigma$-field we consider is generated by the events $R \cap K = \emptyset$, which is the same as the Borel $\sigma$-field generated by the Hausdorff metric on closed subsets $\overline D$, see \cite[Section 3]{lawler_conformal_2003}.},
\[
\mbbP(R \cap K = \emptyset)=(\Phi_K'(x)\Phi_K'(y))^\al=\paren{\frac{H_{\partial (D \setminus K)}(x,y)}{H_{\partial D}(x,y)}}^\al,
\]
where $\Phi_K: D\setminus K \to D$ is any conformal equivalence with $\Phi_K(x)=x, \Phi_K(y)=y$.
These restriction measures are conformally invariant and can be constructed in several ways (see for instance \cite{werner_conformal_2005} for a survey). We will use the construction involving Brownian excursions, which generalises nicely to the multiply-connected setting.
Consider $(D,x,y)$ a (non-crossing) admissible domain and a Poisson point process (PPP) of intensity given by $h$ (calling $h$ the \emph{intensity constant}) times the (infinite) measure of Brownian excursions from $\mcalB_l$ in $D$
\begin{equation}\label{eq:defbrownianexcursions}
\pi \iint_{\mcalB_l \times \mcalB_l} \mu_{\partial D}(z,w) \ld{z} \ld{w},
\end{equation}
where $\mu_{\partial D}(z,w) = H_{\partial D}(z,w) \mu_{\partial D}^\#(z,w)$ is the Brownian measure on excursions\footnote{We refer to \cite[Section 3.3]{BrownianLoopSoup} once again for the precise definition of these measures and the fact that they satisfy conformal invariance and the restriction property, which is all we need for this section. The restriction property states that $\mu_{\partial D}(z,w)$ restricted to curves staying in a subdomain $D' \subset D$ agreeing with $D$ near $z$ and $w$ coincides with $\mu_{\partial D'}(z,w)$.} from $x$ to $y$ in $D$.
Denote by $R_{h}^{D, \mcalB_l}$ its filling, i.e. the complement of the connected component of $D$ minus a sample of the PPP containing $\mcalB_r$.
In the case where $D$ is simply connected, $R_h^{D, \mcalB_l}$ gives rise to the one-sided restriction measure \cite{werner_conformal_2005} \footnote{The factor of $\pi$ in Equation~\eqref{eq:defbrownianexcursions} is due to our convention for the Poisson kernel, in order for $R_{h}^{D, \mcalB_l}$ to be a one-sided restriction measure of exponent $h$ as introduced in \cite{lawler_conformal_2003}. Indeed, when $D$ is simply connected, by Equation~\eqref{eq:dubproof}, Equation~\eqref{eq:restrictionequality} becomes
\[
\paren{\frac{H_{\partial D'}(x,y)}{H_{\partial D}(x,y)}}^h,
\]
i.e. $R_{h}^{D, \mcalB_l}$ is a one-sided restriction measure of exponent $h$. The identification of the relation between the intensity constant of the PPP and the value of the parameter of the corresponding restriction measure was also carried out in \cite[Theorem 3.7]{lupu_level_2024}.}. The key is the following calculation, which generalises to the multiply-connected setting and that we will make use of in the proof of the Theorem below.
\begin{lemma}\label{lem:excursionproba}
    In the setting above and for $D' \subset D$ such that $D \setminus D'$ is at positive distance from $\mcalB_l$, the probability that all loops in $R_{h}^{D, \mcalB_l}$ stay in $D'$ is equal to
    \begin{equation}\label{eq:restrictionequality}
        \exp(-\frac{h \pi}{4} (\norm{u_{D'}}_{\nabla(D'), \reg}^2 - \norm{u_D}_{\nabla(D), \reg}^2)),
    \end{equation}
    where $u_D$ (resp. $u_{D'}$) is harmonic with boundary values $-1$ on $\mcalB_l$, $+1$ on the rest of $\partial D$ (resp. $\partial D'$).
\end{lemma}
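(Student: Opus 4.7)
The plan is a direct void-probability computation: the probability that a Poisson point process has no points in a measurable set equals the exponential of minus the mass of that set, so the task reduces to computing the mass of excursions from $\mcalB_l$ to $\mcalB_l$ in $D$ that intersect $D\setminus D'$, and then identifying it with the Dirichlet-energy expression from Section~\ref{subsec:dirichlet}.

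First, I would note that since $\mcalB_l$ is at positive distance from $D\setminus D'$, every excursion of the PPP that avoids $D\setminus D'$ is automatically contained in $\overline{D'}$, and conversely: in that case $D\setminus\overline{D'}$ lies in the connected component of $D$ minus the PPP containing $\mcalB_r$, hence is disjoint from the filling $R_h^{D,\mcalB_l}$. Thus the event ``all loops in $R_h^{D,\mcalB_l}$ stay in $D'$'' coincides with the event that the sub-PPP consisting of excursions intersecting $D\setminus D'$ is empty. The void-probability formula then reduces the lemma to evaluating
\[
    m := \text{total mass of excursions from } \mcalB_l \text{ to } \mcalB_l \text{ in } D \text{ that intersect } D\setminus D'.
\]

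The second step is to apply the restriction property of the Brownian excursion measure: excursions from $z$ to $w$ in $D$ that remain in $D'$ are exactly the excursions of $\mu_{\partial D'}$, and hence have total mass $H_{\partial D'}(z,w)$. Consequently the mass of those intersecting $D\setminus D'$ equals $H_{\partial D}(z,w)-H_{\partial D'}(z,w)$. Integrating against arclength on $\mcalB_l\times\mcalB_l$ and invoking the identity
\[
    \norm{u_{D'}}_{\nabla(D'),\reg}^2 - \norm{u_D}_{\nabla(D),\reg}^2 = 4\iint_{\mcalB_l\times\mcalB_l}\paren{H_{\partial D}-H_{\partial D'}}(z,w)\,\ld{z}\ld{w}
\]
recalled in Section~\ref{subsec:dirichlet} gives $m=\tfrac14\paren{\norm{u_{D'}}_{\nabla(D'),\reg}^2-\norm{u_D}_{\nabla(D),\reg}^2}$. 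Multiplying by the intensity constant $h$ and the prefactor $\pi$ from \eqref{eq:defbrownianexcursions} and exponentiating yields the formula in the statement.

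There is no genuine obstacle in this argument; the only care required is constant bookkeeping, so that the $\pi$ in the definition of the PPP's intensity combines with the $4$ in the Dirichlet-energy identity to produce the final prefactor $h\pi/4$.
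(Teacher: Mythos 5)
Your proposal is correct and follows essentially the same route as the paper: the number of excursions of the PPP that exit $D'$ is Poisson with parameter $h\pi\iint_{\mcalB_l\times\mcalB_l}(H_{\partial D}-H_{\partial D'})$ by the restriction property of the excursion measure, and this is rewritten via the identity of Section~\ref{subsec:dirichlet} as $\tfrac{h\pi}{4}(\norm{u_{D'}}_{\nabla(D'),\reg}^2-\norm{u_D}_{\nabla(D),\reg}^2)$. Your additional remark identifying the event on the filling with the emptiness of the sub-PPP of excursions meeting $D\setminus D'$ is a harmless (and welcome) clarification that the paper leaves implicit.
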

\begin{proof}
    By definition of the PPP and the restriction property satisfied by the excursion measures, the random variable that counts the loops in a realisation of the PPP that exit $D'$ is Poisson with parameter
    \[
        h \pi \iint_{\mcalB_l' \times \mcalB_l'} (H_{\partial D}(x,y) - H_{\partial D'}(x,y)) \ldx \ldy.
    \]
    As in Section~\ref{subsec:dirichlet}, this might be rewritten as
    \[
        \frac{h \pi}{4} (\norm{u_{D'}}_{\nabla(D'), \reg}^2 - \norm{u_D}_{\nabla(D), \reg}^2). \qedhere
    \]
\end{proof}
The same construction can be replicated when replacing $\mcalB_l$ with $\mcalB_\bfb^-$ for $\bfb \in \inset{\pm \lam}^n$.

\subsubsection{Conformal Loop Ensembles}\label{subsec:cle}
The Conformal Loop Ensembles (CLE) are another one-parameter family of conformally invariant measures, this time on collections of non-nested disjoint simple loops in a domain $D$ for the parameter range $8/3 < \kappa \leq 4$. There also admit various constructions (see \cite{sheffieldConformalLoopEnsembles2012}), but we will see them here as the collection of the outermost boundaries of clusters in a Brownian loop-soup, i.e. a Poisson point process with respect to the Brownian loop measure, with intensity $\al=\cc(\kappa)/2$, $0 < \al \leq 1/2$. We stress that the intensity parameter in \cite{sheffieldConformalLoopEnsembles2012, lawler_partition_2009} and the central charge $\cc$ differ by a factor of $1/2$: see also \cite{lupu_convergence_2018} for a more thorough explanation.

The CLE also satisfies a form of conformal restriction: for $D' \subset D$, consider the set $\Gamma'$ obtained from a realisation $\Gamma$ of the CLE in $D$ consisting of all the loops that intersect $D'$, and let $D_\Gamma'$ be the random set given by the union of $D'$ with the filling of $\Gamma'$. Then, conditionally on $D_\Gamma'$, the law of the loops of the CLE that do not intersect $D_\Gamma'$ is that of a CLE in $D \setminus D_\Gamma'$.

\subsection{Constructing the non-crossing \texorpdfstring{$\slek$}{SLE kappa}}

We work with the notation of the previous section, only redefining the parameters $\bfb$ to belong to $\inset{\pm 1}^n$ (the purpose of the factor $\lam$ was just to make a notational connection with the boundary conditions of the GFFs, but all that matters is the sign of $\bfb$). We will also consider for $D$ an admissible domain and $\bfb \in \inset{\pm 1}^n$ the harmonic function $u_\bfb$ given by the boundary values $-1$ on $\mcalB_\bfb^-$, $+1$ on $\mcalB_\bfb^+$, so that $u_\bfb = g_\bfb / \lam$ from the previous section.

Consider $R$ a sample of $R_{h}^{D,\mcalB_\bfb^-}$, $\Gamma$ a sample of an independent $\cle_{\kappa}^D$, and $C$ the union of $R$ with all the loops of $\Gamma$ that it intersects. We let $\mbbP_D^\bfb$ be the law of the right boundary of the component of $C$ that contains the arc $(yx)$, i.e. the boundary of its filling minus $(yx)$. This is a simple curve in $D$ (see \cite{werner_clekappa_2013}, just before Section 3, for an argument in the simply connected case: the general case is identical).
In other words, $\mbbP_D^\bfb= F_\ast(\cle_{\kappa}^D \otimes R_{h}^{D,\mcalB_\bfb^-})$, where $F$ identifies the path according to the construction above given a realisation of the $\cle$ and of the PPP of Brownian excursions.

\begin{theorem}\label{thm:noncrossingslekappa}
    Let $(D,x,y)$ be a non-crossing admissible domain and $8/3 < \kappa \leq 4$. The measure $\nu_D$ on simple curves from $x$ to $y$ in $D$ defined as
    \begin{equation}\label{eq:defnoncrossingslekappameasure}
        \nu_D := \sum_{\bfb \in \inset{\pm 1}^n} \ind{\eta \in \mcalA_\bfb}\mbbP_D^{\bfb} \cdot Z_{D,\bfb},
    \end{equation}
    where
    \begin{equation}\label{eq:defzk}
        Z_{D,\bfb} = \exp(- \frac{\pi h}{4} \normreg{u_\bfb}^2),
    \end{equation}
    is the measure of $\sle_\kappa$ in $(D,x,y)$.
\end{theorem}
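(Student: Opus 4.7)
Following the strategy used for $\sle_4$, the theorem is proved by verifying the restriction property of Definition~\ref{def:sle}.

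The base case $n = 0$ (simply connected $D$) is immediate: Werner-Wu's theorem~\cite{werner_clekappa_2013} identifies $\mbbP_D^\emptyset$ with the $\sle_\kappa$ probability measure in $D$, and Equation~\eqref{eq:dubproof} gives $Z_{D, \emptyset} = \exp(-\frac{\pi h}{4}\normreg{u_\emptyset}^2) = H_{\partial D}(x,y)^h$, matching the standard $\sle_\kappa$ partition function.

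For the general case, let $D'' \subset D$ be a smooth simply-connected test domain agreeing with $D$ near $x$ and $y$. Since $D''$ is simply connected, there is a unique $\bfb' \in \inset{\pm 1}^n$ determined by the topology of $D''$, and $\inset{\eta \subset D''} \subset \mcalA_{\bfb'}$. Hence only this $\bfb'$ contributes to $\nu_D$ on $\inset{\eta \subset D''}$, and the identity to check reduces to
\[
Z_{D, \bfb'}\, \frac{\di \mbbP_D^{\bfb'}}{\di \mbbP_{D''}^{\emptyset}}(\eta)\,\ind{\eta \subset D''} = Z_{D''}\,\exp\!\big(-\tfrac{\cc}{2}m_D(\eta, D \setminus D'')\big),
\]
where $Z_{D''} \mbbP_{D''}^{\emptyset}$ is by the simply-connected case the $\sle_\kappa$ measure in $D''$.

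The plan is to compute this Radon-Nikodym derivative by coupling the two constructions through the restriction properties of the Brownian excursion measure and the Brownian loop soup. Realising $\cle_\kappa^D$ as outer boundaries of outermost clusters of a Brownian loop soup in $D$ of intensity $\cc/2$, the restriction property of the loop measure decomposes it into a loop soup in $D''$ (that produces $\cle_\kappa^{D''}$) and an independent PPP of loops intersecting $K := D \setminus D''$. Similarly, the excursion PPP of intensity $h$ on $\mcalB_{\bfb'}^-$ in $D$ splits, by the restriction property of the excursion measure, into the excursion PPP of the same intensity on $(yx)$ in $D''$ and an independent PPP of extras (namely $(yx)$-excursions in $D$ exiting $D''$, and excursions attached to each $\mcalB_j$ with $b_j' = -1$).

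Under this coupling, on $\inset{\eta \subset D''}$ the cluster constructed in $D$ must fit inside $D''$ and therefore coincides with the one constructed in $D''$, so the two right-boundary curves agree. The Radon-Nikodym derivative then equals the conditional probability, given $\eta$, that the extras do not perturb the cluster, which factorises into two independent contributions:
\begin{itemize}
\item A loop-soup contribution equal to $\exp\!\big(-\tfrac{\cc}{2}m_D(\eta, K)\big)$: a Brownian loop that would perturb the cluster must intersect both the cluster and $K$, and such a loop must cross the cluster's right boundary $\eta$ -- the relevant mass therefore equals $m_D(\eta, K)$.
\item An excursion contribution, computed using Lemma~\ref{lem:excursionproba}: the probability that the extra excursions remain compatible with $\inset{\eta \subset D''}$ (namely that no $(yx)$-excursion in $D$ exits $D''$ and that excursions from each $\mcalB_j$, $b_j'=-1$, respect the left-right topology fixed by $\mcalA_{\bfb'}$) rewrites, via the interpretation of differences of regularised Dirichlet energies as Brownian excursion masses from Section~\ref{subsec:dirichlet}, as an exponential of a regularised Dirichlet-energy difference identifiable with $Z_{D''}/Z_{D, \bfb'}$.
\end{itemize}
Multiplying the loop-soup and excursion factors and the prefactor $Z_{D, \bfb'}/Z_{D''}$ yields exactly $\exp\!\big(-\tfrac{\cc}{2}m_D(\eta, K)\big)$, which is the desired restriction identity.

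The main obstacle lies in the bookkeeping for the excursion contribution: one must identify precisely which excursions on $\mcalB_{\bfb'}^-$ are constrained on $\inset{\eta \subset D''}$, and recognise the resulting total excursion mass as the regularised Dirichlet-energy difference between $u_{\bfb'}$ on $D$ (which defines $Z_{D, \bfb'}$) and $u_{D''}$ on $D''$ (which defines $Z_{D''}$ via Equation~\eqref{eq:dubproof}). This step is what dictates the specific weighting by $Z_{D, \bfb'}$ in the definition of $\nu_D$, and it relies crucially on the explicit relations between Poisson kernels, Brownian excursion masses, and Dirichlet energies collected in Section~\ref{subsec:dirichlet}.
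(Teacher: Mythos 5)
Your plan has the right ingredients (the product structure $\cle_\kappa\otimes R_h$, the restriction properties of the loop and excursion measures, Lemma~\ref{lem:excursionproba}, and the base case via \cite{werner_clekappa_2013}), but it tests the restriction identity against a general simply connected test domain $D''$, and this is where it breaks. The paper deliberately does \emph{not} do this: it verifies the Radon--Nikodym formula only against domains $D'=D\setminus K$ with $K$ a hull \emph{attached to $\mcalB_r$}, i.e.\ removed entirely from the right of the curve, and then adds a separate argument that this restricted family of tests suffices (such comparisons determine the law of the left filling of the curve, and a simple curve is recovered from its left filling). With your choice of $D''$, the set $K=D\setminus D''$ has components on both sides of $\eta$, and the clean dichotomy ``either the curve built in $D$ equals the one built in $D''$, or it exits $D''$'' fails: an extra Brownian loop can hit both the cluster and the left part of $K$ while staying inside the left filling of $\eta''$ (hence not perturbing the right boundary at all), and conversely a loop hitting the left part of $K$ can cross $\eta''$ and deform the right boundary while keeping $\eta\subset D''$ and $\eta\in\mcalA_{\bfb'}$. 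So the perturbation event is neither equivalent to ``no loop hits both $\eta$ and $K$'' nor complementary to $\{\eta\subset D''\}$, and the claimed loop-soup factor $\exp(-\tfrac{\cc}{2}m_D(\eta,K))$ does not follow from the argument you give.

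The same problem is worse for the excursions. When $K$ is attached to $\mcalB_r$, the attachment set $\mcalB_\bfb^-$ is common to $D$ and $D'$, the extras are exactly the excursions from $\mcalB_\bfb^-$ hitting $K$, any such excursion forces $\eta\not\subset D'$, and their mass is the deterministic quantity $\tfrac{1}{4}(\normreg{u_{D',\bfb}}^2-\normreg{u_{D,\bfb}}^2)$ that cancels against $Z_{D',\bfb}/Z_{D,\bfb}$. In your setting the excursions attached to the components $\mcalB_j$ with $b_j'=-1$ have no counterpart in $D''$ at all; the event that they do not alter the right boundary of the $(yx)$-component depends on $\eta''$ and on the CLE configuration (they must not connect $\mcalB_j$ to the $(yx)$-cluster in a way that deforms its filling), so its conditional probability is a random, $\eta$-dependent quantity and cannot equal the deterministic constant $Z_{D''}/Z_{D,\bfb'}$ you need. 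This is not ``bookkeeping'': it is the reason the reduction to right-attached hulls is the essential missing idea in your proposal. If you incorporate that reduction (and justify why it characterises the measure), the rest of your computation matches the paper's proof.
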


From this construction we can infer the expression of the partition function for non-crossing $\sle_\kappa$.

\begin{corollary}\label{cor:partitionfunctionnoncrossingkappa}
    The partition function of $\sle_\kappa$ from $x$ to $y$ in $D$, $8/3<\ka\leq4$, can be expressed as
    \[
        Z_D(x,y):= \sum_{\bfb \in \inset{\pm 1}^n}\exp(-\frac{\pi h}{4} \normreg{u_\bfb}^2) \cdot \mbbP_D^\bfb(\mcalA_\bfb)
    \]
    and it is finite.
\end{corollary}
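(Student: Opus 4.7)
The plan is to verify that the family $\inset{\nu_D}$ from Equation~\eqref{eq:defnoncrossingslekappameasure} satisfies the boundary perturbation rule of Definition~\ref{def:sle}. Combined with the normalisation $Z_{\Hp}(0,+\infty)=1$, this identifies $\nu_D$ as the conformal restriction $\slek$ via the Carath\'eodory-type construction of \cite[Section~4]{lawler_defining_2011}. I would proceed by induction on the number $n$ of boundary components of $D$ distinct from $\mcalB$, establishing for every simply connected test domain $D'' \subset D$
\[
    \frac{\di \nu_{D''}}{\di \nu_{D}}(\eta) = \ind{\eta \subset D''} \exp\paren{\frac{\cc}{2} m_D(\eta, D \setminus D'')}.
\]
The base case $n=0$ is exactly the construction of $\slek^\#$ from \cite{werner_clekappa_2013}; the normalisation identity $Z_{D,\emptyset} = H_{\partial D}(x,y)^{h}$ then follows from Equation~\eqref{eq:dubproof} since $\exp(-\tfrac{\pi h}{4} \normreg{u_\emptyset}^2) = H_{\partial D}(x,y)^{h}$.

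For the inductive step, following the strategy of Theorem~\ref{thm:noncrossing}, fix $D''$, let $\bfb' \in \inset{\pm 1}^n$ be the unique label with $\inset{\eta \subset D''} \subset \mcalA_{\bfb'}$, and after the standard relabeling write $\bfb' = (-1,\dots,-1,+1,\dots,+1)$ with $k$ minus signs. Introduce the intermediate domain $D'$ equal to the union of $D''$ and the connected components of $D \setminus D''$ lying on the left of $\eta$; then $D'$ has strictly fewer other boundary components than $D$. By the inductive hypothesis applied in $D'$ and additivity of the Brownian loop measure, it suffices to show
\[
    \frac{\di \nu_{D'}}{\di \nu_D}(\eta) = \ind{\eta \subset D'} \exp\paren{\frac{\cc}{2} m_D(\eta, D \setminus D')}.
\]
The crucial observation is that for every $\bfb = (\bfb_k, +1, \dots, +1)$ with $\bfb_k \in \inset{\pm 1}^k$, the carrier set $\mcalB_\bfb^-$ of the Brownian excursion PPP is literally the same in $D$ and in $D'$.

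Using the restriction properties of the Brownian excursion and Brownian loop measures, I would couple the constructions of $\eta$ in $D$ and $D'$ on a common probability space so that $\mathcal{P}_{D'}$ and $\mathcal{L}_{D'}$ consist of exactly those excursions in $\mathcal{P}_D$ and loops in $\mathcal{L}_D$ that stay inside $D'$. Let $\eta_D, \eta_{D'}$ denote the corresponding output curves. The key claim is that, modulo null sets, $\inset{\eta_D \subset D'} = A_1 \cap A_2$ (with $\eta_D = \eta_{D'}$ on this event), where $A_1 = \inset{\text{no excursion of } \mathcal{P}_D \text{ exits } D'}$ and $A_2 = \inset{\text{no loop of }\mathcal{L}_D \setminus \mathcal{L}_{D'} \text{ meets } \eta_{D'}}$. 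Lemma~\ref{lem:excursionproba} gives $\mbbP(A_1) = Z_{D',\bfb_k}/Z_{D,\bfb}$, while the loops of $\mathcal{L}_D$ exiting $D'$ form an independent Poisson point process of intensity $(\cc/2) \mu_D^{\text{loop}}|_{\text{loops exiting }D'}$, so
\[
    \mbbP(A_2 \mid \eta_{D'}, A_1) = \exp\paren{-\frac{\cc}{2} m_D(\eta_{D'}, D \setminus D')}.
\]
Multiplying these two Poisson contributions and summing over $\bfb_k \in \inset{\pm 1}^k$ then produces the Radon--Nikodym derivative claimed above.

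The main obstacle is verifying the geometric identity $\inset{\eta_D \subset D'} = A_1 \cap A_2$ modulo null sets, together with $\eta_D = \eta_{D'}$ on this event. The key intermediate statement is that a loop $\ell \in \mathcal{L}_D \setminus \mathcal{L}_{D'}$ joins the $R$-cluster $C_{D'}$ if and only if it meets $\eta_{D'}$: indeed, such a loop has portions in $D \setminus D'$ which lie on the right of $\eta_{D'}$, so any intersection with $C_{D'} \subset D'$ forces $\ell$ to cross $\eta_{D'}$ by connectivity. Granting this, on $A_1 \cap A_2$ no additional loop joins the cluster and $\eta_D = \eta_{D'} \subset D'$; conversely, if some $\ell \in \mathcal{L}_D \setminus \mathcal{L}_{D'}$ joins the cluster it drags its $D \setminus D'$-portions into it, pushing the right boundary outside $D'$. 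Once this is settled, Corollary~\ref{cor:partitionfunctionnoncrossingkappa} is immediate, as $Z_D(x,y)$ is then a finite sum of bounded nonnegative terms.
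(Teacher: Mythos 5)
Your reduction of the corollary to the construction theorem is the right move: once the mixture $\nu_D$ of Equation~\eqref{eq:defnoncrossingslekappameasure} is identified as the conformal restriction $\slek$, the partition function is its total mass, a finite sum of the terms $Z_{D,\bfb}\,\mbbP_D^\bfb(\mcalA_\bfb)$, each finite, so finiteness is immediate. Your core computation also matches the paper's: the Radon--Nikodym derivative factors into two independent Poisson contributions, the excursions exiting $D'$ (whose absence has probability $Z_{D',\bfb_k}/Z_{D,\bfb}$ by Lemma~\ref{lem:excursionproba}) and the loops exiting $D'$ that meet the curve (a Poisson count of parameter $\tfrac{\cc}{2}m_D(\eta,D\setminus D')$), and your geometric claim that a loop exiting $D'$ joins the cluster iff it crosses $\eta_{D'}$ is the same observation the paper borrows from \cite{werner_clekappa_2013}.

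There is, however, a genuine gap in the inductive framework. You assert that $D'$, the union of $D''$ with the left components of $D\setminus D''$, ``has strictly fewer other boundary components than $D$.'' This fails precisely when $\bfb'=(-1,\dots,-1)$, i.e.\ when every $\mcalB_j$ lies to the left of curves in $D''$: then $D'$ retains all $n$ holes and the induction does not advance. In the $\sle_4$ proof of Theorem~\ref{thm:noncrossing} this degenerate case is handled by ``reasoning on the right instead,'' but that symmetry is unavailable here: the measures $\mbbP_D^\bfb$ are built from excursions anchored on $\mcalB_\bfb^-$ only, so if you instead remove a \emph{left}-attached region the carrier sets of the two excursion PPPs no longer coincide (the $D$-PPP is launched from $\mcalB_l\cup\bigcup_j\mcalB_j$, the $D'$-PPP from a left arc partly lying on $\partial D''$), your ``crucial observation'' breaks down, and Lemma~\ref{lem:excursionproba} no longer applies since $D\setminus D'$ is not at positive distance from the carrier set. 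The paper avoids the issue entirely by verifying the restriction formula only for hulls $K$ attached to $\mcalB_r$ (where the carrier sets do agree) and then arguing that this already determines $\nu_D$ as a law on the left fillings of its samples, hence on the curves themselves since each curve is the right boundary of its filling. You should either adopt that identification argument or restructure the induction so that only right-attached perturbations are ever needed; as written, the step for the all-left configuration is missing and cannot be supplied by the same coupling.
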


As a minor check we note that $\pi h_4/4=\pi/16 = \lam^2/2$ and hence the expression above agrees with Equation~\eqref{eq:defz4} when $\kappa=4$.

\subsubsection{Proof of Theorem~\ref{thm:noncrossingslekappa}}

The proof of the theorem will again just check the conditions from Lawler's characterisation of the $\sle$ measures. Among those, it is the restriction property that needs checking. To do this, it is in fact sufficient to show that
\begin{equation}\label{eq:rnderivktocompute}
\frac{\di \nu_{D \setminus K}(x,y)}{\di \nu_D(x,y)}(\eta) = \ind{\eta \subset D} \exp(\frac{\cc}{2} m_D(\eta, K))
\end{equation}
for $K \subset \overline{D}$ a hull attached to $\mcalB_r$, i.e. such that $K = \overline{K \cap D}$, $\overline{K} \cap \mcalB_l=\emptyset$, $D \setminus K$  and $K \cup \partial \D$ connected.
Indeed, this would characterise $\nu_D$ when seen as a law on connected hulls containing $(yx)$ (the left filling of its samples): since the $\slek$ measures should have the same Radon-Nikodym derivative, they must correspond to the same law on connected hulls containing $(yx)$, and the fact that they are simple curve---i.e. they coincide with their right boundary---is enough to assert that $\nu_D$ and $\slek$ also coincide as measures on simple curves from $x$ to $y$.

\begin{proof}
    Let $D':= D \setminus K$ be as above. Up to relabeling the boundary components $(\mcalB_j)_{1 \leq j \leq n}$ of $D$, we may assume that $(\mcalB_j)_{1 \leq j \leq k}$ are the inner boundary components of $D'$. We have to verify that $\nu_D$ satisfies Equation~\eqref{eq:rnderivktocompute}, i.e. that

    \begin{equation}\label{eq:rnderivktocompute2}
        \frac{\di \nu_{D'}}{\di \nu_{D}}(\eta) = \sum_{\substack{\bfb = (\bfb_k,1, \dots, 1) \\ \bfb_k \in \inset{\pm 1}^{n-k}}} \ind{\eta \in \mcalA_\bfb} \frac{\di \mbbP_{D'}^{\bfb_k}}{\di \mbbP_{D}^{\bfb}}(\eta) \frac{Z_{D',\bfb_k}}{Z_{D,\bfb}}
        = \sum_{\substack{\bfb = (\bfb_k,1, \dots, 1) \\ \bfb_k \in \inset{\pm 1}^{n-k}}} \ind{\eta \in \mcalA_{\bfb}, \eta \subset D'} \exp(\frac{\cc}{2}m_{D}(\eta, D \setminus D')).
    \end{equation}
    
    Given that on $\ind{\eta \in \mcalA_\bfb}$, $\mbbP_D^\bfb$ (resp. $\mbbP_{D'}^{\bfb_k}$) is a deterministic function of a realisation of $\cle_{\kappa}^D \otimes R_{h}^{D,\mcalB_\bfb^-}$ (resp. of $\cle_{\kappa}^{D'} \otimes R_{h}^{D',\mcalB_\bfb^-}$), it suffices to compute\footnote{We are abusing notation: the equation should be understood as
    \[
        \frac{ \di \cle_{\kappa}^{D'} \otimes R_{h}^{D',\mcalB_{\bfb}^-}}{\di \cle_{\kappa}^D \otimes R_{h}^{D,\mcalB_\bfb^-}}(F(\Gamma,K)) = \frac{ \di \cle_{\kappa}^{D'}}{\di \cle_{\kappa}^D}(F(\cdot,K))\restr{\cdot = \Gamma} \frac{ \di R_{h}^{D',\mcalB_{\bfb}^-}}{\di R_{h}^{D,\mcalB_\bfb^-}}(F(\Gamma,\cdot))\restr{\cdot = K}
    \]
    }
    \[
        \frac{\di \cle_{\kappa}^{D'} \otimes R_{h}^{D',\mcalB_\bfb^-}}{\di \cle_{\kappa}^{D} \otimes R_{h}^{D,\mcalB_{\bfb}^-}}(\eta) = \frac{\di \cle_{\kappa}^{D'}}{\di \cle_{\kappa}^{D}}(\eta) \frac{\di R_{h}^{D',\mcalB_\bfb^-}}{\di R_{h}^{D,\mcalB_{\bfb}^-}}(\eta).
    \]
    On the one hand, the sample of the Brownian loop soup in $D$ of intensity $\cc/2$ from which $\cle_\kappa^D$ is constructed consists of loops in $D'$ and loops intersecting $D'$. The former corresponds to a sample of a Brownian loop soup in $D'$ and the outermost boundaries of its clusters yield a realisation of $\cle_\kappa^{D'}$. One can therefore construct a curve $\tilde \eta'$ from $\cle_\kappa^{D'}$, and then explore the boundary of the loops in the remainder of the loop soup in $D$ to obtain $\tilde \eta$. Both curves are then such that either $\tilde \eta' = \tilde \eta$, or $\tilde \eta \not \subset D'$: and the former happens if and only if the loop soup does not contain loops intersecting both $\tilde\eta$ and $D \setminus D'$. Since the number of these loops is a Poisson random variable with parameter $\cc \cdot m_D(\tilde \eta, D \setminus D')/2$, we obtain that
    \[
        \frac{\di \cle_{\kappa}^{D'}}{\di \cle_{\kappa}^{D}}(\eta) = \Pb{\not \exists \ga \in \cle_{\kappa}^D: \ga \cap \eta \neq \emptyset, \ga \cap (D \setminus D') \neq \emptyset}^{-1} = \exp(\frac{\cc}{2} m_D(\eta, D \setminus D')).
    \]
    On the other hand, $R_{h}^{D,\mcalB_\bfb^-}$ can be obtained by first sampling $R_{h}^{D',\mcalB_{\bfb}^-}$, then an independent Poisson point process of Brownian excursions intersecting $D \setminus D'$ of intensity $h=h_\kappa$. Again here, the curve $\tilde \eta'$ obtained from $R_{h}^{D',\mcalB_\bfb^-}$ and the curve $\tilde \eta$ obtained from $R_{h}^{D,\mcalB_\bfb^-}$  are such that $\tilde \eta = \tilde \eta'$ or $\tilde \eta \not \subset D'$, the former happening if and only if the realisation of the PPP does not contain excursions intersecting $\mcalB_r'$ (note that these excursions necessarily hit $\eta$), i.e. if and only if no excursion in the realisation of $R_{h}^{D,\mcalB_\bfb^-}$ hit $D \setminus D'$. Once again we find that
    \[
        \frac{\di R_{h}^{D,\mcalB_\bfb^-}}{\di R_{h}^{D',\mcalB_{\bfb}^-}}(\eta) = \Pb{\not \exists \ga \in R_{h}^{D,\mcalB_\bfb^-}: \ga \cap D \setminus D' \neq \emptyset}^{-1} = \exp(\frac{\pi h}{4} (\norm{g_{D', \bfb_k}}_{\nabla(D'), \reg}^2 - \norm{g_{D, \bfb}}_{\nabla(D), \reg}^2)),
    \]
    using Lemma~\ref{lem:excursionproba}.
    The right-hand term of this equality cancels out with $Z_{D', \bfb_k}/Z_{D,\bfb}$ and concludes the proof.
\end{proof}

\appendix

\section{Existence of level-lines for compactified GFF on crossing admissible domains}\label{app:existencelevelline}
Here we explain in more details the existence of level-lines for the compactified GFF, in the crossing case: the argument is almost identical to the proof of Lemma 15 in \cite{aru_bounded-type_2019}. We replicate it here for the sake of completeness.
\begin{proposition}
    Let $(D,x,y)$ be a crossing admissible domain, $\bfb \in \Lambda^{n+1}$ and $\Phi_{D}^{g_\bfb} = \Phi_{D}^0 + g_\bfb$. There exists a local set coupling $(\Phi_{D}^{g_\bfb}, \eta, h_\eta)$ where $h_\eta$ is multivalued in $D\setminus \eta$ with constant branches equal to $\lam + 2n\lam$ on the event $\mcalA_\bfb$. $\eta$ is almost surely a simple curve from $x$ to $y$ in $\ann{p}$, measurable w.r.t. $\Phi_{D}^{g_\bfb}$, and its winding at $y$ is almost surely equal to $\sbin + \al$, with $\al$ the representative of $(\arg y - \arg x)/(2 \pi)$ in $[0,1)$.
\end{proposition}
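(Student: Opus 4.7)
The strategy is to lift the problem to the universal cover of $D$ (more precisely, a sufficiently large portion of it), where the multivalued harmonic function $g_\bfb$ becomes single-valued and the existing theory of generalized level lines for the GFF on non-crossing admissible domains (Lemma 15 and 16 in \cite{aru_bounded-type_2019}) can be applied directly.

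First, I would work in the strip $S_p = \{a+ib : a \in \R, 0 < b < p\}$ via the covering map $q : z \mapsto e^{iz}$, so that $U = q^{-1}(D) \subset S_p$ is a $2\pi$-periodic domain. By definition, $\tilde g_\bfb$ is a single-valued bounded harmonic function on $U$ whose boundary values are piecewise constant, equal to $\lam(1+2k)$ on the parts of $\partial U$ coming from $\partial \D$ between the lifts of $x$ at $2k\pi$ and $2(k+1)\pi$, and equal to $b_j$ on the lift of $\mcalB_j$ containing $\tilde z_j$, with the other lifts shifted by $\pm 2\lam$ according to the monodromy prescription. Since $\Phi_D^0$ can be canonically lifted to a zero-boundary GFF $\tilde \Phi$ on any bounded sub-domain $V \subset U$ (or defined directly as a distribution on $U$), the field $\tilde \Phi + \tilde g_\bfb$ is the candidate object on which to run the level-line construction.

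Next, truncate to any fundamental domain together with the relevant preimages of neighbourhoods of $y$: the point $\tilde y := w - 2\pi \sbin$ is the unique lift of $y$ lying in $[0,2\pi)$, and it sits on the boundary of a bounded non-crossing admissible region $V \subset U$ containing the lift $\tilde x = 0$. On $V$, $\tilde g_\bfb$ is bounded, takes the value $+\lam$ immediately to the right of $\tilde x$ and $-\lam$ immediately to its left, similarly around $\tilde y$, and on every other connected component of $\partial V \setminus \{\tilde x, \tilde y\}$ it is constant with $|\tilde g_\bfb| \geq \lam$ (this is exactly where the parametrisation $\bfb \in \Lambda^{n+1}$ is used). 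The existence and uniqueness result for generalized level lines on non-crossing admissible domains with piecewise constant boundary data satisfying $|g| \geq \lam$ then produces a unique simple curve $\tilde \eta$ from $\tilde x$ to $\tilde y$, coupled as a local set for $\tilde \Phi + \tilde g_\bfb$, with harmonic part given by the two-valued function $\mp \lam$ on either side of $\tilde \eta$ (in $V$) plus $\tilde g_\bfb$ on $\partial V$, and $\tilde \eta$ measurable with respect to the restriction of the field to $V$.

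Finally, I would project down: let $\eta := q(\tilde \eta)$. Since $\tilde \eta$ is a simple arc from $0$ to $\tilde y = w - 2\pi \sbin$ inside $V$, and since adjacent branches of $\tilde g_\bfb$ differ by $\pm 2\lam$ across the lifts of $\bin$ and the $\mcalB_j$, the projection $\eta$ is automatically an element of the topological class $\mcalA_\bfb$, and in particular its winding around $\bin$ equals $\sbin + \al$ by the choice of $\tilde y$. The local set property in $D$ follows from the local set property in $V$ together with the fact that $q$ restricted to $V$ is a homeomorphism onto its image, which also gives measurability of $\eta$ with respect to $\Phi_D^{g_\bfb}$ and the statement that the harmonic branches of $h_\eta$ take the prescribed values $\lam(1+2n)$. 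The main technical obstacle I anticipate is ensuring uniqueness and that the level line exits $V$ precisely at $\tilde y$ rather than along $\partial V \setminus \partial U$; this is handled exactly as in \cite{aru_bounded-type_2019} and \cite[Lemma 3.1]{powell_level_2017}, using that $|\tilde g_\bfb| \geq \lam$ on all other boundary components forces the curve to avoid them until reaching $\tilde y$, which combined with the monodromy of $g_\bfb$ pins down the winding to be $\sbin + \al$ almost surely.
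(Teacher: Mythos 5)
There is a genuine gap, and it sits at the heart of your strategy. You assert that $\Phi_D^0$ ``can be canonically lifted to a zero-boundary GFF $\tilde\Phi$ on any bounded sub-domain $V\subset U$''. This is false: the pullback of $\Phi_D^0$ by $q$ is a $2\pi$-periodic field whose covariance kernel is $G_D(q(z),q(w))$, which is not the Green's function of $V$ (nor of $U$). Concretely, if $V$ is a fundamental domain then $q(V)=D\setminus\delta$ for a crosscut $\delta$, and the Markov property gives $\Phi_D^0\restr{D\setminus\delta}=\Phi_{D\setminus\delta}^0+u_\xi$ with $u_\xi$ the harmonic extension of the \emph{random} trace $\xi$ of the field on $\delta$. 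So the boundary data of the lifted field on $\partial V\setminus\partial U$ is random and not piecewise constant, and the existence/uniqueness results you invoke (\cite[Lemma 15]{aru_bounded-type_2019}, \cite[Lemma 3.1]{powell_level_2017}), which require bounded piecewise constant data with $\abs{g}\geq\lam$ away from the endpoints, do not apply to $\tilde\Phi+\tilde g_\bfb$ on $V$. This is not the ``technical obstacle'' you defer to the references; it is the reason the paper does \emph{not} work on the cover. A second, related problem: the curve need not stay in any bounded region $V$ of the cover. The events $\mcalA_\bfb$ do not fix the homotopy class (cf.\ Figure~\ref{fig:inequivalent_configs}), and even with the winding around $\bin$ fixed, the lift of the curve can traverse arbitrarily many fundamental domains before terminating; ``truncating to a fundamental domain together with the relevant preimages of neighbourhoods of $y$'' therefore does not capture the whole curve. (There is also a slip in the endpoint: the lift of the curve starting at $\tilde x=0$ must terminate at $w$ itself, with $\Re w=2\pi(\sbin+\al)$, not at $w-2\pi\sbin\in[0,2\pi)$; with your choice the winding would be $\al$, not $\sbin+\al$.)

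The paper's proof avoids the cover entirely for the construction. It defines $\eta$ stopped at the exit time of each simply connected test domain $D'\subset D$ as the level line of the GFF that $\Phi_D^{g_\bfb}$ induces on $D'$, uses local uniqueness and measurability of level lines in simply connected domains to check that these definitions are compatible as $D'$ varies, and thereby obtains a globally defined curve measurable w.r.t.\ $\Phi_D^{g_\bfb}$. The winding statement is then \emph{not} built in by construction but proved separately: by \cite[Lemma 4.2]{powell_level_2017}, the curve a.s.\ cannot terminate at any boundary point where the local boundary data is constant equal to $(2k+1)\lam$, and it cannot terminate at $y$ through a test domain whose winding from $x$ to $y$ differs from $\sbin+\al$, because there the data near $y$ is $((2k\mp1)\lam)$ with $k\neq 0$; a union bound over a countable family of dyadic test domains concludes. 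If you want to salvage a cover-based argument you would need a level-line theory for the periodic (non-GFF) lifted field, which is precisely what is not available off the shelf.
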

\begin{proof}
    By conformal invariance we can assume that $D$ is given by an annulus $\ann{p}$ minus a set $K$ corresponding to a finite union of closed disks whose boundaries are given by $\mcalB_j, 1 \leq j \leq n, n \geq 0$.
    Given $D' \subset D$ a test domain, one can define the level-line $\eta$ of $\Phi_D^{g_D}\coloneqq \Phi_D^0 + g_D$ up until its exit time of $D'$ as the level-line in $D'$ of a GFF $\Phi_{D'}^{g_D\restr{D'}} \coloneqq \Phi_{D'}^0 + g_D\restr{D'}$. Uniqueness and measurability of this level-line w.r.t the latter GFF, as proved in the seminal works \cite{dubedat_sle_2009, schramm_contour_2013}, imply that this construction is compatible between different test domains so that $\eta$ is well-defined and actually a measurable function of $\Phi_D^{g_D}$.
    Similarly, since all branches of $g_D$ are such that $g_D$ is constant on each connected component of $\partial D \setminus \inset{x,y}$ and equal to $(2k+1)\lambda$ for some $k \in \Z$, then almost surely $\eta$ doesn't hit $\partial D \setminus \inset{y}$: for any simply connected domain $D' \subset D$, any point $y'$ of $(\partial D \cap \partial D') \setminus \inset{x,y}$ admits a neighbourhood $J$ in $D'$ such that $g_\bfb \restr{J \cap \partial D'}$ is equal to $(2k+1)\lambda$ for some $k \in \Z$, so by \cite[Lemma 4.2]{powell_level_2017} the probability that the level-line of $\Phi_{D'}^{g_D\restr{D'}}$ converges to a point in $J \cap \partial D'$ before exiting $D'$ is zero. On the other hand, if $\partial D' \ni y$, then there is a neighbourhood $J$ of $y$ in $D'$ such that $g_D \restr{J \cap \partial D'}$ is piecewise constant equal to $(2k-1)\lam$ (resp. $(2k+1)\lam$) on the part of $J \cap \partial D'$ in the clockwise direction (resp. anticlockwise) from $y$, for some $k \in \Z$. This $k$ is equal to zero if and only if the winding of $D'$ from $x$ to $y$ is given by\footnote{In the sense that any curve $\ga$ from $x$ to $y$ in $D'$ has winding given by $\sbin + \al$.} $\sbin + \al$, so if it is not the case by \cite[Lemma 4.2]{powell_level_2017} again the probability that the level-line of $\Phi_{D'}^{g_D\restr{D'}}$ converges to $y$ before exiting $D'$ is zero. We conclude by the construction of the level-line in $D$ and the union bound (for instance, considering all $D'$ corresponding the interior of finite unions of dyadic squares such that $D'$ is simply connected and contains $x$) that $\eta$ almost surely ends at $y$ with winding given by $\sbin + \al$.
    %
\end{proof}
In \cite{izyurov_hadamards_2013}, this result is established in the specific case $D=\ann{p}$, and it is further shown that $\eta$ is an annulus $\sle_4$ in $\ann{p}$, i.e. a Loewner chain for the annulus Loewner equation \cite{zhan_stochastic_2004} with driving function corresponding to a semimartingale with an explicit drift.

\section{Proof of Proposition~\ref{prop:detloopsoup}}\label{app:dubgeneral}

\noindent We remind the proposition:

\begin{myprop}{\ref{prop:detloopsoup}}
    For $D$ a simply connected proper domain of $\C$ and $\mcalB_1, \mcalB_2$ disjoint simple crosscuts such that $\dist(\mcalB_1,\mcalB_2)>0$ and $\mcalB_1$ is smooth, the following identity holds:
    \[
        \exp(-m_D(\mcalB_1,\mcalB_2)) = \det_F(I_{\mcalB_1}-(G_{D \setminus \mcalB_2})_{\mcalB_1}^{\mcalB_1} (H_{\partial (D \setminus \mcalB_1)}-H_{\partial (D \setminus (\mcalB_1 \cup \mcalB_2))})_{\mcalB_1}^{\mcalB_1}).
    \]
\end{myprop}

\begin{proof}
The identity holds true when $\mcalB_2$ is smooth and we have to extend it when $\mcalB_2$ is merely continuous. Since smooth crosscuts are dense in the space of continuous crosscuts with the Hausdorff topology, it is enough to argue continuity in $\mcalB_2$ of both sides of the identity. More precisely, fix two distinct endpoints $x,y$ on $\partial D$ and consider $\mcalX$ the set of smooth crosscuts $\mcalB_2$ having these two endpoints and disjoint from $\mcalB_1$, equipped with the Hausdorff metric. On the one hand, the map $\mcalB_2 \mapsto m_D(\mcalB_1, \mcalB_2)$ defined on $\mcalX$ is continuous (for instance by dominated convergence). On the other hand, since $A \mapsto \det_{\mathrm{F}}(I-A)$ is continuous over trace-class operators (with trace-class topology, \cite[Theorem 3.4]{simonTraceIdealsTheir2010}) the result follows provided we can prove the following:

\begin{claim}
    The map $\mcalB_2 \mapsto (G_{D \setminus \mcalB_2})_{\mcalB_1}^{\mcalB_1} (H_{\partial (D \setminus \mcalB_1)}-H_{\partial (D \setminus (\mcalB_1 \cup \mcalB_2))})_{\mcalB_1}^{\mcalB_1}$ is continuous from $\mcalX$ to the set of trace-class operators on $L^2(\mcalB_1)$.
\end{claim}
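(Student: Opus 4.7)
The approach is to factor $A_{\mcalB_2} := (G_{D \setminus \mcalB_2})_{\mcalB_1}^{\mcalB_1} (H_{\partial (D \setminus \mcalB_1)}-H_{\partial (D \setminus (\mcalB_1 \cup \mcalB_2))})_{\mcalB_1}^{\mcalB_1}$ as a product $A_{\mcalB_2} = B_{\mcalB_2} C_{\mcalB_2}$ of two Hilbert--Schmidt operators on $L^2(\mcalB_1)$ and to apply the classical estimate $\|BC\|_1 \le \|B\|_2 \|C\|_2$ relating the trace norm and Hilbert--Schmidt norm. The triangle inequality
\[
\|B_{\mcalB_2^n} C_{\mcalB_2^n} - B_{\mcalB_2^0} C_{\mcalB_2^0}\|_1 \le \|B_{\mcalB_2^n} - B_{\mcalB_2^0}\|_2 \, \|C_{\mcalB_2^n}\|_2 + \|B_{\mcalB_2^0}\|_2 \, \|C_{\mcalB_2^n} - C_{\mcalB_2^0}\|_2
\]
then reduces the task to proving HS-continuity of each of the maps $\mcalB_2 \mapsto B_{\mcalB_2}$ and $\mcalB_2 \mapsto C_{\mcalB_2}$. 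By working inside a fixed Hausdorff-neighbourhood of a base $\mcalB_2^0$, I can assume $\dist(\mcalB_1, \mcalB_2) \ge \eta$ for some $\eta > 0$ uniformly in the family.

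For $B_{\mcalB_2}$, the kernel $G_{D \setminus \mcalB_2}(x, w)$ is dominated pointwise by $G_D(x, w)$, which belongs to $L^2(\mcalB_1 \times \mcalB_1)$ thanks to the logarithmic singularity of the Green's function on the diagonal together with the smoothness of $\mcalB_1$. Pointwise convergence $G_{D \setminus \mcalB_2^n}(x, w) \to G_{D \setminus \mcalB_2^0}(x, w)$ off the diagonal follows from the probabilistic representation and Hausdorff convergence of the obstacles (a.s. a Brownian bridge from $x$ to $w$ hits $\mcalB_2^0$ iff it hits $\mcalB_2^n$ for all large $n$). Dominated convergence in $L^2(\mcalB_1 \times \mcalB_1)$ then yields HS-continuity of this factor.

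For $C_{\mcalB_2}$, I would decompose the Brownian excursions from $w$ to $y$ in $D \setminus \mcalB_1$ that hit $\mcalB_2$ at their first visit to $\mcalB_2$ to obtain the representation
\[
C_{\mcalB_2}(w, y) = \int_{\mcalB_2} H_{\partial(D \setminus (\mcalB_1 \cup \mcalB_2))}(w, z) \, H_{D \setminus \mcalB_1}(z, y) \ld{z}, \qquad w, y \in \mcalB_1,
\]
where the first factor is the boundary Poisson kernel evaluated at points on distinct boundary components. In this form both factors are uniformly bounded on their domain of integration: $H_{D \setminus \mcalB_1}(z, y)$ by smoothness of $\mcalB_1$ and $\dist(z, \mcalB_1) \ge \eta$, and $H_{\partial(D \setminus (\mcalB_1 \cup \mcalB_2))}(w, z)$ since $w$ and $z$ lie on disjoint smooth arcs at distance $\ge \eta$ (so no diagonal singularity appears). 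Integrating over the bounded-length curve $\mcalB_2$ gives a uniform bound on $C_{\mcalB_2}$ depending only on $\eta$ and the geometry, and pointwise convergence of every ingredient under Hausdorff convergence again yields HS-continuity by dominated convergence on the compact product $\mcalB_1 \times \mcalB_1$.

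The main obstacle is precisely the uniform bound on $C_{\mcalB_2}$: the naive estimate $C_{\mcalB_2}(w,y) \le H_{\partial(D \setminus \mcalB_1)}(w,y)$ is useless since the latter has a $|w-y|^{-2}$ singularity on the diagonal, not even locally integrable along $\mcalB_1$. One must genuinely exploit the cancellation of the singular parts of the two boundary Poisson kernels in the difference, and this is exactly where smoothness of $\mcalB_1$ enters. A minor technicality remaining is the boundedness of $H_{D \setminus \mcalB_1}(z, y)$ when $y$ approaches one of the two endpoints of the crosscut $\mcalB_1$ on $\partial D$, which is handled by a local geometric estimate at those (transverse, smooth) intersection points.
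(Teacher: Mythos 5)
Your proposal is correct in substance but follows a genuinely different route from the paper. The paper works directly with the composite operator: it notes that the kernel $A_{\mcalB_2}(x,z)$ --- the density of Brownian paths from $x\in\mcalB_1$ that hit $\mcalB_2$ and then return to $\mcalB_1$ at $z$ --- is smooth, hence trace class with $\Tr(A_{\mcalB_2})=\int_{\mcalB_1}A_{\mcalB_2}(x,x)\ld{x}$, and then dominates $A_{\mcalB_2^n}(x,z)$ by the analogous density for a fixed $\eps$-neighbourhood of the limiting crosscut to conclude by dominated convergence (its displayed conclusion is convergence of the traces). Your factorisation $A=BC$ into Hilbert--Schmidt operators together with $\norm{BC}_1\le\norm{B}_2\norm{C}_2$ instead reduces everything to $L^2(\mcalB_1\times\mcalB_1)$-convergence of the two kernels, which directly controls $\norm{A_{\mcalB_2^n}-A_{\mcalB_2^0}}_1$, i.e. the trace-norm topology actually needed for continuity of $\det_F$; this is a perfectly legitimate and arguably more robust alternative. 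Your treatment of the factor $B$ (domination of $G_{D\setminus\mcalB_2}$ by $G_D$, square-integrable on $\mcalB_1\times\mcalB_1$ since the diagonal singularity is only logarithmic, plus a.e. pointwise convergence from the probabilistic representation) is exactly in the spirit of the paper's domination step and is fine.

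The one soft spot is the uniform bound on $C_{\mcalB_2}$. The first-hit decomposition $C_{\mcalB_2}(w,y)=\int_{\mcalB_2}H_{\partial(D\setminus(\mcalB_1\cup\mcalB_2))}(w,z)\,H_{D\setminus\mcalB_1}(z,y)\ld{z}$ is correct, but ``integrating over the bounded-length curve $\mcalB_2$'' and the claimed uniform pointwise bound on $H_{\partial(D\setminus(\mcalB_1\cup\mcalB_2))}(w,z)$ do not follow from Hausdorff proximity alone: the curves $\mcalB_2^n$ in a Hausdorff neighbourhood may have unbounded arclength and uncontrolled local geometry, and the density of the hitting measure of $\mcalB_2^n$ with respect to its own arclength depends on precisely that geometry. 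The fix is to avoid parametrising $\mcalB_2$ at all: $C_{\mcalB_2}(w,y)$ is the mass of excursions from $w$ to $y$ in $D\setminus\mcalB_1$ that hit $\mcalB_2$, so it is dominated by the mass of excursions hitting the closed $\eps$-neighbourhood of $\mcalB_2^0$, namely by the fixed kernel $(H_{\partial(D\setminus\mcalB_1)}-H_{\partial(D\setminus(\mcalB_1\cup\overline{N_\eps(\mcalB_2^0)}))})(w,y)$, which lies in $L^2(\mcalB_1\times\mcalB_1)$ (bounded away from the endpoints of $\mcalB_1$, with the same mild endpoint analysis you already flag). Pointwise a.e. convergence of $C_{\mcalB_2^n}(w,y)$ then follows from the probabilistic representation as for $B$ (up to a null set of excursions, a path hits $\mcalB_2^0$ iff it hits $\mcalB_2^n$ for all large $n$), rather than from convergence ``ingredient by ingredient'' over a moving domain of integration. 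With this adjustment your argument is complete.
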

\noindent Indeed, consider a sequence of crosscuts $(\eta_k)_{k \geq 1} \subset \mcalX$ converging to some $\eta_\infty \in \mcalX$. Denote
    \[
        A_k := (G_{D \setminus \eta_k})_{\mcalB_1}^{\mcalB_1} (H_{\partial (D \setminus \mcalB_1)}-H_{\partial (D \setminus (\mcalB_1 \cup \eta_k))})_{\mcalB_1}^{\mcalB_1}  \quad\quad \text{ for } k \in \N \cup \inset{\infty}.
    \]
    These operators have kernel
    \[
        (A_k f)(x) = \int_{\mcalB_1} \underbrace{\int_{\mcalB_1}(G_{D \setminus \eta_k})(x,y) (H_{\partial (D \setminus \mcalB_1)}-H_{\partial (D \setminus (\mcalB_1 \cup \eta_k))})(y,z) \ldy}_{=: A_k(x,z)} f(z) \ld{z},
    \]
    which we recall corresponds to the density at $z$ of Brownian paths in $D$ started at $x$ that hit the curve $\eta_k$, then $\mcalB_1$. Since these kernels are smooth, by classical theory of integral operators they are trace class with trace equal to the integral of their kernel on the diagonal (see for instance \cite[Chapter 30.5]{lax2014functional}). Since $A_k(x,z) \xrightarrow[k \to \infty]{} A(x,z)$ pointwise and that for $\eps >0$ and $K$ large enough so that all $\eta_k, k \geq K$ lie in an $\eps$-neighborhood of $\eta$, $A_k(x,z)$ is bounded by the density at $z$ of Brownian paths in $D$ started at $x$ that hit the $\eps$-neighborhood of $\eta$, by dominated convergence it holds that $\Tr(A_k) \xrightarrow[k \to \infty]{} \Tr(A)$.
    
    \end{proof}

\section{Proof of Lemma~\ref{lem:rnderivativedisintegrationlemma}}\label{app:prooflemma}

\noindent We remind the lemma:

\begin{mylem}{\ref{lem:rnderivativedisintegrationlemma}}
    Let $X, Y_1,Y_2$ be random variables on the same probability space, taking values in a Polish space $E$, with associated distributions $\mbbP_X, \mbbP_{Y_1}, \mbbP_{Y_2}$ respectively. Denote also by $\mbbP_{Y_1 | Y_2 = y_2}$ the conditional law of $Y_1$ on ${Y_2 = y_2}$ (similarly for $\mbbP_{Y_2 | Y_1 = x}$) and assume that $\mbbP_X \ll \mbbP_{Y_1 | Y_2 = y_2}$, for $\mbbP_{Y_2}$-almost every $y_2 \in E$.
    Then the following equality between Radon-Nikodym derivatives holds $\mbbP_{X}$-a.s.
    \[
        \frac{\di \mbbP_X}{\di \mbbP_{Y_1}} (x) = \Eb{Y_2 | Y_1 = x}{\frac{\di \mbbP_X}{\di \mbbP_{Y_1 | Y_2 = y}}(x) \bigg|_{y=Y_2}}.
    \]
\end{mylem}

\begin{proof}
Let $f \in C_b(E)$. Denoting $R_{Y_2,y}(x)= \di \mbbP_X/ \di \mbbP_{Y_1 | Y_2 = y} (x)$, we compute:
\[
    \int_E f(x) \di \mbbP_X(x) = \int_E \int_E f(x) \di \mbbP_X(x) \di \mbbP_{Y_2}(y)
    = \int_E \int_E f(x) R_{Y_2,y}(x) \di \mbbP_{Y_1 | Y_2 = y}(x) \di \mbbP_{Y_2}(y).
\]
Using Bayes' formula, we rewrite the last term as
\[
\int_E \int_E f(x) R_{Y_2,y}(x) \di \mbbP_{Y_1 | Y_2 = y}(x) \di \mbbP_{Y_2}(y)
= \int_E \int_E f(x) R_{Y_2,y}(x) \di \mbbP_{Y_2 | Y_1 = x}(y) \di \mbbP_{Y_1}(x).
\]
But on the other hand, denoting $R(x) = \di \mbbP_X/\di \mbbP_{Y_1} (x)$, we have that
\[
\int_E f(x) \di \mbbP_X(x) = \int_E f(x) R(x) \di \mbbP_{Y_1} (x),
\]
from which we deduce that
\[
R(x) = \int_E R_{Y_2,y}(x) \di \mbbP_{Y_2 | Y_1 = x}(y). \qedhere
\]
\end{proof}

\nocite{*}
\bibliography{bibliography.bib}
\bibliographystyle{alphaurl}

\end{document}